\documentclass[leqno,12pt]{amsart}
\usepackage{latexsym}
\usepackage{mathrsfs}
\usepackage{amsmath}
\usepackage{amssymb}
\usepackage[bookmarks]{hyperref}
\usepackage{pstricks,pst-plot}

\font\tenscr=rsfs10 
\font\sevenscr=rsfs7 
\font\fivescr=rsfs5 
\skewchar\tenscr='177 \skewchar\sevenscr='177 \skewchar\fivescr='177
\newfam\scrfam \textfont\scrfam=\tenscr \scriptfont\scrfam=\sevenscr
\scriptscriptfont\scrfam=\fivescr
\def\scr{\fam\scrfam}

\font\tenscr=rsfs10 
\font\sevenscr=rsfs7 
\font\fivescr=rsfs5 
\skewchar\tenscr='177 \skewchar\sevenscr='177 \skewchar\fivescr='177
\newfam\scrfam \textfont\scrfam=\tenscr \scriptfont\scrfam=\sevenscr
\scriptscriptfont\scrfam=\fivescr
\def\scr{\fam\scrfam}

\newtheorem{theorem}{Theorem}[section]

\newtheorem{corollary}[theorem]{Corollory}
\newtheorem{lemma}[theorem]{Lemma}
\newtheorem{question}[theorem]{Question}

\newtheorem{remark}[theorem]{Remark}

\newtheorem{definition} [theorem]{Definition}

\newcommand{\bthm}{\begin{theorem}}
\newcommand{\ethm}{\end{theorem}}
\newcommand{\blem}{\begin{lemma}}
\newcommand{\elem}{\end{lemma}}
\newcommand{\bcor}{\begin{corollary}}
\newcommand{\ecor}{\end{corollary}}
\newcommand{\bprop}{\begin{proposition}}
\newcommand{\eprop}{\end{proposition}}
\newcommand{\bdefn}{\begin{definition}}
\newcommand{\edefn}{\end{definition}}
\newcommand{\bpf}{\begin{proof}}
\newcommand{\epf}{\end{proof}}

\newcommand{\bi}{\begin{itemize}}
\newcommand{\ei}{\end{itemize}}

\newcommand{\bc}{\begin{cases}}
\newcommand{\ec}{\end{cases}}

\newcommand{\ba}{\begin{array}}
\newcommand{\ea}{\end{array}}

\newcommand{\be}{\begin{equation}}
\newcommand{\ee}{\end{equation}}

\newcommand{\bea}{\begin{eqnarray}}
\newcommand{\eea}{\end{eqnarray}}

\newcommand{\beaa}{\begin{eqnarray*}}
\newcommand{\eeaa}{\end{eqnarray*}}

\newcommand{\beastar}{\begin{eqnarray*}}
\newcommand{\eeastar}{\end{eqnarray*}}

\begin{document}

\def\tA{\tilde A}
\def\tX{\tilde X}
\def\tf{\tilde f}
\def\tpi{\tilde \pi}
\def\th{\tilde h}
\def\ta{\tilde \alpha}

\def \vep {\varepsilon}

\def \cd {, \ldots ,}
\def \lf{\| f \|}
\def \bs{\setminus}
\def \ep{\varepsilon}
\def \sig{\Sigma}
\def \si {\sigma}
\def \gam {\gamma}
\def \cinf {C^\infty}
\def \cid {C^\infty (\partial D)}

\def\h#1{\widehat {#1}}
\def\hh#1{\widehat {#1} \bs  {#1}}
\def\hk#1#2{{\widehat {#2}}^{#1}}
\def\hhk#1#2{{\widehat {#2}}^{#1} \bs {#2}}

\def\hr#1{h_r({#1})}
\def\hhr#1{h_r({#1}) \bs {#1}}
\def\hrk#1#2{h_r^{#1}({#2})}
\def\hhrk#1#2{h_r^{#1}({#2}) \bs {#2}}

\def\kh#1#2{{}^{#1}{\widehat {#2}}}
\def\khr#1#2{{}^{#1}h_r({#2})}
\def\kshr#1#2{{}_{#1}h_r({#2})}

\def\<{\langle}
\def\>{\rangle}

\def\spshell{\{ z\in \CN: 1<\|z\| < \rho\}}

\def \C {\mathbb C}
\def \CN {\mathbb C^N}
\def\R {\mathbb R}

\def \Z {\mathbb Z}

\def \bbr {\mathbb R}
\def \bbrs {\mathbb R^2}
\def \bbrn {\mathbb R^n}

\def \cc {\mathscr C}
\def \kk {\mathscr K}
\def \mm {\mathfrak M}

\def \od {\overline D}
\def \ol {\overline L}
\def \oj {\overline J}

\def\sC{{\scr C}}
\def\sF{{\scr F}}
\def\sG{{\scr G}}

\def \cn {c_1, \ldots , c_n}
\def \zx { z \in \widehat X }
\def \zxx {\{ z \in \widehat X \} }
\def \siv {\sum^\infty _{j=1}}
\def \sivv {\sum^\infty _{j=n+1}}
\def \smv {\sum^m_{j=n+1}}
\def \snv {\sum^n_{j=j_0+1}}
\def \bjkz {B_{jk} (z_0)}
\def \epp {\varepsilon^2_n/2}
\def \epn {\varepsilon_{n+1}}
\def \nn { {n+1} }
\def \nnn {{2N}}
\def \ff {F_{z_{0}}}
\def \fk {f_{k_{1}}}
\def \fkv {f_{k_{v}}}
\def \tf {\tilde{f}}

\def \ma {\mathfrak{M}_A}  
\def \mb {\mathfrak{M}_B}
\def \muc {\mathfrak{M}_{\mathscr U}}
\def \uc {\mathscr U}
\def \xy {(x,y)}
\def \fg {f \otimes g}
\def \vp {\varphi}
\def \pl {\partial L}
\def \ad {A(D)}
\def \aid {A^\infty (D)}
\def \ot {\otimes}
\def \rbl {R_b (L)}
\def\Int {{\rm Int}}

\def\rowonly#1#2{#1_1,\ldots,#1_#2}
\def\row#1#2{(#1,\ldots,#1_#2)}
\def\diam{\mathop{\rm diam}\nolimits}

\def\pB{\partial B}
\def\oB{\overline B}

\def\endhat{\widehat{\phantom j}}
\def\endhatk{\widehat{\phantom j}{\phantom |}^k}

\def\pji{p_j^{-1}}
\def\oh{\overline H}

\subjclass[2000]{32E20, 46J10, 46J15}
\title[Polynomial hulls that contain no analytic discs]{Spaces with polynomial hulls\\ that contain no analytic discs}
\author{Alexander J. Izzo}
\address{Department of Mathematics and Statistics, Bowling Green State University, Bowling Green, OH 43403}
\email{aizzo@bgsu.edu}

\begin{abstract}
\vskip 24pt
Extensions of the notions of polynomial and \hbox{rational} hull are introduced.  Using these notions, a generalization of a result of Duval and Levenberg on polynomial hulls containing no analytic discs is presented.  As a consequence it is shown that there exists a Cantor set in $\C^3$ with a nontrivial polynomial hull that contains no analytic discs.  Using this Cantor set, it is shown that there exist arcs and simple closed curves in $\C^4$ with nontrivial polynomial hulls that contain no analytic discs.  This answers a question raised by Bercovici in 2014 and can be regarded as a partial answer to a question raised by Wermer over 60 years ago.  More generally, it is shown that
every uncountable, compact subspace of a Euclidean space can be embedded as a subspace $X$ of $\CN$, for some $N$, in such a way as to have a nontrivial polynomial hull that contains no analytic discs.
In the case when the topological dimension of the space is at most one, $X$ can be chosen so as to have the stronger property that $P(X)$ has a dense set of invertible elements. 
\end{abstract}

\maketitle

\vskip -3.52 true in
\centerline{\footnotesize\it Dedicated to to the memory of Donald Sarason} 
\vskip 3.52 truein

 \section{Introduction}
It was once conjectured that whenever the polynomial hull $\h X$ of a compact set $X$ in $\CN$ is strictly larger than $X$, the complementary set $\hh X$ must contain an analytic disc. This conjecture was disproved by Gabriel Stolzenberg \cite{Stol1}.  However, when $X$ is a smooth one-dimensional manifold, the set $\hh X$, if nonempty, is a one-dimensional analytic variety as was also shown by Stolzenberg \cite{Stol2} (strengthening earlier results of several mathematicians).
In contrast, recent work of the author, H\aa kan Samuelsson Kalm, and Erlend Forn{\ae}ss Wold \cite{ISW} and of the author and Lee Stout \cite{IS} shows that every smooth manifold of dimension strictly greater than one smoothly embeds in some $\CN$ as a subspace $X$ such that $\hh X$ is nonempty but contains no analytic discs.

In response to a talk on the above results given by the author, Hari Bercovici raised the question of whether a {\it nonsmooth} one-dimensional manifold can have polynomial hull containing no analytic discs. This question was the motivation for the present paper and will be answered affirmatively.  In fact, it will be shown, in Theorem~\ref{maintheorem} below, that every uncountable, compact subspace of a Euclidean space can be embedded in some $\CN$ so as to have polynomial hull containing no analytic discs. It is pleasure to thank Bercovici for his question which had a very stimulating effect on the author's research.

A similar question was in fact raised by John Wermer \cite{Wermer1954} more than 60 years ago.  Wermer observed that for $\phi_1$ and $\phi_2$ continuous complex-valued functions separating points on the unit circle $\partial D$, a necessary condition for the polynomials in the complex coordinate functions to be uniformly dense in the continuous functions on the simple closed curve $J=\bigl\{\bigl(\phi_1(z),\phi_2(z)\bigr):z\in \partial D\bigr\}$ is that $J$ not bound a piece of an analytic variety, and he proved that the condition is also sufficient when either $\phi_1(z)=z$ or $\phi_1(z)=z^2$.  He conjectured that the condition is always sufficient.  While this conjecture is still open,  Theorem~\ref{arc} below establishes that the conjecture becomes false if the pair of continuous functions $(\phi_1,\phi_2)$ is replaced by a quadruplet $(\phi_1\cd \phi_4)$.

In stating our results, it will be convenient to say that a hull $X_H$ of a set $X \subset \CN$ is {\it nontrivial} if the set $X_H\setminus X$ is nonempty.

\begin {theorem} \label{maintheorem} 
Let $K$ be an arbitrary uncountable, compact subspace of $\R^n$. Then there exists a subspace $X$ of $\C^{n+4}$ homeomorphic to $K$ such that the polynomial hull $\h X$ of $X$ is nontrivial but contains no analytic discs.
\end {theorem}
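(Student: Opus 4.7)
The plan is to reduce the general case to the Cantor set construction already carried out in the paper, namely the existence of a Cantor set $C\subset\C^3$ whose polynomial hull $\h{C}$ is nontrivial and contains no analytic discs. Three of the four extra complex coordinates will accommodate a copy of $C$ sitting inside the image of $K$; the fourth is a tuning parameter whose role is to prevent the extension procedure from introducing analytic discs into the hull.

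Since $K$ is an uncountable compact metric space, the Cantor--Bendixson theorem produces a Cantor subset $C_0\subset K$. Fix a homeomorphism $h_0:C_0\to C$ and regard $K$ as already lying in $\C^n$ via the inclusion $\R^n\subset\C^n$. Extend $h_0$ to a continuous map $H:K\to\C^3$ by Tietze's theorem, let $g:K\to\C$ be a further continuous function to be chosen, and define
\[
\Phi(k)=\bigl(k,H(k),g(k)\bigr)\in\C^{n+4},\qquad X=\Phi(K).
\]
Since the first $n$ coordinates already embed $K$, the map $\Phi$ is automatically a topological embedding, so $X\cong K$.

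Nontriviality of $\h{X}$ ought to be the easier half: $\Phi(C_0)$ is a homeomorphic copy of $C$ embedded inside $X$, and any point $p\in\h{C}\setminus C$ should lift, via a projection-and-lift argument using the extended notions of hull introduced earlier in the paper, to a point of $\h{X}\setminus X$.

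The main obstacle is arranging that $\h{X}$ contain no analytic discs. A bare Tietze extension $H$ is too crude: the hull of $H(K)$ could easily acquire analytic discs from the ``bulk'' $K\setminus C_0$, even though the hull of $C=H(C_0)$ has none. The fourth coordinate $g$ (possibly together with a simultaneous perturbation of $H$ off $C_0$) is there to suppress these. I would construct $g$ as the uniform limit of a sequence of continuous functions, selected by a Baire-category or inductive-approximation argument analogous to the one used in the Cantor set case, so that the generalization of the Duval--Levenberg theorem proved earlier in the paper applies to $X$. Concretely, the goal is to force the projection of any analytic disc in $\h{X}$ to the last three coordinates to land inside $\h{C}$, contradicting the discless property of $\h{C}$. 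Setting up this approximation---so that the resulting $g$ satisfies the hypotheses of the generalized Duval--Levenberg criterion while $\Phi|_{C_0}$ is left intact---is where the real work of the proof lies.
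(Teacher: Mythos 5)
Your reduction to the Cantor set of Theorem~\ref{Cantorset} is the right starting point, and it is also the paper's, but the embedding you propose cannot work: with $\Phi(k)=(k,H(k),g(k))$ the first $n$ coordinate functions $z_1,\ldots,z_n$ restrict to real-valued functions on $X=\Phi(K)$ that already separate the points of $X$ (since $k\mapsto\Phi(k)$ is determined by $k\in K\subset\R^n$). By the Stone--Weierstrass theorem $P(X)=C(X)$, so $\h X=X$ is \emph{trivial} no matter how $H$ and $g$ are chosen; equivalently, any representing measure for a point of $\h X$ is forced by the real functions $(z_j-w_j)^2\geq 0$ to concentrate on a single fiber of the projection to $\C^n$, i.e.\ on a single point of $X$. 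So the half you call ``the easier half'' is exactly where the construction breaks. The ``projection-and-lift'' of a point $p\in\h C\setminus C$ has nowhere to land: such a point would have to sit over some $w\in K$, and the fiber of $X$ over $w$ is a singleton.

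The paper's fix is to damp the Euclidean coordinates rather than use them raw: choose a continuous $\rho\geq 0$ on $K$ with zero set exactly the Cantor subset $J$, and embed by the $n+4$ functions $\tf_1,\tf_2,\tf_3,\rho,\rho x_1,\ldots,\rho x_n$, where $\tf_1,\tf_2,\tf_3$ extend the three generators of $P$ of the Cantor set. These still separate points of $K$ (so $X\cong K$), but they all vanish on $J$, so over $J$ the hull is governed solely by $\tf_1,\tf_2,\tf_3$ and one gets $\h X\setminus X=(\widehat{G(J)}\setminus G(J))\times\{0\}^{n+1}\neq\emptyset$ by \cite[Proposition~3.1(i)]{ISW}. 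This also makes your worry about the ``bulk'' $K\setminus C_0$ evaporate without any Baire-category tuning of a fourth function $g$: the algebra $A=\{f\in C(K):f|J\in B\}$ realized by these generators contains real-valued functions separating the points of $K\setminus J$, so no analytic disc can live over $K\setminus J$, and $\mm_B$ contains none by the choice of the Cantor set. In short: the fourth extra coordinate is $\rho$ itself (needed to separate $K\setminus J$ from $J$), not a generic perturbation, and no appeal to the Duval--Levenberg machinery is needed at this stage -- it was already used once and for all to produce the Cantor set in $\C^3$.
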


Since the compact subspaces of Euclidean spaces are precisely the compact metrizable spaces of finite topological dimension \cite[Theorem~V.2]{HW}, the above theorem says, in particular, that every uncountable, compact metrizable space of finite topological dimension embeds in some $\CN$ so as to have nontrivial polynomial hull containing no analytic discs.  More precisely we have the following corollary.  (By topological dimension, we mean the usual Lebesgue covering dimension.)

\begin {corollary} \label{maincorollary} 
If $K$ is an uncountable, compact metrizable space of topological dimension $m$, then there exists a subspace $X$ of $\,\C^{2m+5}$ homeomorphic to $K$ such that the polynomial hull $\h X$ of $X$ is nontrivial but contains no analytic discs.
\end {corollary}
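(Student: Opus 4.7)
The plan is to derive Corollary~\ref{maincorollary} as an immediate consequence of Theorem~\ref{maintheorem} together with the Menger--N\"obeling embedding theorem from classical dimension theory. This embedding theorem, which the authors have already invoked in the discussion just above the corollary (\cite[Theorem~V.2]{HW}), asserts that every compact metrizable space of topological dimension at most $m$ is homeomorphic to a compact subspace of $\R^{2m+1}$.

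Given the uncountable compact metrizable space $K$ of topological dimension $m$, I would first apply Menger--N\"obeling to fix a homeomorphic copy $K' \subset \R^{2m+1}$. Since $K$ is uncountable, so is $K'$, and hence $K'$ satisfies the hypotheses of Theorem~\ref{maintheorem} with $n = 2m+1$. Applying that theorem produces a subspace $X \subset \C^{(2m+1)+4} = \C^{2m+5}$ homeomorphic to $K'$, and hence to $K$, such that the polynomial hull $\h X$ is nontrivial but contains no analytic discs. This is precisely the conclusion sought.

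The main obstacle is not really an obstacle: the corollary is a straightforward bookkeeping consequence, with all of the substantive analysis already encapsulated in Theorem~\ref{maintheorem}. The only observation worth isolating is that the ambient dimension $2m+5$ arises as the sum of the Menger--N\"obeling embedding dimension $2m+1$ and the four extra coordinates demanded by the main theorem; any sharpening of the corollary would have to come either from improving Theorem~\ref{maintheorem} itself or from exhibiting, for special $K$, a more economical Euclidean embedding than the one given by Menger--N\"obeling.
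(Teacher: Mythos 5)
Your proposal is correct and is exactly the paper's own argument: embed $K$ in $\R^{2m+1}$ via the Menger--N\"obeling theorem \cite[Theorem~V.2]{HW} and then apply Theorem~\ref{maintheorem} with $n=2m+1$ to land in $\C^{2m+5}$. Nothing further is needed.
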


The key to proving Theorem~\ref{maintheorem} is to obtain a {\it Cantor set\/} whose polynomial hull is nontrivial but contains no analytic discs.  Once this is done, all other desired spaces can be handled by a devise used repeatedly in the author's joint work with Samuelsson Kalm, Wold, and Stout \cite{ISW, IS}. The first example of a Cantor set with nontrivial polynomial hull was constructed by Walter Rudin \cite{Rudin-Cantor} using a modification of an argument of Wermer  \cite{Wermer-Cantor} who produced the first example of an {\it arc\/} with nontrivial polynomial hull. 
We will prove the following.

\bthm \label{Cantorset} 
There exists a Cantor set in $\C^3$ whose polynomial hull is nontrivial but contains no analytic discs.
\ethm

An alternative construction of a Cantor set with nontrivial polynomial hull containing no analytic discs is given in the paper \cite{I-L} of the present author and Norman Levenberg.  In addition to being more direct than the construction in \cite{I-L}, the construction to be presented here has the advantage of being very flexible and making possible the introduction of various additional properties.  One of these concerns density of invertible elements.
Garth~Dales and Joel~Feinstein \cite{DalesF} constructed a compact set $X$ in $\C^2$ with nontrivial polynomial hull and such that the uniform algebra $P(X)$ has a dense set of invertible elements.  
Additional examples of this phenomenon were later given by the present author \cite{Izzo}.
As noted in \cite{DalesF} the condition that $P(X)$ has a dense set of invertible elements is strictly stronger than the condition that $\h X$ contains no analytic disc.  
Following Dales and Feinstein, 
we will say that a uniform algebra $A$ has dense invertibles if the invertible elements of $A$ are dense in $A$.
We will prove the following result which contains Theorem~\ref{Cantorset}

\bthm\label{Cantorset-dense-invert}
There exists a Cantor set $X$ in $\C^3$ with nontrivial polynomial hull such that $P(X)$ has dense invertibles.
\ethm

Using this Cantor set, we will show that 
every uncountable, compact metrizable space of topological dimension at most 1 can be embedded in some $\C^N$ so as to have nontrivial polynomial hull and a polynomial algebra with dense invertibles.

\begin {theorem} \label {one-d-gen} 
Let $K$ be an uncountable, compact subspace of $\R^n$ of topological dimension at most $1$. Then there exists a subspace $X$ of $\C^{n+4}$ homeomorphic to $K$ such that the polynomial hull $\h X$ of $X$ is nontrivial and the uniform algebra $P(X)$ has dense invertibles.
\end {theorem}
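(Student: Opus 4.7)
The plan is to combine the Cantor set $E \subset \C^3$ provided by Theorem~\ref{Cantorset-dense-invert} with an embedding of $K$ into $\C^{n+4}$ designed so that the Cantor structure is isolated on a coordinate slice and the rest of $K$ is folded into the remaining coordinates without destroying the dense-invertibles property. The topological dimension hypothesis will enter through a final constant-perturbation step.

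Since $K$ is uncountable and compact metrizable, it contains a Cantor subset $C_0$. I fix a homeomorphism $\phi : C_0 \to E$ and extend it, coordinate by coordinate via the Tietze extension theorem, to a continuous map $\Phi : K \to \C^3$. I also choose a continuous function $\tau : K \to [0, 1]$ whose zero set is exactly $C_0$ (e.g.\ a normalized distance to $C_0$), and embed $K$ into $\C^n \times \C^3 \times \C = \C^{n+4}$ by
\[
  \iota(k) := \bigl(\tau(k)\,k,\ \Phi(k),\ \tau(k)\bigr),
\]
regarding $k \in \R^n \subset \C^n$. This $\iota$ is injective (off $C_0$ the ratio of the first $n$ coordinates to the last recovers $k$; on $C_0$ the middle coordinates do), so $X := \iota(K)$ is homeomorphic to $K$ and $\iota(C_0) = \{0\} \times E \times \{0\}$. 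To see $\h X$ is nontrivial, I observe that for every polynomial $P$ on $\C^{n+4}$ and every $q \in \h E$, setting $\tilde P(w) := P(0, w, 0)$, one has $|P(0, q, 0)| = |\tilde P(q)| \leq \sup_E |\tilde P| = \sup_{\iota(C_0)} |P| \leq \sup_X |P|$, so $(0, q, 0) \in \h X$; any $q \in \h E \setminus E$ then yields a point of $\h X \setminus X$.

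For the density of invertibles, given a polynomial $P$ on $\C^{n+4}$ and $\varepsilon > 0$, I first use the dense invertibles of $P(E)$ to choose a polynomial $\tilde Q$ on $\C^3$ nowhere zero on $\h E$ and arbitrarily close to $\tilde P(w) := P(0, w, 0)$ on $E$. For a large integer $N$ I set
\[
  Q_0(z, w, t) := P(z, w, t) + \bigl(\tilde Q(w) - \tilde P(w)\bigr)(1 - t)^N.
\]
The factor $(1-t)^N$ equals $1$ on $\iota(C_0)$ and is uniformly small outside any fixed $\tau$-neighborhood of $C_0$, so by splitting $X$ accordingly one obtains $\|Q_0 - P\|_X < \varepsilon/2$, while $Q_0(0, w, 0) = \tilde Q(w)$ is nowhere zero on $\h E$. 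Finally, because $K$ has topological dimension at most $1$, the continuous image $Q_0 \circ \iota(K) \subset \C$ has topological dimension at most $1$ and therefore empty interior in $\C$; hence there exist $c \in \C$ with $|c| < \varepsilon/2$ and $-c$ both outside $Q_0 \circ \iota(K)$ and outside the compact set $\tilde Q(\h E)$ (which is bounded away from $0$). The polynomial $Q := Q_0 + c$ then differs from $P$ by at most $\varepsilon$ on $X$ and is nowhere zero on $X \cup (\{0\} \times \h E \times \{0\})$.

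The main obstacle is to show that $\h X = X \cup (\{0\} \times \h E \times \{0\})$, as this is what guarantees that the $Q$ constructed above is invertible in $P(X)$ rather than merely nonvanishing on a proper subset of the maximal ideal space. I would attempt this via a fibration argument: for a putative $z = (a, b, c) \in \h X$, test polynomials of the form $z_j - k_j^{(0)} t$ and Stone--Weierstrass applied to the real set $\{\tau(k)\,k : k \in K\} \subset \R^n$ should force $a = c k$ for some $k \in K$ with $\tau(k) = c$; then polynomials in $w$ alone should force $b = \Phi(k)$ when $c > 0$ and $b \in \h E$ when $c = 0$. Once this description of $\h X$ is established, $Q$ is nowhere zero on the maximal ideal space of $P(X)$ and hence invertible, and letting $\varepsilon \to 0$ yields the theorem.
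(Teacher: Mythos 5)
Your embedding is essentially the one the paper uses (its components are $\rho x_1,\ldots,\rho x_n,\tilde f_1,\tilde f_2,\tilde f_3,\rho$ in the paper's notation), and the nontriviality of $\h X$ is handled correctly. The genuine gap is in the step where the hypothesis $\dim K\leq 1$ is actually used. You claim that since $K$ has topological dimension at most $1$, the continuous image $Q_0(\iota(K))\subset\C$ has topological dimension at most $1$ and hence empty interior. Topological dimension is not preserved by continuous maps: a Peano curve sends $[0,1]$ onto a square, and nothing in your construction prevents $Q_0\circ\iota$ from having a two-dimensional image (the Tietze extensions $\Phi$, $\tau$ and the polynomial $P$ are essentially arbitrary, and their level sets need not be small). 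So there may be no constant $c$ of small modulus with $-c\notin Q_0(\iota(K))$, and the constant-perturbation step collapses. The correct dimension-theoretic input is Vasershtein's theorem that $C(L)$ has dense invertibles when $L$ is compact of dimension at most $1$; but the zero-free perturbation it produces is a genuinely non-constant continuous function, and to use it you must know that such perturbations (vanishing on the Cantor slice) still represent elements of $P(X)$. That is exactly why the paper first identifies $P(X)$ with $A=\{f\in C(K):f|J\in B\}$ via \cite[Lemma~3.8]{ISW} and then proves Lemma~\ref{dense-invert-lemma}: $K\setminus J$ is exhausted by compacta $K_1\subset K_2\subset\cdots$, Vasershtein is applied on each $K_m$, and the successive corrections $\tilde\alpha_m$ are sized so that later ones never reintroduce zeros on earlier $K_m$. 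A single constant cannot substitute for this iteration.

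Separately, the identification $\h X=X\cup(\{0\}\times\h E\times\{0\})$, which you rightly flag as the main obstacle and which your argument needs in order to conclude that a zero-free $Q$ is actually invertible in $P(X)$, is left as a sketch. The statement is true; the paper gets it from \cite[Proposition~3.1(i)]{ISW}, or equivalently from \cite[Lemma~3.8]{ISW} combined with Bear's theorem as in the proof of Theorem~\ref{generalspace}. The mechanism is that $\h X$ fibers over the polynomially convex compact set $\{(\tau(k)k,\tau(k)):k\in K\}\subset\R^{n+1}\subset\C^{n+1}$, the fiber over a point with $\tau\neq0$ being a singleton and the fiber over the origin being $\h E$; your ``test polynomials plus Stone--Weierstrass'' remark would have to be developed into precisely such a fibration lemma before the proof is complete.
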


\begin {corollary} \label {one-d-special} 
If $K$ is an uncountable, compact metrizable space of topological dimension at most $1$, then there exists a subspace $X$ of $\,\C^{7}$ homeomorphic to $K$ such that the polynomial hull $\h X$ of $X$ is nontrivial and the uniform algebra $P(X)$ has dense invertibles.
\end {corollary}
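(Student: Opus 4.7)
The plan is to derive Corollary~\ref{one-d-special} as an immediate consequence of Theorem~\ref{one-d-gen}, using a classical embedding theorem from dimension theory to reduce an abstract compact metrizable space to a concrete compact subset of Euclidean space.

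First, I would invoke the Menger--N\"obeling embedding theorem (see Hurewicz--Wallman \cite[Theorem~V.2]{HW}, the same source already cited for Corollary~\ref{maincorollary}): every compact metrizable space of topological dimension at most $m$ is homeomorphic to a subspace of $\R^{2m+1}$. Applied with $m=1$, this gives a topological embedding of $K$ into $\R^3$. Thus we may regard $K$ itself as an uncountable, compact subspace of $\R^n$ with $n = 3$.

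Next, I would apply Theorem~\ref{one-d-gen} to this realization of $K$ as a subspace of $\R^3$. The theorem produces a subspace $X$ of $\C^{n+4} = \C^{3+4} = \C^7$ that is homeomorphic to $K$, whose polynomial hull $\h X$ is nontrivial, and for which $P(X)$ has dense invertibles. This is exactly the conclusion required.

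There is essentially no obstacle here: all of the analytic work is carried out in Theorem~\ref{one-d-gen}, and the only ingredient added at this step is the dimension-theoretic embedding result, which is entirely standard. The same reduction pattern was used to deduce Corollary~\ref{maincorollary} from Theorem~\ref{maintheorem}, with the only difference being that here we exploit the sharper embedding dimension available when the topological dimension is at most one ($2\cdot 1 + 1 = 3$ rather than $2m+1$ in general).
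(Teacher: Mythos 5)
Your proposal is correct and is essentially identical to the paper's own proof: the paper likewise deduces Corollary~\ref{one-d-special} from Theorem~\ref{one-d-gen} by embedding $K$ into $\R^{2m+1}=\R^3$ via the Menger--N\"obeling theorem \cite[Theorem~V 2]{HW} and taking $n=3$, so that $X\subset\C^{n+4}=\C^7$. No gaps.
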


By an {\it arc} we shall mean a space homeomorphic to the closed unit interval and by a {\it simple closed curve}, a space homeomorphic to the unit circle.  Thus the arcs and simple closed curves are the compact, connected, one-dimensional manifolds.  It follows immediately from 
Theorem~\ref{maintheorem} that there is an arc in $\C^5$ with nontrivial polynomial hull containing no analytic discs and a simple closed curve in $\C^6$ with the same property.  However, by applying Theorem~\ref{Cantorset} more directly, we will obtain examples in $\C^4$.

\begin {theorem} \label {arc} 
There exists an arc $X$ in $\C^4$ whose polynomial hull is nontrivial but contains no analytic discs.  The same statement holds with \lq \lq arc\rq\rq\ replaced by \lq \lq simple closed curve\rq \rq.  Furthermore, the arc or curve $X$ can be chosen so that $P(X)$ has dense invertibles.
\end {theorem}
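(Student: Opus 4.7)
The plan is to deduce Theorem~\ref{arc} from Theorem~\ref{Cantorset-dense-invert} by ``thickening'' the Cantor set in $\C^3$ into an arc in $\C^4$, using the fourth complex coordinate as a parameterization.

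Let $C \subset \C^3$ be the Cantor set from Theorem~\ref{Cantorset-dense-invert}, so $\widehat C \supsetneq C$ and $P(C)$ has dense invertibles. Let $K \subset [0,1]$ be the standard middle-thirds Cantor set with complementary open intervals $I_k = (a_k, b_k)$, and fix a homeomorphism $\phi \colon K \to C$. Extend $\phi$ to a continuous map $\Phi \colon [0,1] \to \C^3$ by routing, on each $I_k$, a continuous arc $\gamma_k$ from $\phi(a_k)$ to $\phi(b_k)$ with $\operatorname{diam}(\gamma_k) \to 0$ as $k \to \infty$, chosen to lie outside $\widehat C$ except at the two endpoints. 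Define $\beta \colon [0,1] \to \C^4$ by $\beta(t) = (\Phi(t), t)$; since the fourth coordinate recovers $t$, the map $\beta$ is injective, so $A := \beta([0,1])$ is an arc.

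For the nontriviality of $\widehat A$, I would observe that $A$ contains the lifted Cantor set $B := \beta(K)$, which is the graph of $\phi^{-1} \colon C \to [0,1] \subset \C$ viewed over $C$, and that the projection $\pi \colon \C^4 \to \C^3$, $(z,w) \mapsto z$, carries $B$ homeomorphically onto $C$. Any point $z_0 \in \widehat C \setminus C$ corresponds to a nontrivial character of $P(C)$; extending this character to the algebra generated on $B$ by polynomials in $(z,w)$ produces a point $(z_0, w_0) \in \widehat B \setminus B$. Since the connecting arcs were routed outside $\widehat C$, such a lifted point cannot lie on $A \setminus B$, yielding $\widehat A \supsetneq A$.

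To propagate dense invertibles from $P(C)$ to $P(A)$ (which in particular implies that $\widehat A$ contains no analytic discs), fix $f \in P(A)$ and $\varepsilon > 0$ and aim to find invertible $g \in P(A)$ with $\|f - g\|_A < \varepsilon$. Restricting to $B$ and pulling back via $\pi$, one obtains a function on $C$ lying in the algebra $P(C)[\phi^{-1}]$; the dense invertibles in $P(C)$, combined with the fact that $\phi^{-1}$ is real-valued and may be perturbed into a nowhere-vanishing function, produce an invertible approximant over $B$, and the smallness of $\operatorname{diam}(\gamma_k)$ then permits extending this approximant to an invertible element of $P(A)$ close to $f$ on all of $A$. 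The simple closed curve variant is obtained by the analogous construction with $K$ realized as a Cantor subset of the unit circle and the fourth coordinate set to $e^{i\theta}$. The main obstacle is the extension argument in this last step: ensuring that an approximating invertible on $B$ extends to an invertible on all of $A$ without introducing zeros on the arcs $\beta(I_k)$. The vanishing-diameter condition on the $\gamma_k$ and the freedom to place them outside $\widehat C$ are the key flexibilities that make the extension succeed, following the general pattern of the dense-invertibility constructions of Dales--Feinstein and of the author.
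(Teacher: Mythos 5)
There is a fatal flaw in your choice of the fourth coordinate. You set $\beta(t)=(\Phi(t),t)$, so the restriction of the coordinate function $z_4$ to $A$ is the real-valued function $t$, which separates the points of $A$ --- indeed you invoke exactly this to see that $\beta$ is injective. But a compact set on which a single real-valued polynomial is injective is polynomially convex with $P(A)=C(A)$: since $z_4(A)=[0,1]$ and $z_4\colon A\to[0,1]$ is a homeomorphism, polynomials in the real variable $z_4$ alone are already uniformly dense in $C(A)$ by Weierstrass approximation, whence $\h A=A$ and the hull is trivial. The same applies to your lifted Cantor set $B$: it is the graph of the injective real-valued function $\phi^{-1}$ over $C$, so $P(B)=C(B)$ and $\h B=B$. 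Consequently the step ``extending this character to the algebra generated on $B$ \dots\ produces a point $(z_0,w_0)\in\h B\setminus B$'' cannot work: the character of $P(C)$ at $z_0\in\h C\setminus C$ does not extend to $P(B)$, because every character of $P(B)=C(B)$ is evaluation at a point of $B$. Adjoining a real-valued function to a uniform algebra destroys exactly those parts of the maximal ideal space lying over level sets on which the new function is not constant; by making the new coordinate injective you destroy all of $\h C\setminus C$.

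The repair is precisely what distinguishes the paper's construction: the fourth coordinate must be a real-valued function that is \emph{constant} (say $0$) on the Cantor set, so that the Cantor set survives as a single level set $J\times\{0\}$ with hull $\h J\times\{0\}$, while all other level sets are finite and contribute nothing. Injectivity of the embedding must then come from elsewhere: the paper first embeds $J$ in an arc $I$ inside $\C^3$ (Antoine's theorem) and then takes $X$ to be the graph over $I$ of a function $\rho\geq 0$ vanishing exactly on $J$ and strictly monotone on each half of each complementary interval, so that every nonzero level set of $\rho$ is finite. With that choice, \cite[Proposition 3.1(i)]{ISW} gives $\h X=X\cup(\h J\times\{0\})$, and the Bishop antisymmetric decomposition gives $P(X)=\{f\in C(X): f|(J\times\{0\})\in P(J\times\{0\})\}$, from which nontriviality of the hull, absence of analytic discs, and dense invertibles (via Lemma~\ref{dense-invert-lemma}) all follow. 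Your instinct to control diameters of the connecting arcs and to keep them off $\h C$ is not where the difficulty lies; the essential point you are missing is the collapse of the fourth coordinate on the Cantor set.
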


Given that a nontrivial hull need not in general contain an analytic disc, it is natural to ask whether weaker semblences of analyticity must be present such as nontrivial Gleason parts or nonzero bounded point derivations.  It is shown in the paper \cite{CGI} of Brian~Cole, Swarup~Ghosh, and the present author that even these weaker forms of analyticity need not be present in nontrivial hulls.  On the other hand, as shown in the paper \cite{Izzo} of the present author, there are also examples of hulls without analytic discs that have large Gleason parts and an abundance of nonzero bounded point derivations.  (The first result concerning Gleason parts in hulls without analytic discs is due to Richard Basener \cite{Basener}.)  By combining the methods of the present paper with those of \cite{CGI} and \cite{Izzo}, these results of \cite{CGI} and \cite{Izzo} will be sharpened by showing that the set in question can be taken to be a Cantor set.  

The construction of Cantor sets with nontrivial hulls without analytic discs to be given here involves an extension of the notions of polynomial and rational hull to be presented in Section~\ref{definitions}.  
We will define, for a compact set $X$ in $\CN$ and each $k=1,\ldots, N$, sets $\hk kX$ and $\hrk kX$, which we call the $k$-polynomial and $k$-rational hulls of $X$, respectively.  It will be immediate from the definitions that 
$$\h X=\hk 1X\supset \hr X=\hrk 1X\supset \hk 2X\supset \hrk 2X 
\supset\cdots\supset \hk NX \supset \hrk NX=X.$$
Using these new hulls, 
a generalization of the construction of Julien~Duval and Levenberg \cite{DuvalL} of hulls without analytic discs will be given that brings out the fundamental principle behind the construction and
yields a very flexible method of obtaining hulls without analytic discs (Theorem~\ref{Duval}).  
In particular, it will be shown that every compact set $X$ having nontrivial
2-polynomial hull contains a subset having polynomial hull without analytic discs (Corollary~\ref{main-cor}).  This was the key observation that led the author to the results of the present paper.
The existence, for each $N\geq 2$, of a Cantor set in $\C^N$ whose $(N-1)$-rational hull has interior will be proven (Theorem~\ref{CantorLhull}).  The existence of the desired Cantor set with polynomial hull without analytic discs then follows.  Another consequence is the existence of a totally disconnected, perfect subset of $\CN$ that intersects every analytic subvariety of $\CN$ of positive dimension (Theorem~\ref{perfectset}).

In response to a question raised by Rudin~\cite{Birtel},
Vitushkin~\cite{Vit} constructed a Cantor set in $\C^2$ whose polynomial hull has interior.  Later Henkin~\cite{Henkin} constructed a Cantor set in $\C^2$ whose \emph{rational} hull has interior.  
Theorem~\ref{CantorLhull} generalizes
Henkin's result to $\C^N$, $N>2$, in a particularly strong form.  It was the problem of suitably generalizing Henkin's result that originally motivated the author to consider the new hulls introduced in this paper.

By moving to an abstract uniform algebra context, one can consider removing the restriction in Theorem~\ref{maintheorem} that $K$ be a subspace of a Euclidean space.
In the case of compact Hausdorff spaces that contain a Cantor set, one obtains the following result.

\begin {theorem} \label{generalspace} 
Let $K$ be a compact Hausdorff space that contains a Cantor set as a subspace.  Then there exists a uniform algebra $A$ on $K$ such that $\ma \bs K$ is nonempty but $\ma$ contains no analytic discs.  {\rm(}Here $\ma$ denotes the maximal ideal space of $A$.{\rm)}
\end {theorem}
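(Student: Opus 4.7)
The plan is to transfer the Cantor-set example of Theorem~\ref{Cantorset} to $K$ via a standard uniform-algebra adjunction. Fix a Cantor set $E \subset K$ and, invoking Theorem~\ref{Cantorset}, a homeomorphism $\psi: E \to Y$ onto a Cantor set $Y \subset \C^3$ whose polynomial hull is nontrivial but contains no analytic discs. Pulling back $P(Y)$ by $\psi$ yields a uniform algebra $B$ on $E$ with $\mathfrak M_B$ naturally identified with $\h Y$. I then define
\[
A = \{ f \in C(K) : f|_E \in B \}.
\]
Since $K$ is normal and $E$ is closed, Tietze extension makes the restriction $A \to B$ surjective. The algebra $A$ contains the ideal $I = \{f \in C(K) : f|_E = 0\}$, which is naturally $C_0(K \setminus E)$, and restricts to $B$ on $E$; since $B$ separates points of $E$ and $I$ handles separation for the remaining pairs, $A$ separates the points of $K$, and is therefore a uniform algebra on $K$.

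The heart of the argument is to identify $\ma$ as the topological pushout $K \sqcup_E \mathfrak M_B$. A character $\phi$ of $A$ either annihilates $I$ or does not. In the first case, $\phi$ descends to a character of $A/I$; since restriction $A \to B$ is surjective with kernel $I$, we have $A/I \cong B$, so $\phi$ corresponds to a point of $\mathfrak M_B$. In the second case, $\phi|_I$ is a nonzero multiplicative functional on $C_0(K \setminus E)$, hence is evaluation at some $x \in K \setminus E$; choosing $f \in I$ with $\phi(f) = 1$ and writing $\phi(h) = \phi(hf) = (hf)(x) = h(x)$ shows $\phi = \text{ev}_x$ on all of $A$. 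The natural maps from $K$ and $\mathfrak M_B$ into $\ma$ are continuous injections with closed image, yielding the identification $\ma = K \cup \mathfrak M_B$ glued along $E$. In particular $\ma \setminus K \cong \mathfrak M_B \setminus E \cong \h Y \setminus Y$ is nonempty.

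Finally, to rule out analytic discs, suppose $\phi: \overline D \to \ma$ is a nonconstant analytic disc. Both $K$ and $\mathfrak M_B$ are closed in $\ma$, so $K \setminus E$ is open. If $\phi(D) \subset \mathfrak M_B$ then $\phi$ is an analytic disc in $\mathfrak M_B$, contradicting the choice of $Y$. Otherwise there exists $z_0 \in D$ with $\phi(z_0) = p \in K \setminus E$; choose a neighborhood $V$ of $p$ disjoint from $E$. For any real continuous $g$ supported in $V$, Urysohn-extending by $0$ places $g$ in $I \subset A$, so $g \circ \phi$ is a real-valued analytic function on $D$, hence constant. Since such $g$'s separate points of $K \setminus E$, $\phi$ is constant on the open set $\phi^{-1}(K \setminus E)$, equal to $p$ there; then the identity theorem applied to $f \circ \phi$ for all $f \in A$, together with the fact that $A$ separates points of $\ma$, forces $\phi \equiv p$ on $D$, a contradiction. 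The main obstacle is the pushout identification of $\ma$ in the second paragraph; once the structure of $\ma$ is in hand, the analytic-disc exclusion follows mechanically from Theorem~\ref{Cantorset} and Tietze/Urysohn.
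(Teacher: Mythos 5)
Your proposal is correct and follows the same overall architecture as the paper's proof: the same algebra $A=\{f\in C(K):f|_E\in B\}$, the same description of $\ma$ as $K\cup\mm_B$ glued along the Cantor set, and the same two-case exclusion of analytic discs (discs inside $\mm_B$ are ruled out by the choice of $B$; discs meeting $K\setminus E$ are killed because the real-valued functions in $A$ separate points there and compose with the disc to give constant functions). The one place where you genuinely diverge is the identification of $\ma$. The paper gets it by citing two external results: Gamelin's theorem that $\mm_B$ is the $A$-convex hull of $J$ in $\ma$, and Bear's theorem on algebras of the form $\{f\in C(\Sigma):f|\mm_B\in \widehat B\}$, which together force $\ma=K\cup\mm_B$. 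You instead compute the character space directly from the ideal $I=\{f:f|_E=0\}\cong C_0(K\setminus E)$: a character either annihilates $I$ and factors through $A/I\cong B$ (surjectivity of restriction via Tietze), or restricts to a nonzero character of $C_0(K\setminus E)$ and is then forced to be evaluation at a point of $K\setminus E$ by the $\phi(h)=\phi(hf)=h(x)$ trick. This is longer but elementary and self-contained, and it hands you the pushout topology on $\ma$ explicitly, whereas the paper's route is shorter at the cost of two nontrivial citations. Two cosmetic points: an analytic disc in $\ma$ is by the paper's definition injective, so ``nonconstant'' is automatic; and your real-valued test functions supported in $V$ directly give constancy of $\phi$ only on $\phi^{-1}(V)$ rather than on all of $\phi^{-1}(K\setminus E)$ --- but that nonempty open set is all the identity-theorem step needs, so the argument stands.
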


This theorem applies to every uncountable, compact metrizable space since these spaces always contain a Cantor set.  When one moves beyond metrizable spaces, there are uncountable, compact Hausdorff spaces on which there are no nontrivial uniform algebras (i.e., uniform algebras other than the algebra of all continuous complex-valued functions on the given space), and obviously no theorem along the lines of Theorem~\ref{generalspace} can apply to such spaces.  Indeed a theorem of Walter Rudin \cite{Rudin-scat} asserts that there are no nontrivial uniform algebras on a scattered space.  (A space $X$ is said to be {\it scattered} if it contains no nonempty perfect subsets, or equivalently, if every nonempty subset of $X$ has an isolated point.)
Note that this implies, in particular, that there are never nontrivial uniform algebras on countable spaces.  There are nonscattered, compact Hausdorff spaces that contain no Cantor set, and on some of these spaces (for instance the Stone-\v Cech compactification of the positive integers) there do exist nontrivial uniform algebras, and on some others (for instance the double arrow space of Aleksandrov and Urysohn) there are no nontrivial uniform algebras.  We refer the reader to the paper \cite{Kunen} of Kenneth Kunen and the references
therein.  
These remarks lead to the following two questions, which in the case of nonmetrizable spaces, seem to be open.
While these questions are clearly related to the subject of the present paper, they are of a quite different character and will not be addressed here.

\begin{question}
If $X$ is a compact Hausdorff space on which there exists a nontrivial uniform algebra, must there exist a uniform algebra on $X$ whose maximal ideal space is strictly larger than $X$?
\end{question}

\begin{question}
If $X$ is a compact Hausdorff space on which there exists  a uniform algebra whose maximal ideal space is strictly larger than $X$, must there exist a uniform algebra on $X$ whose maximal ideal space is strictly larger than $X$ but contains no analytic discs?
\end{question}

In the next section
we make explicit some standard definitions and notations already used above.  
In Section~\ref{definitions} we introduce the extensions of the notions of polynomial and rational hull upon which our constructions rely.  Some useful lemmas are proved in Section~\ref{lemmas}.  The generalization of the construction of Duval and Levenberg and some variations are given in Section~\ref{DuvalL-section}.  The Cantor set in $\CN$ whose 
$(N-1)$-rational hull contains interior 
and the totally disconnected perfect set that intersects every analytic variety are constructed in Section~\ref{Cantor-set-section}.  The Cantor sets with nontrivial polynomial hulls containing no analytic discs are given in Section~\ref{Cantor-no-discs}.  
Finally, the results about arcs, curves, and general spaces with hulls without analytic discs are proved in Section~\ref{spaces-no-discs}.

The author would like to thank Lee Stout for helpful correspondence related to this paper.

It is with a mixture of joy and sorrow that I dedicate this paper to the memory of my doctoral advisor, Donald Sarason.  Sorrow, of course, that he is no longer with us; joy that
I had the privilege of studying under him and benefiting from his guidance.  Sarason had an amazing ability to impart his wisdom and inspire his students while encouraging them to find their own way.  Without his guidance, my path would surely have been very different.

%

\section{Preliminaries}~\label{prelim}
For
$X$ a compact Hausdorff space, we denote by $C(X)$ the algebra of all continuous complex-valued functions on $X$ with the supremum norm
$ \|f\|_{X} = \sup\{ |f(x)| : x \in X \}$.  A \emph{uniform algebra} on $X$ is a closed subalgebra of $C(X)$ that contains the constant functions and separates
the points of $X$.  

For a compact set $X$ in $\CN$, the \emph{polynomial hull} $\h X$ of $X$ is defined by
$$\h X=\{z\in\CN:|p(z)|\leq \max_{x\in X}|p(x)|\
\mbox{\rm{for\ all\ polynomials}}\ p\},$$
and the
\emph{rational hull} $\hr X$ of $X$ is defined by
$$\hr X = \{z\in\C^N: p(z)\in p(X)\ 
\mbox{\rm{for\ all\ polynomials}}\ p
\}.$$
An equivalent formulation of the definition of $\hr X$ is that $\hr X$ consists precisely of those points $z\in \C^N$ such that every polynomial that vanishes at $z$ also has a zero on $X$.

We denote by 
$P(X)$ the uniform closure on $X\subset\CN$ of the polynomials in the complex coordinate functions $z_1,\ldots, z_N$, and we denote by $R(X)$ the uniform closure of the rational functions  holomorphic on (a neighborhood of) $X$. 
Both $P(X)$ and $R(X)$ are uniform algebras, and
it is well known that the maximal ideal space of $P(X)$ can be naturally identified with $\h X$, and the maximal ideal space of $R(X)$ can be naturally identified with $\hr X$.

We denote the open unit disc in the complex plane by $D$.  By an \emph{analytic disc} in $\CN$, we mean an injective holomorphic map $\sigma: D\rightarrow\CN$.
By the statement that a subset $S$ of $\CN$ contains no analytic discs, we mean that there is no analytic disc in $\CN$ whose image is contained in $S$.
An \emph{analytic disc} in the maximal ideal space $\ma$ of a uniform algebra $A$ is, by definition, an injective map $\sigma:D\rightarrow \ma$ such that the function $f\circ \sigma$ is analytic on $D$ for every $f$ in 
$A$. 

Let $A$ be a uniform algebra on a compact space $X$.
The \emph{Gleason parts} for the uniform algebra $A$ are the equivalence classes in the maximal ideal space of $A$ under the equivalence relation $\varphi\sim\psi$ if $\|\varphi-\psi\|<2$ in the norm on the dual space $A^*$.  (That this really is an equivalence relation is well-known but {\it not\/} obvious!)
We say that a Gleason part is \emph{nontrivial} if it contains more than one point.

For $\phi$ a multiplicative linear functional on $A$, a \emph{point derivation} on $A$ at $\phi$ is a linear functional $\psi$ on $A$ satisfying the identity 
$$\phantom{\hbox{for all\ } f,g\in A.} \psi(fg)=\psi(f)\phi(g) + \phi(f)\psi(g)\qquad \hbox{for all\ } f,g\in A.$$
A point derivation is said to be \emph{bounded} if it is bounded (continuous) as a linear functional.
It is immediate that the presence of an analytic disc in the maximal ideal space of $A$ implies the existence of a nontrivial Gleason part and nonzero bounded point derivations.

The real part of a complex number (or function) $z$ will be denoted by $\Re z$.   
The Euclidean norm of a point $z\in \CN$ will be denoted by $\|z\|$.




\section{Generalizations of polynomial and rational hulls} \label{definitions}

As mentioned in the introduction, in this section we define extensions of the notions of polynomial and rational hull that are key to the construction of the new examples of hulls without analytic discs presented in this paper.  Throughout the section, $X$ denotes a compact subset of $\CN$.

Recall that Henkin~\cite{Henkin} constructed a Cantor set in $\C^2$ whose rational hull has interior.  A direct generalization of Henkin's result to $\CN$, $N>2$, is not completely satisfying because while when $N=2$, the statement $z\in \hr X$ means that every analytic subvariety of $\C^N$ (of pure positive dimension) passing through $z$ intersects $X$, when $N>2$, the statement $z\in \hr X$ means only that every pure codimension 1 analytic subvariety of $\C^N$ passing through $z$ intersects $X$.  This observation suggests the following extension of the notion of rational hull.  (Here and throughout the paper, by an analytic subvariety of $\CN$ of pure codimension $m$, we mean a pure dimensional analytic subvariety of $\CN$ of pure dimension $N-m$.)

\begin{definition}
For $1\leq k\leq N$, the \emph{$k$-rational hull} $\hrk kX$ of 
$X$ is the set
\begin{eqnarray*}
\hrk kX&\!\!\!=&\!\!\!\{ z\in \CN: {\rm every\ analytic\ subvariety\ of\ } \CN {\rm\ of\ pure\ codimension}\leq k \\
& &\qquad\qquad  {\rm that\ passes\ through\ } z\ {\rm intersects\ } X\}.
\end{eqnarray*}
We say that $X$ is \emph{$k$-rationally convex} if $\hrk kX = X$.
\end{definition}

We define an analogous extension of the notion of polynomial hull as follows.

\begin{definition}
For $2\leq k\leq N$, the \emph{$k$-polynomial hull} $\hk kX$ of 
$X$ is the set
\begin{eqnarray*}
\hk kX&\!\!\!=&\!\!\!\{ z\in \C^N: z\in \hrk {k-1}X\ {\rm and\ } z\in \h {X\cap V}\ {\rm for\ every} \\ 
& &\qquad\qquad{\rm analytic\ subvariety}\ V\ {\rm of\ } \CN {\rm\ of\ pure\  codimension}\leq k-1 \\
& &\qquad\qquad  {\rm that\ passes\ through\ } z\} .
\end{eqnarray*}
The \emph{$1$-polynomial hull} $\hk 1X$ of $X$ is the usual\vadjust{\kern 2pt} polynomial hull $\h X$.
We say that $X$ is \emph{$k$-polynomially convex} if $\hk kX = X$.
\end{definition}

It is immediate from the definitions that 
$$\h X=\hk 1X\supset \hr X=\hrk 1X\supset \hk 2X\supset \hrk 2X 
\supset\cdots\supset \hk NX \supset \hrk NX=X.$$
Also it is easily verified that the $k$-polynomial hull of a compact set is $k$-polynomially convex and the $k$-rational hull is $k$-rationally convex.

One could also consider modifications of the above definitions.

\begin{definition}
For $1\leq k\leq N$, the \emph{quasi-$k$-rational hull} $\khr kX$ of 
$X$ is the set
\begin{eqnarray*}
\khr kX&\!\!\!=&\!\!\!\{ z\in \CN: {\rm if}\ p_1,\ldots, p_k\ {\rm are\ polynomials\ such\ that}  \\
&&p_1(z)=0,\ldots,  p_k(z)=0,\ {\rm then}\ p_1,\ldots, p_k\ {\rm have\ a\ common\ zero\ on}\ X\}.
\end{eqnarray*}
\end{definition}

\begin{definition}
For $2\leq k\leq N$, the \emph{quasi-$k$-polynomial hull} $\kh kX$ of 
$X$ is the set
\begin{eqnarray*}
\kh kX&\!\!\!=&\!\!\!\ \bigl\{ z\in \C^N: z\in \khr {k-1}X\ {\rm and\ } {\rm if}\ p_1,\ldots, p_{k-1}\ {\rm are\ polynomials\ such\ that}  \\
&&p_1(z)=0,\ldots, p_{k-1}(z)=0,\ {\rm then}\ z\in (X \cap\{p_1=0, \ldots,  p_{k-1}=0\})\endhat\,\bigr\}.
\end{eqnarray*}
The \emph{quasi-$1$-polynomial hull} $\kh 1X$ of $X$ is the usual polynomial hull $\h X$.
\end{definition}

The $k$-hulls are ostensibly smaller than the corresponding quasi-$k$-hulls.  Thus the assertion that a set has a nontrivial $k$-hull is stronger than the assertion that the corresponding quasi-$k$-hull is nontrivial.  Therefore, 
we will use exclusively the $k$-hulls.
However, the quasi-$k$-hulls are perhaps more intuitive, and the reader who wishes to do so, may replace the $k$-hulls by the quasi-$k$-hulls throughout.  
Everything will go through essentially unchanged \emph{except} that Theorem~\ref{perfectset} will no longer be obtained and will have to be replaced by an ostensibly weaker statement.

The above hulls seem not to have been considered previously in the literature.  However, the notions of \emph{convexity} corresponding the $k$-rational hulls were introduced by Evgeni Chirka and Stout in~\cite{ChirkaS}.  In addition, the above extensions of the notion of polynomial hull are related to work of Basener~\cite{BasenerShilov} and Nessim Sibony~\cite{Sibony} on generalized boundaries and multidimensional analytic structure:  Applied to the uniform algebra $P(X)$ with $X$ polynomially convex, Basener's generalized Shilov boundary $\partial_k P(X)$ is the smallest closed subset of $X$ whose quasi-$(k+1)$-polynomial hull is equal to $X$.




\section {Some preliminary lemmas}\label{lemmas}

In this section we prove some useful, elementary lemmas regarding $k$-hulls.

\begin{lemma} \label{hullintersection}
Let $\kk$ be a collection of compact sets in $\CN$ totally ordered by inclusion.  Let $K_\infty=\bigcap_{K\in \kk} K$.  Then $\hk k {K}_\infty=\bigcap_{K\in \kk} \hk k K$ and $\hrk k {K_\infty} = \bigcap_{K\in \kk} \hrk k K$.
\end{lemma}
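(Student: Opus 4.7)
The plan is to prove both equalities by treating the two inclusions separately. The $\subset$ direction is immediate in both cases from the monotonicity $X\subset Y\Rightarrow\hk kX\subset\hk kY$ (and similarly for $\hrk k$), which is evident from the definitions, applied to $K_\infty\subset K$ for every $K\in\kk$. So all the real work lies in the reverse inclusion.

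I would first dispose of the rational hull identity, since the polynomial case can then be reduced to it. Take $z\in\bigcap_{K\in\kk}\hrk kK$ and let $V$ be an analytic subvariety of $\CN$ of pure codimension at most $k$ passing through $z$; the goal is to show $V\cap K_\infty\neq\emptyset$. For each $K\in\kk$, $V\cap K$ is a nonempty compact set (using that an analytic subvariety of $\CN$ is closed), and since $\kk$ is totally ordered by inclusion so is the family $\{V\cap K\}_{K\in\kk}$. The finite intersection property then yields $V\cap K_\infty=\bigcap_{K\in\kk}(V\cap K)\neq\emptyset$, which is exactly what is needed.

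For the polynomial hull identity the argument invokes two ingredients: the rational hull identity just proved, together with the classical fact that $\h{\,\bigcap_{K\in\kk}K\,}=\bigcap_{K\in\kk}\h K$ for a totally ordered decreasing family of compact sets. (This classical fact follows because, for any polynomial $p$, the sup norms $\|p\|_K$ decrease to $\|p\|_{K_\infty}$ by a routine compactness argument, so the defining inequalities for $\h{K_\infty}$ pass to the limit.) The case $k=1$ is then exactly this classical fact. For $k\geq 2$, take $z\in\bigcap_{K\in\kk}\hk kK$; unpacking the definition one has $z\in\hrk{k-1}K$ and $z\in\h{K\cap V}$ for every $K\in\kk$ and every analytic subvariety $V$ of pure codimension at most $k-1$ through $z$. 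The rational hull identity at level $k-1$ gives $z\in\hrk{k-1}{K_\infty}$, and applying the classical fact to the totally ordered family $\{K\cap V\}_{K\in\kk}$ (which has intersection $K_\infty\cap V$) yields $z\in\h{K_\infty\cap V}$, as required.

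The only nonroutine ingredient is the classical statement that polynomial hulls commute with intersections of totally ordered decreasing families of compact sets; once this well-known fact is on the table, everything else is bookkeeping around the definitions in Section~\ref{definitions}.
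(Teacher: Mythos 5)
Your proof is correct and takes essentially the same approach as the paper: both directions of the easy inclusion are handled by monotonicity, and the reverse inclusions rest on the same compactness principle (for a nested family of compacta, a neighborhood of the intersection contains one of the sets, equivalently the finite intersection property). You merely phrase the argument directly rather than contrapositively and factor out the $k=1$ polynomial-hull case as a classical lemma applied to the family $\{K\cap V\}$, which is exactly the neighborhood argument the paper runs inside $V$.
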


\begin{proof}
The inclusions $\hk k {K}_\infty\subset\bigcap_{K\in \kk} \hk k K$ and $\hrk k {K_\infty}\subset\bigcap_{K\in \kk} \hrk k K$  are obvious.  

To establish the reverse inclusion for $k$-rational hulls,  suppose $x\notin \hrk k {K_\infty}$.  Then there exists a subvariety $V$ of $\CN$ of pure codimension $\leq k$ passing through $x$ and disjoint from $K_\infty$.  Then $\CN\setminus V$ is a neighborhood of $K_\infty$.  Consequently, some set $K\in\kk$ is contained in $\CN\setminus V$, and then $x\notin \hrk k K$.  Thus the result holds for 
$k$-rational hulls.

To complete the proof for $k$-polynomial hulls, suppose now that $x\notin \hk k {K}_\infty$.  If  $x\notin h_r^{k-1} ({K_\infty})$, then by the preceding paragraph, $x\notin \bigcap_{K\in\kk} h_r^{k-1} (K)\supset \bigcap_{K\in\kk} \hk k K$.  Thus we may assume that $x\in h_r^{k-1} ({K_\infty})$.  Then for some subvariety $V$ of $\CN$ of pure codimension $\leq k-1$ passing through $x$, there exists a polynomial $p$ such that $|p(x)|> \|p\|_{K_\infty \cap V}$.  Thus the set $U=\bigl\{w\in V: |p(w)|<|p(x)|\bigr\}$ is a neighborhood of $K_\infty$ in $V$.  Consequently, $U$ contains $K\cap V$ for some $K\in\kk$, and then $x\notin \hk k K$.
\end{proof}

In obtaining our Cantors sets we will make repeated use of the well known characterization of Cantor sets as the compact, totally disconnected, metrizable spaces without isolated points.  The need to verify that our constructions produce sets without isolated points will be avoided by use of the following lemma.  Recall that a subset of a space is said to be {\it perfect\/} if it is closed and has no isolated points.  Every space contains a unique largest perfect subset (which can be empty).  To see this, simply note that the closure of the union of all perfect subsets of a space $X$ is itself a perfect subset of $X$.

\begin{lemma}\label{reductiontop}
Let $X\subset \CN$ be a 
compact set and let $E$ be the largest perfect subset of $X$.  Then $\hhk k E \supset \hhk k X$ and $\hhrk k E\supset\hhrk k X$.
\end{lemma}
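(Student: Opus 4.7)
The plan is to reduce the claim, via Zorn's lemma combined with Lemma~\ref{hullintersection}, to a sub-claim about the removal of a single isolated point, and then to prove the sub-claim by perturbing the defining equations of an analytic subvariety.

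Fix $z\in\hhk k X$ (the argument for $\hhrk k X$ is parallel). Consider
$$\mathcal{C}=\{K:\, K\subset\CN\ \text{compact},\ E\subset K\subset X,\ z\in\hk k K\},$$
which is nonempty (it contains $X$) and ordered by reverse inclusion. By Lemma~\ref{hullintersection}, the intersection of any chain in $\mathcal{C}$ is again in $\mathcal{C}$, so Zorn's lemma furnishes a minimal element $K_0\in\mathcal{C}$. I claim $K_0=E$. If not, then since $E$ is still the largest perfect subset of $K_0$ (because $E\subset K_0\subset X$), $K_0$ must have some isolated point $p\notin E$; the sub-claim below then yields $z\in\hk k{K_0\setminus\{p\}}$, contradicting minimality (as $K_0\setminus\{p\}$ is compact and still contains $E$). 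Hence $K_0=E$, and together with $z\notin X\supset E$ this gives $z\in\hk k E\setminus E=\hhk k E$.

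\emph{Sub-claim.} If $p$ is an isolated point of a compact set $K\subset\CN$ and $z\in\hk k K\setminus K$, then $z\in\hk k{K\setminus\{p\}}$, and analogously for $\hrk k$. For the rational version, let $V$ be an analytic subvariety of $\CN$ of pure codimension $\leq k$ through $z$. Since $z\in\hrk k K$, $V\cap K\neq\emptyset$; if $V$ meets $K$ somewhere other than $p$ we are done, so assume $V\cap K=\{p\}$. I derive a contradiction with $z\in\hrk k K$ by constructing an analytic subvariety $V'$ of $\CN$ of pure codimension $\leq k$ through $z$ with $V'\cap K=\emptyset$. Pick a ball $B$ about $p$ with $B\cap K=\{p\}$; then $V\cap(K\setminus B)=\emptyset$ with $K\setminus B$ compact, so $V$ is uniformly bounded away from $K\setminus B$. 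By Cartan's Theorem~A on the Stein manifold $\CN$, the ideal of $V$ is globally generated by entire functions; using suitable $\mathcal{O}(\CN)$-linear combinations, realize $V$ as an irreducible component of a pure-codimension-$k$ complete intersection $W=\{g_1=\cdots=g_k=0\}$, chosen so that $W\cap K=\{p\}$ (this uses the isolation of $p$ in $K$ together with a genericity argument). Choose a polynomial $h$ with $h(z)=0$ and $h(p)\neq 0$, and set $V'=\{g_1-\epsilon h=0,\ g_2=\cdots=g_k=0\}$ for small $\epsilon\neq 0$. Then $V'$ is a pure-codimension-$k$ analytic variety through $z$ avoiding $p$, and being Hausdorff-close to $W$ on compact subsets of $\CN$, $V'\cap(K\setminus B)=\emptyset$; combined with $V'\cap B\cap K\subset V'\cap\{p\}=\emptyset$, this gives $V'\cap K=\emptyset$, as desired. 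The polynomial version of the sub-claim follows by induction on $k$, combining the rational version with the polynomial-hull condition in the definition of $\hk k K$ (and invoking the standard $k=1$ fact that removing an isolated point preserves $\hh{K}$).

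The main technical difficulty is arranging $W\cap K=\{p\}$ when $V$ itself is not a global complete intersection: this requires a genericity argument for the $\mathcal{O}(\CN)$-linear combinations used to form the $g_j$, exploiting the isolation of $p$ in $K$ and the Stein structure of $\CN$.
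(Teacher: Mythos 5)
Your global strategy (Zorn's lemma combined with Lemma~\ref{hullintersection} to reduce everything to the removal of a single isolated point) matches the paper's, but the way you then dispose of the isolated point for $k\geq 2$ is where the argument breaks down. The whole proof rests on the rational version of your sub-claim, and its key step is not proved: you must place the given pure-codimension-$\leq k$ variety $V$ (which meets $K$ only at $p$) inside a common zero set $W=\{g_1=\cdots=g_k=0\}$ of $k$ entire functions with $W\cap K=\{p\}$, and you defer exactly this --- which you yourself call ``the main technical difficulty'' --- to an unspecified genericity argument. This is not a routine genericity statement. Cartan's Theorem~A gives finitely many global sections $s_1,\ldots,s_m$ of $\mathcal{I}_V$ with no common zero on the compact set $K\setminus B$, but whether $k$ combinations of them (even with coefficients in $\mathcal{O}(\CN)$) can be chosen with no common zero on $K\setminus B$ is a set-theoretic-complete-intersection type question; a naive dimension count does not close, since for each fixed $q\in K\setminus B$ the bad set of coefficient matrices has real codimension only $2k$, while $K\setminus B$ may have real dimension $2N>2k$. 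As written, the central step of the sub-claim is an assertion, not a proof.

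The paper avoids this difficulty entirely and never perturbs a higher-codimension variety. For the $k$-polynomial statement it first settles the case $k=1$ by the Zorn argument together with the elementary fact that adjoining one point to a polynomially convex set leaves it polynomially convex, and then handles general $k$ by slicing: if $x\in \hhk kX$ and $V$ is a subvariety of pure codimension $\leq k-1$ through $x$, then $x\in \h{X\cap V}$ by the very definition of $\hk kX$, the case $k=1$ applied to the compact set $X\cap V$ produces a perfect set $J\subset X\cap V$ with $x\in\h J$, and $J\subset E\cap V$ because $E$ is the largest perfect subset of $X$; hence $x\in\h{E\cap V}$. The only perturbation ever required is the one-polynomial replacement $p\mapsto p+\vep q$ in the proof that adjoining a point preserves rational convexity. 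I recommend restructuring your argument along these lines rather than attempting to fill the gap in the variety-perturbation approach.
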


\begin{proof}
We first prove the inclusion $\hh E\supset\hh X$. 
Let $\cc$ denote the collection of all closed subsets $C$ of $X$ such that $\hh C \supset \hh X$.  Clearly $\cc$ is nonempty since $X\in \cc$.  If $\cc'$ is a chain in $\cc$, and if $C_\infty=\bigcap_{C\in \cc'} C$, then by the above lemma, $\h C_\infty\supset \hh X$, and hence, $\widehat C_\infty \setminus C_\infty \supset \hh X$.  Thus by Zorn's lemma, 
$\cc$ has a minimal element $F$.  We claim that $F$ has no isolated points.  Assume to the contrary that $F$ has an isolated point, i.e., $F= G\cup \{p\}$ for some closed set $G$ and point $p\notin G$.  It is easily shown that adjoining a single point to a polynomially convex set yields another polynomially convex set \cite[Lemma~2.2]{AIW2001}, so the set $\h G \cup \{p\}$ is polynomially convex.  Thus $\h F = \h G \cup \{p\}$.  Hence $\hh G \supset \hh F$, so $\hh G \supset \hh X$, contradicting the minimality of $F$.  Thus $F$ is a perfect set. 
Consequently, as the {\it largest\/} perfect subset of $X$, the set $E$ contains $F$.   Consequently, $\h E \supset \hh F$, and hence $\hh E \supset \hh X$, as desired.

Now to prove the general case of the inclusion $\hhk k E \supset \hhk k X$, fix $x\in \hhk k X$ and let $V$ be a  subvariety of $\CN$ of pure codimension $\leq k-1$ passing through $x$.  Then $x\in \h {X\cap V}$.  By the result of the preceding paragraph, there is a perfect subset $J$ of $X \cap V$ such that $x\in \h J$.  Then $J\subset E$, and hence, $J\subset E\cap V$.  Therefore, $x\in \h {E\cap V}$.  We conclude that $x\in \hk k E$, and hence, $x\in \hhk k E$, as desired.

The statement for $k$-rational hulls is proved similarly except that in place of \cite[Lemma~2.2]{AIW2001} we must use the following analogue for rational convexity. 
\end{proof}

\begin{lemma}
If $K\subset \CN$ is rationally convex and $z_0\in \CN$, then $K\cup \{z_0\}$ is also rationally convex.
\end{lemma}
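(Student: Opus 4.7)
The plan is to verify directly that every $w \in \hr{K\cup\{z_0\}}$ lies in $K\cup\{z_0\}$. If $w = z_0$ there is nothing to prove, so I will assume $w \neq z_0$ and aim to show $w \in \hr K$, which equals $K$ by hypothesis. Using the equivalent formulation of the rational hull recalled in the preliminaries, the goal reduces to showing that every polynomial $p$ with $p(w) = 0$ also has a zero on $K$.

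The key device will be a linear perturbation that promotes a zero of $p$ at $z_0$ into a zero on $K$. Since $w \neq z_0$, I can choose an affine linear polynomial $q$ with $q(w) = 0$ and $q(z_0) \neq 0$; for instance, $q(z) = z_j - w_j$ for any coordinate $j$ in which the points $w$ and $z_0$ disagree. Given $p$ vanishing at $w$, I would then consider the one-parameter family $r_\lambda = p + \lambda q$. Each $r_\lambda$ vanishes at $w$, while $r_\lambda(z_0) = p(z_0) + \lambda q(z_0)$ is nonzero for every scalar $\lambda$ except $\lambda = -p(z_0)/q(z_0)$.

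For every $\lambda$ avoiding that single exceptional value, the hypothesis $w \in \hr{K\cup\{z_0\}}$ forces $r_\lambda$ to vanish at some point of $K\cup\{z_0\}$, and since $r_\lambda(z_0) \neq 0$, this point must lie in $K$. Call it $k_\lambda$, so that $p(k_\lambda) = -\lambda q(k_\lambda)$. Letting $\lambda \to 0$ and extracting, by compactness of $K$, a convergent subsequence $k_{\lambda_n} \to k_\infty \in K$, the boundedness of $q$ on $K$ shows that the right-hand side tends to $0$, whence $p(k_\infty) = 0$. Thus $p$ has a zero on $K$, as required.

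I do not anticipate any serious obstacle. The only genuine idea is the perturbation by $\lambda q$, which cancels the value of $p$ at $z_0$ so that the existence-of-a-zero consequence of $w \in \hr{K\cup\{z_0\}}$ can be forced onto $K$; after that, the compactness-limit step is routine.
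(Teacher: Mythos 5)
Your proof is correct. It shares with the paper's proof the one essential idea --- perturbing a polynomial by a multiple of an affine function $q$ that vanishes at the point under consideration but not at $z_0$ --- but the two arguments are organized differently. The paper argues contrapositively: given $z\notin K\cup\{z_0\}$, rational convexity of $K$ supplies a polynomial $p$ with $p(z)=0$ that is zero-free on $K$, and if $p(z_0)=0$ one replaces $p$ by $p+\vep q$ with $\vep$ small, noting that a sufficiently small perturbation cannot create zeros on the compact set $K$ where $|p|$ is bounded below; this directly exhibits a polynomial separating $z$ from $K\cup\{z_0\}$. You instead run the argument forward: for $w\in h_r(K\cup\{z_0\})$ with $w\neq z_0$ and an arbitrary polynomial $p$ vanishing at $w$, you force a zero of $p+\lambda q$ onto $K$ for a sequence of small $\lambda$ and pass to a limit by compactness. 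Your route costs an extra limiting step but buys something the paper's does not: it never uses rational convexity of $K$ until the final identification $h_r(K)=K$, so it actually establishes the more general inclusion $h_r(K\cup\{z_0\})\subset h_r(K)\cup\{z_0\}$ for an arbitrary compact $K$. Both proofs are complete; just make sure to note (as you implicitly do) that the single exceptional value $\lambda=-p(z_0)/q(z_0)$ is either nonzero or equal to $0$, so a sequence of small nonzero $\lambda$ avoiding it always exists.
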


\begin{proof}
Assume $z_0\notin K$, since otherwise there is nothing to prove.  Suppose $z\notin K\cup \{z_0\}$.  Then by the rational convexity of $K$, there is a polynomial $p$ such that $p(z)=0$ but $p$ has no zeros on $K$.  If $p(z_0)\neq 0$, we conclude at once that $z\notin h_r(K\cup \{z_0\})$.  Otherwise choose a polynomial $q$ such that $q(z)=0$ but $q(z_0)=1$.  Then for $\vep>0$ small enough, the polynomial $p+\vep q$ has no zeros on $K$, and of course 
$(p +\vep q)(z_0)=\vep\neq 0$ and $(p +\vep q)(z)=0$.
Thus $z\notin h_r(K\cup \{z_0\})$.
\end{proof}




\section{Generalization of the construction\\ of Duval and Levenberg}\label{DuvalL-section}

In this section we use the $k$-hulls defined above to generalize the construction of a polynomial hull without analytic discs due to Duval and Levenberg \cite{DuvalL} and to generalize related constructions given by the present author in \cite{Izzo}.  Throughout the section we make the tacit hypothesis that $N\geq 2$.

As explained in \cite{Izzo}, the construction of Duval and Levenberg is closely related to the construction of a {\em rational} hull with no analytic discs due to Wermer~\cite{Wermerpark} (or see~\cite[Chapter~24]{AW}).
In both the construction of Duval and Levenberg and the construction of Wermer, the set having hull without analytic discs is obtained from the boundary of a domain by removing a sequence of subsets, and the presence of nontrivial hull arises because a point of the domain lies in the rational hull of the set that remains if it does not lie in the polynomial hull of the set removed.  Furthermore, the reason for this is that each point of the domain lies in the $2$-polynomial hull of the boundary of the domain.  This observation is the motivation for the proofs of the main results in this section.

We first state the results, and then turn to their proofs.
We begin with the generalization of the result of Duval and Levenberg.

\begin{theorem}\label{Duval}
Let $\Sigma\subset \CN$ be a compact set, and let $k\geq 2$ be an integer.  If $K\subset {\h \Sigma}^k$ is a compact polynomially convex set, then there is a compact subset $X$ of $\Sigma$ such that $\hrk {k-1} X \supset K$ and the set $\h X\setminus K$ contains no analytic discs.  
\end{theorem}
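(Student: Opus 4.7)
The plan is to carry out an inductive construction generalizing that of Duval--Levenberg, exploiting the hypothesis $K\subset\hk k\Sigma$ to make the removal step work for general $k\geq 2$. First I would reduce the conclusion ``$\h X\setminus K$ has no analytic disc'' to countably many conditions. For each positive-integer triple $(m,n,\ell)$, set
\begin{eqnarray*}
\sG_{m,n,\ell}(Y)&=&\bigl\{\sigma:D\to\h Y\text{ analytic}:\ \diam\sigma(\overline{D_{1/2}})\geq\tfrac{1}{\ell}\\
&&\qquad\text{and }\mathrm{dist}\bigl(\sigma(\overline{D_{1-1/m}}),K\bigr)\geq\tfrac{1}{n}\bigr\}.
\end{eqnarray*}
Any analytic disc $\sigma:D\to\h Y\setminus K$ is nonconstant (so has positive diameter on $\overline{D_{1/2}}$) and has $\sigma(\overline{D_r})$ at positive distance from $K$ for some $r<1$, so it belongs to some $\sG_{m,n,\ell}(Y)$. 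Thus it suffices to produce a compact $X\subset\Sigma$ with $K\subset\hrk{k-1}X$ and $\sG_{m,n,\ell}(X)=\emptyset$ for every $(m,n,\ell)$.

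Enumerating the triples as $\{(m_j,n_j,\ell_j)\}_{j\geq 1}$, I would build a nested sequence of compact sets $\Sigma=\Sigma_0\supset\Sigma_1\supset\cdots$ with $K\subset\hk k\Sigma_j$ and $\sG_{m_j,n_j,\ell_j}(\Sigma_j)=\emptyset$ for each $j$. Setting $X=\bigcap_j\Sigma_j$, Lemma~\ref{hullintersection} gives $K\subset\hk k X\subset\hrk{k-1}X$, and the monotonicity $\h X\subset\h{\Sigma_j}$ yields $\sG_{m_j,n_j,\ell_j}(X)\subset\sG_{m_j,n_j,\ell_j}(\Sigma_j)=\emptyset$ for every $j$, so $\h X\setminus K$ contains no analytic disc. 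Since $X\subset\Sigma$ is compact, this gives the conclusion of the theorem.

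The main obstacle is the inductive step itself: given a compact $Y\subset\CN$ with $K\subset\hk k Y$ and a triple $(m,n,\ell)$, produce $Y'\subset Y$ with $K\subset\hk k Y'$ and $\sG_{m,n,\ell}(Y')=\emptyset$ by removing only a small open piece of $Y$. The hypothesis $K\subset\hk k Y$ supplies the essential flexibility: for every analytic subvariety $V$ of pure codimension $\leq k-1$ through a point $z\in K$, the strong condition $z\in\h{Y\cap V}$ says $Y\cap V$ is polynomially substantial enough to tolerate a small removal while still carrying $z$ in its polynomial hull. To destroy every $\sigma\in\sG_{m,n,\ell}(Y)$, I would fix a finite $\tfrac{1}{4\ell}$-net $\{p_1,\ldots,p_r\}$ of potential centers $\sigma(0)$ in the compact set $\{w\in\CN:\mathrm{dist}(w,K)\geq\tfrac{1}{n}\}\cap\h Y$, and for each $p_i\in\h Y\setminus K$ use polynomial convexity of $K$ to find a polynomial $P_i$ with $|P_i(p_i)|>\|P_i\|_K$; then remove from $Y$ a thin open subset supported where $|P_i|$ is sufficiently large, thereby forcing $p_i\notin\h{Y'}$. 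A diameter/normal-families argument promotes this finite destruction of centers to destruction of every member of $\sG_{m,n,\ell}(Y)$, since a hypothetical surviving $\sigma$ has $\sigma(0)$ close to some $p_i$ and $\sigma(\overline{D_{1/2}})$ connected and of diameter $\geq 1/\ell$. The delicate point, and the main novelty versus the case $k=2$ of Duval--Levenberg, is to balance the size of the removal against the family of all codimension-$\leq k-1$ subvarieties $V$ through points of $K$ so that $K\subset\hk k Y'$ is preserved; this is precisely where the $\hk k$-hypothesis (as opposed to the weaker $\hrk{k-1}$-hypothesis used in Wermer's rational analogue) does its essential work.
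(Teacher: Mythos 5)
There are two genuine gaps, and together they mean the proof does not go through as outlined.

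First, the inductive step is exactly the content of the theorem, and you have not supplied it. You propose to maintain the invariant $K\subset\hk k{\Sigma_j}$ through infinitely many removals, and you correctly identify preserving this as the delicate point, but you offer no mechanism for it. The difficulty is real: the relevant tool (the paper's Lemma~\ref{foundation}) shows that if you cut $\Sigma$ down to $X=\{\Re p\leq 0\}\cap\Sigma$ with $\Re p<0$ on $K$, you retain only $K\subset\hrk {k-1}X$, which in the hierarchy of hulls is strictly weaker than $K\subset\hk kX$; so the invariant you need to run the next step of your induction is already lost after the first step. The paper circumvents this entirely: each set $X_j=\{\Re f_j\leq 0\}\cap\Sigma$ is a single polynomial cut of the \emph{original} $\Sigma$ (the cuts are arranged to be nested by choosing $f_{j+1}$, via Kallin's lemma and Lemma~\ref{Stout-lemma}, to separate $K$ from $\{\Re f_j\geq 0\}\cap\h\Sigma$ together with the next zero set), so the hypothesis $K\subset\hk k\Sigma$ is invoked only for $\Sigma$ itself, once for each $j$, and Lemma~\ref{hullintersection} then gives $K\subset\hrk {k-1}X$ for the intersection. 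Without some such device your induction cannot proceed past the first stage.

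Second, the disc-destruction scheme is flawed. Arranging that finitely many net points $p_1,\ldots,p_r$ lie outside $\h {Y'}$ does not prevent an analytic disc in $\h {Y'}\setminus K$ from being centered at some \emph{other} point of $\h {Y'}$ within $1/(4\ell)$ of a $p_i$: the image of such a disc lies in $\h {Y'}$ and need never approach any excluded point, and you have no lower bound on the size of the neighborhood of $p_i$ excluded from $\h {Y'}$ by your removal relative to the mesh of the net. The paper's argument avoids centers and nets altogether: it arranges that a countable \emph{dense} subset $\{p_j\}$ of $P(\Sigma)$ consisting of polynomials zero-free on $K$ becomes zero-free on all of $\h X$; then for any analytic disc $\sigma$ in $\h X\setminus K$, rational convexity of $K$ produces a polynomial $p$ vanishing at $\sigma(0)$ and zero-free on $K$, and, after a small perturbation making $\bigl(\partial(p\circ\sigma)/\partial z\bigr)(0)\neq 0$, Rouch\'e's theorem forces some $p_j$ to vanish on $\sigma(D)\subset\h X$, a contradiction. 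You would need to replace your net and normal-families step by an argument of this kind.
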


The next result is a modification yielding a stronger conclusion from a stronger hypothesis.  This generalizes \cite[Theorem~4.1]{Izzo}.

\begin{theorem}\label{Duval-dense}
Let $\Sigma\subset \CN$ be a compact set, and let $k\geq 2$ be an integer.  If $K\subset {\h \Sigma}^k$ is a compact polynomially convex set, and if the set of polynomials that are zero-free on $K$ is dense in $P(\Sigma)$, then there is a compact subset $X$ of $\Sigma$ such that $\hrk {k-1} X \supset K$ and $P(X)$ has dense invertibles.
\end{theorem}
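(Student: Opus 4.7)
The plan is to refine the construction underlying Theorem~\ref{Duval}. First, using the density hypothesis, I would fix an enumeration $\{p_n\}_{n\geq 1}$ of a countable dense subset of $P(\Sigma)$ consisting of polynomials each zero-free on $K$, and set $m_n:=\min_{z\in K}|p_n(z)|>0$. The aim is to construct a compact $X\subset \Sigma$ satisfying $K\subset \hrk{k-1}{X}$ together with $X\subset\{|p_n|\geq\delta_n\}$ for every $n$ and some $\delta_n\in(0,m_n)$ to be chosen along the way. The second condition forces each $p_n$ to be invertible in $P(X)$; since $\{p_n|_X\}$ inherits density in $P(X)$ from the density of $\{p_n\}$ in $P(\Sigma)$ (using $\|\cdot\|_X\leq\|\cdot\|_\Sigma$ and the fact that polynomials are dense in $P(X)$), $P(X)$ will then have dense invertibles.

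To construct $X$, I would build a decreasing sequence $\Sigma=X_0\supset X_1\supset X_2\supset\cdots$ by interleaving the Duval--Levenberg-style removals from the proof of Theorem~\ref{Duval}---which destroy candidate analytic discs while preserving $K\subset \hk{k}{X_n}$ at each finite stage---with an additional constraint at stage $n$ that $X_n\subset\{|p_n|\geq\delta_n\}$ for a suitably small $\delta_n$. Setting $X=\bigcap_n X_n$ and invoking Lemma~\ref{hullintersection}, one obtains $K\subset \hrk{k-1}{X}$ provided $K\subset \hrk{k-1}{X_n}$ at every finite stage. This in turn follows from the polynomial maximum principle: for any $z\in K$ and any analytic subvariety $V$ of $\C^N$ of pure codimension $\leq k-1$ passing through $z$, the inclusion $z\in\h{V\cap X_{n-1}}$ (provided by the induction hypothesis $K\subset \hk{k}{X_{n-1}}$) yields $\|p_n\|_{V\cap X_{n-1}}\geq|p_n(z)|\geq m_n>\delta_n$, so $V\cap X_n=V\cap X_{n-1}\cap\{|p_n|\geq\delta_n\}$ is nonempty.

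The main obstacle is that the additional trim by $\{|p_n|\geq\delta_n\}$ can destroy the stronger containment $K\subset \hk{k}{X_n}$ that is needed for the next Duval--Levenberg step (simple one-variable examples, e.g. an arc cut out of the unit circle in $\C$, show that removing a relatively open piece from a polynomial hull can eject an interior point from the hull of what remains). The resolution I would pursue is to dovetail the two kinds of removal: at stage $n$, absorb the trim into the Duval--Levenberg removal itself by choosing the open set removed at that stage so as also to cover $X_{n-1}\cap\{|p_n|<\delta_n\}$ for sufficiently small $\delta_n$. Since each $p_n$ is zero-free on $K$, the extra neighborhood lies in a fixed positive distance from $K$ for $\delta_n$ small, and the flexibility of the construction underlying Theorem~\ref{Duval}---which permits arbitrary small open removals disjoint from $K$---should accommodate this extra demand. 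This mirrors, in the generalized $k$-hull setting, the strengthening in \cite[Theorem~4.1]{Izzo} of the original Duval--Levenberg construction.
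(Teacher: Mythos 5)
There is a genuine gap, and it sits at the heart of your invertibility mechanism. The condition $X\subset\{|p_n|\geq\delta_n\}$ does \emph{not} force $p_n$ to be invertible in $P(X)$: invertibility in $P(X)$ means that $p_n$ is zero-free on the maximal ideal space $\h X$, and a polynomial can be bounded away from zero on $X$ while vanishing on $\hh X$ (take $X$ the unit circle in $\C$ and $p_n(z)=z$). For the same reason, trimming $X_{n-1}$ by $\{|p_n|\geq\delta_n\}$, or arranging that the removed open set covers $X_{n-1}\cap\{|p_n|<\delta_n\}$, is aimed at the wrong target: what must be excised is the zero set of $p_n$ inside the \emph{hull}, i.e.\ $Z_n=\h\Sigma\cap p_n^{-1}(0)$, and the excision must be of a form that persists when one passes from $X_n$ to $\h{X_n}$. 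A sublevel set $\{|p_n|\geq\delta_n\}$ does not have this persistence (the hull of an annulus fills in the hole).

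The paper's Lemma~\ref{keylemma} repairs exactly this by cutting with half-spaces of $\Re f_n$ for auxiliary polynomials $f_n$: one chooses $f_n$ with $\Re f_n<-1$ on $K$ and $\Re f_n>1$ on $Z_n$, and sets $X_n=\{\Re f_n\leq 0\}\cap\Sigma$; then $\h{X_n}\subset\{\Re f_n\leq 0\}$ automatically misses $Z_n$, so each $p_n$ is zero-free on $\h X$ and hence invertible in $P(X)$. Your ``main obstacle'' (iterated removals destroying the containment $K\subset\hk k{X_n}$ needed to continue) is also resolved differently from what you propose: the paper never tries to preserve the $k$-polynomial hull containment for the intermediate sets. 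Instead each $X_n$ is a \emph{single} cut of the original $\Sigma$, so Lemma~\ref{foundation} applies directly to give $\hrk{k-1}{X_n}\supset\{\Re f_n\leq 0\}\cap\hk k\Sigma\supset K$; nestedness $X_{n+1}\subset X_n$ is forced by additionally requiring $\Re f_{n+1}>1$ on $L_n=\{\Re f_n\geq 0\}\cap\h\Sigma$, and the separating polynomial $f_{n+1}$ exists because $K\cup L_n\cup Z_{n+1}$ is polynomially convex (Kallin's lemma plus Lemma~\ref{Stout-lemma}). Lemma~\ref{hullintersection} then passes the containment to $X=\bigcap X_n$. Note also that no separate ``disc-destroying'' removals are interleaved: removing the $Z_n$ for a dense family $\{p_n\}$ is the whole construction, since dense invertibles already rules out analytic discs.
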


Taking for $K$ a one-point set yields immediately the following.

\begin{corollary}\label{gen-cor}
Let $\Sigma\subset \CN$ be any compact set with nontrivial $k$-polynomial hull with $k\geq 2$.  Then $\Sigma$ contains a compact subset $X$ with nontrivial $(k-1)$-rational hull and such that $P(X)$ has dense invertibles. 
\end{corollary}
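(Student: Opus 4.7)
The plan is to deduce the corollary as a direct application of Theorem~\ref{Duval-dense} with $K$ chosen to be a single point in $\hk k\Sigma\setminus\Sigma$.

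First I would use the nontriviality hypothesis to pick a point $z_0\in \hk k\Sigma\setminus\Sigma$ and set $K=\{z_0\}$. This $K$ is obviously compact, and every finite set is polynomially convex (indeed every singleton is), so $K$ is polynomially convex. By construction $K\subset \hk k\Sigma$, so two of the three hypotheses of Theorem~\ref{Duval-dense} are in place.

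Next I would verify the remaining hypothesis, namely that the polynomials zero-free on $K$ are dense in $P(\Sigma)$. Given $f\in P(\Sigma)$ and $\vep>0$, choose a polynomial $p$ with $\|p-f\|_\Sigma<\vep/2$. If $p(z_0)\neq 0$ we are done; otherwise replace $p$ by $p+\delta$ for any scalar $\delta$ with $0<|\delta|<\vep/2$, so that $(p+\delta)(z_0)=\delta\neq 0$ and $\|(p+\delta)-f\|_\Sigma<\vep$. Thus the hypothesis holds.

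Applying Theorem~\ref{Duval-dense} then yields a compact subset $X\subset \Sigma$ such that $\hrk{k-1}X\supset K=\{z_0\}$ and $P(X)$ has dense invertibles. Since $z_0\notin\Sigma\supset X$, the point $z_0$ witnesses that $\hrk{k-1}X\setminus X$ is nonempty, i.e., $\hrk{k-1}X$ is nontrivial, as required. There is no real obstacle here: the only step that is not a trivial unwinding of definitions is the density verification, and that is a one-line perturbation argument because $K$ is a single point disjoint from $\Sigma$.
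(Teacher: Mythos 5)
Your proof is correct and follows exactly the paper's route: the paper obtains this corollary by applying Theorem~\ref{Duval-dense} with $K$ a one-point subset of $\hk k\Sigma\setminus\Sigma$, and you have simply spelled out the (easy) verification of the density hypothesis that the paper treats as immediate.
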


The following special case is the case of greatest interest.

\begin{corollary}\label{main-cor}
Let $\Sigma\subset \CN$ be any compact set with nontrivial $2$-polynomial hull.  Then $\Sigma$ contains a compact subset $X$ such that $\h X$ is nontrivial and $P(X)$ has dense invertibles. 
\end{corollary}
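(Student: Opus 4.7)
The plan is to read off Corollary~\ref{main-cor} as the $k=2$ specialization of Corollary~\ref{gen-cor}, which itself is the one-point-$K$ case of Theorem~\ref{Duval-dense}. No additional argument beyond a short comparison of hulls is required, so the exposition amounts to verifying hypotheses and then invoking the inclusion chain among $k$-hulls.

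Concretely, the hypothesis supplies a point $z_0 \in \hk 2\Sigma \setminus \Sigma$. Taking $K = \{z_0\}$, the set $K$ is trivially polynomially convex. The density-of-zero-free-polynomials hypothesis of Theorem~\ref{Duval-dense} is also immediate: any $f \in P(\Sigma)$ is a uniform limit of polynomials $p_n$, and the perturbations $p_n + \vep_n$ with $\vep_n \ne -p_n(z_0)$ and $\vep_n \to 0$ still converge to $f$ while being nonzero at $z_0$. Applying Theorem~\ref{Duval-dense} with $k = 2$ therefore produces a compact $X \subset \Sigma$ with $\hrk 1 X \supset \{z_0\}$ and with $P(X)$ having dense invertibles.

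It remains to upgrade \emph{nontrivial rational hull} to \emph{nontrivial polynomial hull}. This is immediate from the inclusion chain $\h X \supset \hr X = \hrk 1 X$ recorded in Section~\ref{definitions}: since $z_0 \notin \Sigma \supset X$ but $z_0 \in \hrk 1 X \subset \h X$, the polynomial hull $\h X$ is nontrivial. The entire content of the corollary has already been packaged into Theorem~\ref{Duval-dense}, so there is no genuine obstacle at this stage; the only conceptual point to flag is that working with the $2$-polynomial hull (rather than the ordinary polynomial hull) of $\Sigma$ is exactly what is needed to secure \emph{simultaneously} the density of invertibles in $P(X)$ and the nontriviality of $\h X$.
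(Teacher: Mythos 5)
Your proposal is correct and follows exactly the paper's route: Corollary~\ref{main-cor} is obtained as the $k=2$ case of Theorem~\ref{Duval-dense} applied to the polynomially convex one-point set $K=\{z_0\}$ with $z_0\in \hk 2\Sigma\setminus\Sigma$, after which the inclusion $\hrk 1X=\hr X\subset \h X$ upgrades the nontrivial rational hull to a nontrivial polynomial hull. Your explicit verification of the density of zero-free polynomials (which the paper leaves as ``immediate'') is a welcome, if minor, addition.
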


We also have analogues of Theorems~\ref{Duval} and~\ref{Duval-dense} with the polynomially convex set $K$ replaced by a rationally convex set.  These generalize \cite[Theorems~1.8 and~4.5]{Izzo}

\begin{theorem}\label{rat-Duval}
Let $\Sigma\subset \CN$ be a compact set, and let $k\geq 2$ be an integer.  If $K\subset {\h \Sigma}^k$ is a compact rationally convex set, then there is a compact subset $X$ of $\Sigma$ such that $\hrk {k-1} X \supset K$ and the set $\hr X\setminus K$ contains no analytic discs.
\end{theorem}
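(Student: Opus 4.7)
The plan is to mirror the proof of Theorem~\ref{Duval} essentially verbatim, substituting the rational hull for the polynomial hull in the disc-destroying conclusion and making use of the rational convexity of $K$ in place of its polynomial convexity. I would construct $X$ as the intersection of a decreasing sequence of compact sets $\Sigma = \Sigma_0 \supset \Sigma_1 \supset \cdots$ produced by successively removing small relatively open pieces, each removal chosen to prevent one candidate analytic disc from surviving in the rational hull while preserving the condition $K \subset \hk k \Sigma_n$.

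First I would fix the countable data: a countable base $\{B_j\}$ of open balls in $\CN$ with $\overline{B_j}\cap K = \emptyset$, and a countable family $\{\sigma_i\}$ of maps holomorphic on a neighborhood of $\overline D$ that is dense in the uniform norm among all such maps. The hypothesis that $K$ is rationally convex enters here, just as polynomial convexity of $K$ enters the proof of Theorem~\ref{Duval}: for each $B_j$ it supplies a polynomial $q_j$ with a zero inside $B_j$ and no zeros on $K$, whose sublevel set $\{|q_j|\leq \epsilon_j\}$ can be used to confine the piece to be removed safely within $\CN\setminus K$. Enumerate the pairs $(B_j,\sigma_i)$ into a single sequence $\{(B_n,\sigma_n)\}$ in which each pair appears infinitely often.

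Next I would construct $\Sigma_n$ inductively. At stage $n$, if $\sigma_n(\overline D)\cap B_n\neq\emptyset$ and $\sigma_n(\overline D)\subset \hr \Sigma_{n-1}$, I pick $\zeta_n\in\overline D$ with $\sigma_n(\zeta_n)\in B_n$ and remove a sufficiently small relatively open piece of $\Sigma_{n-1}$ near $\sigma_n(\zeta_n)$ to force $\sigma_n(\zeta_n)\notin \hr \Sigma_n$; otherwise I set $\Sigma_n=\Sigma_{n-1}$. The removal is made small enough to preserve $K\subset \hk k \Sigma_n$: for each $z\in K$ and each pure codimension--$\leq(k{-}1)$ subvariety $V$ through $z$, the inductive hypothesis gives $z\in\widehat{\Sigma_{n-1}\cap V}$, and a sufficiently small Hausdorff perturbation of $\Sigma_{n-1}\cap V$ supported away from $K$ preserves $z$ in the polynomial hull, uniformly in $V$ by a compactness argument on the relevant family of subvarieties. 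Lemma~\ref{hullintersection} then gives $K\subset \hk k X \subset \hrk{k-1} X$ for $X=\bigcap_n \Sigma_n$.

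Finally, if $\hr X\setminus K$ contained an analytic disc $\tau$, then after restricting to a slightly smaller disc and rescaling we would get $\tau$ holomorphic on a neighborhood of $\overline D$ with image in $\hr X\setminus K$ meeting some $B_j$; Lemma~\ref{hullintersection} gives $\tau(\overline D)\subset \hr X=\bigcap_n\hr\Sigma_n$, and density of $\{\sigma_i\}$ combined with the same stability properties used in the proof of Theorem~\ref{Duval} yields some $\sigma_i$ uniformly close to $\tau$ that still satisfies $\sigma_i(\overline D)\cap B_j\neq\emptyset$ and $\sigma_i(\overline D)\subset \hr\Sigma_{n-1}$ for the stage $n$ at which $(B_j,\sigma_i)$ is treated, so the construction produces $\sigma_i(\zeta_n)\notin \hr\Sigma_n\supset \hr X$, contradicting $\sigma_i(\overline D)\subset \hr X$. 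The main obstacle is the preservation of $K\subset \hk k \Sigma_n$ across the inductive step, which demands uniform control of how much may be removed without expelling any $z\in K$ from $\widehat{\Sigma_n\cap V}$ for any pure codimension--$\leq(k{-}1)$ subvariety $V$ through $z$; this is the same technical heart as in Theorem~\ref{Duval}, handled by the semicontinuity of polynomial hulls under Hausdorff perturbation together with an approximation argument on the family of relevant subvarieties.
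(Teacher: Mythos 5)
Your architecture as written has two genuine gaps, both at the points you yourself flag as ``the same technical heart as in Theorem~\ref{Duval}.'' The first is the preservation of $K$ in the hull across removals. You invoke ``semicontinuity of polynomial hulls under Hausdorff perturbation'' to claim that a sufficiently small deletion from $\Sigma_{n-1}$ supported away from $K$ keeps $z\in \widehat{\Sigma_{n-1}\cap V}$, uniformly over the subvarieties $V$. Polynomial hulls are upper, not lower, semicontinuous under deletion: removing an arbitrarily small relatively open piece of a set can collapse its hull entirely (delete a tiny arc from a circle in $\C$), and the family of pure codimension $\leq k-1$ subvarieties through a point admits no compactness that would give the claimed uniformity. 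The paper never attempts to propagate $K\subset \hk k{\Sigma_n}$ through an induction. Instead, every cut is a polynomial sublevel set taken relative to the \emph{original} $\Sigma$ (in Lemma~\ref{keylemma}, $X_j=\{\Re f_j\leq 0\}\cap\Sigma$, with Kallin's lemma keeping $K\cup L_n$ polynomially convex so that the next separating polynomial exists; in Lemma~\ref{rat-keylemma}, $X=\Sigma\setminus\bigcup\{|p_j|<r_j\}$), and Lemma~\ref{foundation} (resp.\ Lemma~\ref{rat-foundation}) converts the single hypothesis $K\subset \hk k\Sigma$ into $K\subset \hrk{k-1}X$: if a subvariety $V$ through $z_0\in K$ misses $X$, then $V\cap\Sigma$ is covered by the removed sets, and $k$-polynomial convexity pushes $z_0$ into the polynomial hull of (the closure of) the removed part, which is arranged to be disjoint from $K$.

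The second gap is the disc-killing scheme. Enumerating a dense family $\{\sigma_i\}$ of discs does not close: if $\tau$ is an analytic disc in $\hr X\setminus K$, a uniform approximant $\sigma_i$ has image only in a small neighborhood of $\hr X$, not in $\hr{\Sigma_{n-1}}$, so at the stage where $(B_j,\sigma_i)$ is treated your hypothesis $\sigma_i(\overline D)\subset \hr{\Sigma_{n-1}}$ will generally fail and your construction removes nothing. The paper kills all discs simultaneously without ever approximating them: Lemma~\ref{rat-keylemma} is applied to a countable set $\{p_j\}$ of polynomials zero-free on $K$ that is dense in $P(\Sigma)$, producing $X$ with $\hrk{k-1}X\supset K$ and $\hr X\cap Z_j=\emptyset$ for all $j$. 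Given a disc $\sigma$ in $\hr X\setminus K$, rational convexity of $K$ yields a polynomial $p$ with $p(\sigma(0))=0$ and $p$ zero-free on $K$; after perturbing so that $\bigl(\partial(p\circ\sigma)/\partial z\bigr)(0)\neq 0$, Rouch\'e's theorem forces some $p_j$ to vanish on $\sigma(D)\subset \hr X$, a contradiction. You should restructure your proof around these two mechanisms rather than around Hausdorff stability and disc enumeration.
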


\begin{theorem}\label{rat-Duval-dense}
Let $\Sigma\subset \CN$ be a compact set, and let $k\geq 2$ be an integer.  If $K\subset {\h \Sigma}^k$ is a compact rationally convex set, and if the set of polynomials that are zero-free on $K$ is dense in $P(\Sigma)$, then there is a compact subset $X$ of $\Sigma$ such that $\hrk {k-1} X \supset K$, such that  $R(X)$ has dense invertibles, and such that there is a dense subset of $P(X)$ consisting of polynomials each of which is zero-free on $X$.
\end{theorem}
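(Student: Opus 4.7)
The plan is to mirror the construction used for Theorem~\ref{Duval-dense}, with rational convexity of $K$ in place of polynomial convexity. Since, by hypothesis, the polynomials zero-free on $K$ are dense in $P(\Sigma)$, I fix a countable sequence $\{p_n\}_{n \geq 1}$ of such polynomials that is itself dense in $P(\Sigma)$, together with a summable sequence $\vep_n \searrow 0$. The goal is then to construct a decreasing chain $\Sigma = X_0 \supset X_1 \supset X_2 \supset \cdots$ of compact subsets of $\Sigma$, along with polynomials $\tilde p_n$ satisfying $\|\tilde p_n - p_n\|_\Sigma < \vep_n$, such that (i) $\tilde p_n$ is bounded away from zero on $X_n$, and (ii) the invariant $K \subset \hk k {X_n}$ is preserved at every stage.

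At the $n$th step I would first perturb $p_n$ within $\vep_n$ to a polynomial $\tilde p_n$ that is zero-free on $K$, letting $m_n = \min_K |\tilde p_n| > 0$, then choose $\delta_n \in (0, m_n)$ and set
$$X_n := X_{n-1} \setminus \{z \in \CN : |\tilde p_n(z)| < \delta_n\}.$$
By construction $|\tilde p_n| \geq \delta_n$ on $X_n$, so $\tilde p_n$ is zero-free on $X_n$ (and hence on $X := \bigcap_n X_n$). Taking $X$ to be this intersection, Lemma~\ref{hullintersection} applied to the chain $\{X_n\}$ yields $\hk k X = \bigcap_n \hk k {X_n} \supset K$, and in particular $\hrk{k-1} X \supset K$. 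Because $\{p_n\}$ is dense in $P(\Sigma)$ and $\|\cdot\|_X \leq \|\cdot\|_\Sigma$, the sequence $\{\tilde p_n\}$ is dense in the polynomials in $P(X)$-norm and consists entirely of polynomials zero-free on $X$, giving the dense set of zero-free polynomials in $P(X)$. Dense invertibility of $R(X)$ follows at once: any rational function of the form $p/q$ with $q$ polynomial and nonvanishing on $X$ can be approximated by $\tilde p_m/q$, which is invertible in $R(X)$; since such rational functions are dense in $R(X)$, the invertibles of $R(X)$ are dense.

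The main obstacle is the choice of $\delta_n$ in such a way that $K \subset \hk k {X_n}$ is maintained. Fix $z \in K$ and a pure codimension $\leq k-1$ analytic subvariety $V$ of $\CN$ passing through $z$; the inductive hypothesis gives $z \in \h{V \cap X_{n-1}}$, and we need $z \in \h{V \cap X_n}$ after deleting the sublevel set $\{|\tilde p_n| < \delta_n\}$. The point is that $|\tilde p_n(z)| \geq m_n > \delta_n$, so $z$ is not itself near the sublevel set; a maximum-principle (equivalently, Jensen measure) argument of the type used in the proof of Theorem~\ref{Duval-dense} and in \cite{Izzo} shows that for $\delta_n$ small enough the removal does not disturb membership of $z$ in the polynomial hull of $V \cap X_{n-1}$, uniformly in $V$. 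The one genuinely new feature compared to Theorem~\ref{Duval-dense} is the use of \emph{rational} convexity of $K$ at the stage where $\tilde p_n$ is produced: rational convexity is precisely what is needed to guarantee that a polynomial approximating $p_n$ and zero-free on $K$ exists and has $m_n > 0$, so that a positive $\delta_n$ below $m_n$ is available.
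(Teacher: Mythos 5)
Your overall construction --- deleting small sublevel sets $\{|p_j|<\delta_j\}$ of a countable dense sequence of polynomials zero-free on $K$, and deducing dense invertibility of $R(X)$ from the resulting dense family of zero-free polynomials in $P(X)$ --- has the same shape as the paper's argument (Lemma~\ref{rat-keylemma} combined with Lemma~\ref{poly-to-rat}). One small misattribution: the perturbation step is unnecessary, since the hypothesis already provides a dense set of polynomials zero-free on $K$, so you may take the $p_n$ themselves to be zero-free on $K$; rational convexity of $K$ is not what produces them.

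The genuine gap is in the key step, where you assert that for $\delta_n$ small enough the deletion of $\{|\tilde p_n|<\delta_n\}$ ``does not disturb membership of $z$ in the polynomial hull of $V\cap X_{n-1}$, uniformly in $V$.'' No such stability statement is true, and no maximum-principle or Jensen-measure argument of this kind occurs in the proof of Theorem~\ref{Duval-dense}, which instead separates polynomially convex sets by half-planes via Kallin's lemma. A counterexample: let $Y=\partial D\subset\C$, $z=0\in\h Y$, and $p(w)=w-1$, so $|p(z)|=1$; yet for every $\delta>0$ the set $Y\setminus\{|p|<\delta\}$ is a proper closed subarc of the circle, hence polynomially convex, so $z\notin \h{Y\setminus\{|p|<\delta\}}$. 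Thus the invariant $K\subset\hk k{X_n}$ cannot be propagated, and indeed the Duval--Levenberg mechanism necessarily loses one level of hull: a point survives in the $(k-1)$-\emph{rational} hull of what remains precisely because it is not in the \emph{polynomial} hull of what was removed. The paper's Lemma~\ref{rat-foundation} implements this: if $z_0\in\hk k\Sigma$ fails to lie in $\hrk {k-1}X$ for the final set $X$, then some subvariety $V$ of codimension $\leq k-1$ through $z_0$ misses $X$, so $V\cap\Sigma$ is covered by finitely many of the deleted sets $H_j$, whence $z_0\in\widehat{\overline{H_1}\cup\cdots\cup\overline{H_n}}$ for some $n$. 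Accordingly the radii must be chosen inductively, as in \cite[Lemma~3.5]{Izzo}, so that these polynomial hulls of finite unions of closed sublevel sets avoid $K$ --- this is where the rational convexity of $K$ is actually used --- rather than merely satisfying $\delta_n<\min_K|\tilde p_n|$. Your argument is repaired by replacing the stability claim and the invariant $K\subset\hk k{X_n}$ with this mechanism.
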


\begin{remark}
Theorems~\ref{Duval}, \ref{Duval-dense}, \ref{rat-Duval}, and~\ref{rat-Duval-dense} can be generalized by requiring $K$ only to be $l$-polynomially convex (or $l$-rationally convex, as appropriate) for some $1\leq l\leq k$ and weakening the conclusion to the assertion that $\hk lX\setminus K$ (or $\hrk lX\setminus K$) contains no analytic discs.  The interested reader is invited to verify this while reading the proofs of the theorems.
\end{remark}

We begin the proofs of the above theorems with several lemmas.  

\begin{lemma} \label{pkconvex}
Let $X$ be a $k$-polynomially convex set in $\C^N$, let $Y$ be a polynomially convex set in the plane, and let $f$ be a polynomial on $\C^N$.  Then $f^{-1}(Y)\cap X$ is $k$-polynomially convex in $\C^N$.
\end{lemma}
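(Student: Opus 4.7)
The plan is to mimic the classical proof that $f^{-1}(Y) \cap X$ is polynomially convex when $X$ is and $Y \subset \C$ is, and to observe that the same recipe transports to the $k$-hull setting because $V = \C^N$ itself is an admissible test variety in the definition of $\hk{k}{\cdot}$ whenever $k \geq 2$. (The case $k = 1$ is the classical result and may be taken as known.)

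First I would use the polynomial convexity of $Y$ to write $Y = \bigcap_\alpha \{w \in \C : |p_\alpha(w)| \leq c_\alpha\}$ for some family of one-variable polynomials $p_\alpha$. Pulling back by $f$,
$$ f^{-1}(Y) \cap X = \bigcap_\alpha W_\alpha, \qquad W_\alpha = X \cap \{z \in \C^N : |q_\alpha(z)| \leq c_\alpha\}, $$
where each $q_\alpha = p_\alpha \circ f$ is a polynomial on $\C^N$. It is therefore enough to prove two things: (i) for any polynomial $q$ on $\C^N$ and any $c \geq 0$, the compact set $X \cap \{|q| \leq c\}$ is $k$-polynomially convex; and (ii) any intersection of $k$-polynomially convex compact sets is $k$-polynomially convex. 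Statement (ii) is immediate from the monotonicity $A \subseteq B \Rightarrow \hk{k}{A} \subseteq \hk{k}{B}$, which is a one-line induction on $k$ using monotonicity of $\hrk{k-1}{\cdot}$ and of the ordinary polynomial hull: given $k$-polynomially convex sets $K_\alpha$ with intersection $K$, monotonicity forces $\hk{k}{K} \subseteq \hk{k}{K_\alpha} = K_\alpha$ for every $\alpha$, hence $\hk{k}{K} = K$.

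For (i), assume $k \geq 2$, set $W = X \cap \{|q| \leq c\}$, and take $z \in \hk{k}{W}$. Monotonicity gives $z \in \hk{k}{X} = X$. Since $V := \C^N$ has pure codimension $0 \leq k - 1$, it is admissible in the definition of $\hk{k}{W}$; that definition therefore forces $z \in \widehat{W \cap \C^N} = \widehat{W}$, and hence $|q(z)| \leq \|q\|_W \leq c$. Combining these gives $z \in X \cap \{|q| \leq c\} = W$, which establishes (i). I do not anticipate any real obstacle --- the argument is essentially an unwinding of the definitions. The only conceptual point worth highlighting is that the classical \emph{approximate-$Y$-by-sublevel-sets} technique transfers verbatim to $k$-polynomial hulls precisely because the ambient space $\C^N$ qualifies as a pure codimension-$0$ test variety in the definition of $\hk{k}{\cdot}$.
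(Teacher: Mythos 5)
Your proof is correct and is essentially the paper's argument: both hinge on the containments $\hk kW\subset \hk kX=X$ and $\hk kW\subset \h W$ for $W=f^{-1}(Y)\cap X$, together with the fact that polynomial convexity of $Y$ supplies one-variable polynomials $p$ whose pullbacks $p\circ f$ are polynomials on $\C^N$. The paper merely packages this as a direct pointwise separation (a point $x_0\in X\setminus f^{-1}(Y)$ satisfies $|p(f(x_0))|>\|p\|_Y$ for some $p$, so $p\circ f$ excludes it from the ordinary polynomial hull of $W$) rather than via your decomposition of $Y$ into an intersection of sublevel sets.
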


\begin{proof}
Let $x_0\in \C^N\setminus (f^{-1}(Y)\cap X)$ be arbitrary.  We are to show that $x_0\notin (f^{-1}(Y)\cap X)\endhatk$.  Since the desired conclusion is obvious when $x_0\notin \hk kX$, we may assume that $x_0\in X$.  Then $x_0\notin f^{-1}(Y)$.  Since $Y$ is polynomially convex, there is then a polynomial $p$ on $\C$ such that $|p(f(x_0))|> \|p\|_Y$.  But then $|(p\circ f)(x_0)| > \|p\circ f\|_{f^{-1}(Y)}$.  Since $p\circ f$ is a polynomial on $\C^N$, this shows that $x_0\notin (f^{-1}(Y)\cap X)\endhat \supset (f^{-1}(Y)\cap X)\endhatk$.
\end{proof}

The next result is the foundation for the proofs of Theorems~\ref{Duval} and~\ref{Duval-dense}.

\begin{lemma} \label{foundation}
Let $\Sigma\subset\C^N$ be a compact set, let $p$ be a polynomial on $\C^N$, and let $X=\{\Re p\leq 0\} \cap \Sigma$.  Let $k\geq 2$ be an integer.  Then 
$\{\Re p\leq 0\} \cap \hk {k-1}\Sigma \supset \hk {k-1}X \supset  \hrk {k-1}X \supset
\{\Re p\leq 0\} \cap \hk k\Sigma$.
\end{lemma}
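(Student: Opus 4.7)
The plan is to verify the three inclusions separately. The middle inclusion $\hk{k-1}X\supset\hrk{k-1}X$ is immediate from the chain of inclusions displayed in Section~\ref{definitions}, so the real work lies in the outer two. The key tool for both is the standard fact that polynomial-hull membership controls real parts of polynomials: if $z\in\h K$ for a compact $K\subset\CN$, then
$$\min_{w\in K}\Re q(w)\;\leq\;\Re q(z)\;\leq\;\max_{w\in K}\Re q(w)$$
for every polynomial $q$, as one sees by approximating $e^{\pm q}$ uniformly by polynomials on a ball containing $K\cup\{z\}$ and applying $|\cdot|\leq\|\cdot\|_K$.

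For the leftmost inclusion, I first note that monotonicity $\hk{k-1}X\subset\hk{k-1}\Sigma$ follows directly from the definition, since any subvariety meeting $X$ also meets $\Sigma$ and $\h{X\cap V}\subset\h{\Sigma\cap V}$. For the factor $\{\Re p\leq 0\}$, observe that $\hk{k-1}X\subset\h X$ by the chain in Section~\ref{definitions}, and apply the real-part bound with $q=p$ and $K=X$: since $X\subset\{\Re p\leq 0\}$, every $z\in\hk{k-1}X$ satisfies $\Re p(z)\leq \max_X \Re p\leq 0$.

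For the rightmost inclusion, fix $z\in\{\Re p\leq 0\}\cap\hk k\Sigma$ and let $V$ be any analytic subvariety of $\CN$ of pure codimension at most $k-1$ passing through $z$; I must show $V\cap X\neq\emptyset$. By the defining property of $\hk k\Sigma$ we have $z\in\h{\Sigma\cap V}$. Suppose for contradiction that $V\cap X=\emptyset$; then every $w\in\Sigma\cap V$ satisfies $\Re p(w)>0$, and by compactness $c:=\min_{\Sigma\cap V}\Re p>0$. The real-part lower bound, applied to $K=\Sigma\cap V$ and $q=p$, then forces $\Re p(z)\geq c>0$, contradicting $z\in\{\Re p\leq 0\}$. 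Hence $V\cap X\neq\emptyset$, and since $V$ was arbitrary, $z\in\hrk{k-1}X$.

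I do not expect a serious obstacle; once the real-part bound is in hand, the argument is just bookkeeping from the definitions. The only delicate point is that the minimum $c$ must be defined, which requires $\Sigma\cap V\neq\emptyset$. This is automatic, however: $z\in\hk k\Sigma$ entails $z\in h_r^{k-1}\Sigma$, so every subvariety of pure codimension at most $k-1$ through $z$ meets $\Sigma$.
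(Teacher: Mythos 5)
Your proof is correct and follows essentially the same route as the paper: for the key inclusion $\hrk{k-1}X\supset\{\Re p\leq 0\}\cap\hk k\Sigma$, both arguments take a codimension $\leq k-1$ variety $V$ through $z$ missing $X$, observe by compactness that $\Re p$ is bounded below by some $\alpha>0$ on $\Sigma\cap V$, and contradict $z\in\h{\Sigma\cap V}$. The only (cosmetic) difference is that where the paper invokes Lemma~\ref{pkconvex} to conclude that hulls respect the half-spaces $\{\Re p\gtrless\alpha\}$, you derive the same fact directly from the standard $e^{\pm q}$ real-part estimate.
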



\begin{proof}
First note that for every real number $\alpha$, the sets $\{\Re p\leq \alpha\} \cap \hk {k-1}\Sigma$ and $\{\Re p\geq \alpha\} \cap \hk {k-1}\Sigma$ are 
$(k-1)$-polynomially convex by the preceding lemma.  Therefore,
$$\{\Re p\leq 0\} \cap \hk {k-1}\Sigma \supset \hk {k-1}X \supset  \hrk {k-1}X .$$
To show that $\hrk {k-1}X \supset \{\Re p\leq 0\} \cap \hk k\Sigma$, suppose $z_0$ is a point of $\hk k\Sigma$ such that $z_0\notin \hrk {k-1}X$.   Then there is an analytic subvariety $V$ of $\CN$ of pure codimension $\leq k-1$ passing through $z_0$ and disjoint from $X$.  Then $\Re p>0$ everywhere on the set $V\cap \Sigma$.  Hence there is an $\alpha>0$ such that $V\cap  \Sigma\subset \{\Re p\geq \alpha\} \cap \Sigma$.  Because $z_0$ is in $\hk k\Sigma$, it follows immediately that $z_0$ is in the polynomial hull of $\{\Re p\geq\alpha\} \cap\Sigma$ and hence lies in $\{\Re p\geq\alpha\} \cap \h \Sigma$.  Thus $z_0\notin 
\{\Re p\leq 0\} \cap \hk k\Sigma$.
\end{proof}

Note that if $\hk {k-1}\Sigma = \hk k\Sigma$, then all the inclusions in the conclusion of the lemma are equalities.

For a short proof of the next lemma see \cite[Lemma~1.7.4]{Stout}.  

\begin{lemma}\label{Stout-lemma}
If $X\subset \CN$ is a polynomially convex set, and if $E\subset X$ is polynomially convex, then for every holomorphic function $f$ defined on a neighborhood of $X$, the set $E\cup (X\cap f^{-1}(0))$ is polynomially convex.
\end{lemma}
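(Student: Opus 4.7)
The plan is to write $K = E \cup (X \cap f^{-1}(0))$ and show $\h K \subset K$. Since $K \subset X$ and $X$ is polynomially convex, $\h K \subset \h X = X$ automatically, so it suffices to fix an arbitrary $z_0 \in \h K \setminus E$ and prove that $f(z_0) = 0$, which will place $z_0$ in $X \cap f^{-1}(0) \subset K$.

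Because $E$ is polynomially convex and $z_0 \notin E$, I would pick a polynomial $Q$ with $|Q(z_0)| > \|Q\|_E$; rescaling $q = Q/Q(z_0)$ gives $q(z_0) = 1$, and I set $a = \|q\|_E < 1$ and $M = \|q\|_X$. Since $X$ is polynomially convex and $f$ is holomorphic on a neighborhood of $X$, the Oka--Weil theorem supplies polynomials $p$ with $\|p - f\|_X$ as small as desired.

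The main estimate comes from testing the polynomials $q^n p$ against $K$: on $E$ one has $|q^n p| \leq a^n(\|f\|_E + \|p-f\|_X)$, which decays geometrically in $n$, while on $X \cap f^{-1}(0)$ one has $|q^n p| \leq M^n\|p-f\|_X$ since $f$ vanishes there. Because $|q(z_0)^n p(z_0)| = |p(z_0)|$ and $z_0 \in \h K$, this sandwiches $|p(z_0)|$ between zero and the larger of the two bounds; combining with $|p(z_0) - f(z_0)| \leq \|p-f\|_X$ then forces $f(z_0) = 0$.

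The only delicate point is that $M$ may well exceed $1$, so the two competing bounds pull in opposite directions as $n$ grows. I would handle this by coupling the two approximation parameters in the correct order: first pick $n$ large enough that $a^n(\|f\|_E + 1)$ is small, and only afterwards choose the Oka--Weil approximation $p$ with $\|p - f\|_X$ small compared to $1/M^n$. This is the standard ``polynomial-times-cutoff'' juggling typical in polynomial convexity arguments, and apart from arranging these choices in the right order, every other step is essentially routine.
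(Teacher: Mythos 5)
Your argument is correct. The paper itself gives no proof of this lemma, deferring to Stout's \emph{Polynomial Convexity} (Lemma~1.7.4), and your proof is essentially that standard argument: separate $z_0$ from $E$ by a polynomial $q$ normalized so that $q(z_0)=1$ and $\|q\|_E<1$, and exploit the vanishing of $f$ on the other piece by testing high powers $q^n$ against $\widehat{K}$; your only variation is to first approximate $f$ by a polynomial $p$ via Oka--Weil and test the genuine polynomial $q^np$ (with the choice of $n$ correctly made before the choice of $p$), rather than applying the maximum principle for functions holomorphic near $\widehat{K}\subset X$ directly to $q^nf$.
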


The proof of the next lemma is based on the original argument of Duval and Levenberg \cite{DuvalL} as presented in \cite[Lemma~1.7.5]{Stout} and contains the main technical construction for the proofs of Theorems~\ref{Duval} and~\ref{Duval-dense}.

\begin{lemma} \label{keylemma}
Let $\Sigma\subset \CN$ be a compact set, and let $k\geq 2$ be an integer. If $K\subset \hk k\Sigma$ is a compact polynomially convex set, and if $\{p_j\}_{j=1}^\infty$ is a sequence of polynomials such that each of the sets $Z_j=\h\Sigma\cap \pji(0)$ is disjoint from $K$, then there is a compact subset $X$ of $\Sigma$ such that $\hrk {k-1}X\supset K$ and  $\h X\cap Z_j=\emptyset$ for every $j$.
\end{lemma}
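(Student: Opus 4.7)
The plan is to construct $X$ as the intersection of a decreasing sequence of compact sets $\Sigma = \Sigma_0 \supset \Sigma_1 \supset \cdots$, where at each stage $\Sigma_n = \{\Re q_n \leq 0\} \cap \Sigma_{n-1}$ is obtained by a single halfspace cut. The polynomial $q_n$ should satisfy $\Re q_n < 0$ on $K$ and $\Re q_n > 0$ on $Z_n$. Since $K$ and $Z_n = \h\Sigma \cap p_n^{-1}(0)$ are both polynomially convex (the latter as the intersection of two polynomially convex sets) and disjoint, and since $p_n$ is zero-free on the compact set $K$, a Kallin-type argument gives that $K \cup Z_n$ is polynomially convex; Oka--Weil approximation of the locally constant function equal to $-1$ on $K$ and $+1$ on $Z_n$ then yields the required $q_n$ after a small real shift.

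Set $X = \bigcap_n \Sigma_n$. Because $\{\Re q_n \leq 0\}$ is polynomially convex, $\h{\Sigma_n} \subset \{\Re q_n \leq 0\}$, and hence $\h{\Sigma_n} \cap Z_n = \emptyset$. Applying Lemma~\ref{hullintersection} to the chain $\{\Sigma_n\}$ gives $\h X = \bigcap_n \h{\Sigma_n}$, so $\h X \cap Z_n \subset \h{\Sigma_n} \cap Z_n = \emptyset$ for every $n$, which settles one half of the conclusion.

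For $\hrk{k-1}{X} \supset K$, Lemma~\ref{hullintersection} similarly reduces matters to proving $K \subset \hrk{k-1}{\Sigma_n}$ for every $n$, which I would establish by induction on $n$ under the stronger invariant $K \subset \hk{k}{\Sigma_n}$. The inductive step applies Lemma~\ref{foundation} with $p = q_n$ and base set $\Sigma_{n-1}$, yielding
$$\hrk{k-1}{\Sigma_n} \supset \{\Re q_n \leq 0\} \cap \hk{k}{\Sigma_{n-1}} \supset K,$$
using $\Re q_n \leq 0$ on $K$ together with the inductive hypothesis.

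The main obstacle is preserving the invariant $K \subset \hk{k}{\Sigma_n}$ across each cut, since Lemma~\ref{foundation} delivers only the weaker $(k-1)$-rational hull conclusion for $\Sigma_n$. To close this gap, I would argue slice by slice: for each $x \in K$ and each subvariety $V \subset \CN$ of pure codimension at most $k-1$ through $x$, the inductive hypothesis gives $x \in \h{\Sigma_{n-1} \cap V}$, and running a Lemma~\ref{foundation}-style argument inside the slice $\Sigma_{n-1} \cap V$, exploiting the polynomial convexity of $K$ and the strict negativity $\Re q_n < 0$ on $K$, should promote this to $x \in \h{\Sigma_n \cap V}$. Carrying out this slice-wise promotion is where I expect the principal technical work to lie.
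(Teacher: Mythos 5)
Your overall architecture --- iterated half-space cuts, separation of $K$ from $Z_n$ using polynomial convexity of $K\cup Z_n$, and passage to the limit via Lemma~\ref{hullintersection} --- is close to the paper's, but there is a genuine gap exactly where you locate it, and the slice-wise ``promotion'' you propose cannot work. The invariant $K\subset \hk k{\Sigma_n}$ is false in general after even a single cut. Concretely, take $N=k=2$, $\Sigma=\partial B$, and $K=\{0\}\subset \hk 2{\partial B}=\oB$. For $0$ to lie in $\hk 2{\Sigma_1}$ with $\Sigma_1=\{\Re q_1\leq 0\}\cap\partial B$, one would need $0\in\h{\Sigma_1\cap V}$ for every complex line $V$ through $0$; but $\Sigma_1\cap V$ is in general a proper closed arc of the circle $V\cap\partial B$, hence polynomially convex, and its hull does not contain $0$. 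The one-variable statement underlying your promotion step --- that $x\in\h Y$ and $\Re q(x)<0$ imply $x$ lies in the polynomial hull of $\{\Re q\leq 0\}\cap Y$ --- already fails for $Y$ the unit circle in $\C$, $x=0$, $q(z)=z-\frac12$. This is precisely why Lemma~\ref{foundation} concludes only that $\hrk{k-1}{X}\supset\{\Re p\leq 0\}\cap\hk k\Sigma$: within a slice $V$ all one can salvage is that $V\cap\Sigma$ must meet $X$ (otherwise $V\cap\Sigma\subset\{\Re p\geq\alpha\}$ for some $\alpha>0$, which expels $z_0$ from $\h{V\cap\Sigma}$), not that $z_0$ remains in the polynomial hull of the cut slice.

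The paper sidesteps this by never cutting the previous stage: each $X_j$ is a cut of the \emph{original} set, $X_j=\{\Re f_j\leq 0\}\cap\Sigma$, so Lemma~\ref{foundation} is always invoked with base set $\Sigma$, for which $K\subset\hk k\Sigma$ holds by hypothesis, giving $\hrk{k-1}{X_j}\supset K$ for every $j$ with no invariant to propagate. The nesting $X_{j+1}\subset X_j$ is then engineered not by intersecting with $X_j$ but by demanding $\Re f_{j+1}>1$ on $L_j=\{\Re f_j\geq 0\}\cap\h\Sigma$ as well as on $Z_{j+1}$; the separating polynomial exists because $K\cup L_j$ is polynomially convex by Kallin's lemma (the images $f_j(K)$ and $f_j(L_j)$ lie in disjoint half-planes), and then $K\cup L_j\cup Z_{j+1}$ is polynomially convex by Lemma~\ref{Stout-lemma}. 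If you restructure your induction this way, the remainder of your argument (the limit step via Lemma~\ref{hullintersection} and the disjointness $\h X\cap Z_j=\emptyset$ from $|e^{f_j}|\leq 1$ on $\h{X_j}$) goes through.
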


\begin{proof}
We will construct a sequence of polynomials $\{f_j\}_{j=1}^\infty$ such that for each $j$ we have $\Re f_j<0$ on $K$ and $\Re f_j>0$ on $Z_j$ and such that the sets 
\be
X_j=\{\Re f_j\leq 0\}\cap \Sigma\label{Xj}
\ee
form a decreasing sequence.  The set
$X=\bigcap_{j=1}^\infty X_j$
then has the required properties by Lemmas~\ref{hullintersection} and~\ref{foundation}.

We construct the $f_j$ inductively.  First note that the set $K\cup Z_1$ is polynomially convex by Lemma~\ref{Stout-lemma}, so there is a polynomial $f_1$ such that
\[ \hbox{$\Re f_1<-1$ on $K$ and $\Re f_1>1$ on $Z_1$}.\]
Define $X_1=\{\Re f_1\leq 0\}\cap \Sigma$.  

For the inductive step, assume that polynomials $f_1,\ldots,f_n$ have been chosen such that the sets $X_1,\ldots, X_n$ defined as in (\ref{Xj}) for $j=1,\ldots, n$ form a decreasing sequence, and for each $j$ we have $K\subset \{\Re f_j<-1\}$ and $Z_j\subset \{\Re f_j>1\}$.  Let $L_n=\{\Re f_n\geq 0\}\cap \h \Sigma$.  This set is polynomially convex.  Because the sets $f_n(K)$ and $f_n(L_n)$ lie in disjoint half-planes, their polynomial hulls are disjoint.  Therefore, $K\cup L_n$ is polynomially convex by Kallin's lemma \cite{Kallin} (or see \cite[Theorem~1.6.19]{Stout}).  Now another application of Lemma~\ref{Stout-lemma} yields that the set $K\cup L_n \cup Z_{n+1}$ is polynomially convex.  Consequently, there is a polynomial $f_{n+1}$ such that $\Re f_{n+1}<-1$ on $K$ and $\Re f_{n+1}> 1$ on $L_n\cup Z_{n+1}$.  Now define $X_{n+1}$ as in (\ref{Xj}) with $j=n+1$ and observe that the sets $X_1,\ldots, X_{n+1}$ then have the required properties for the induction to continue.
\end{proof}

With the lemmas in hand, we now conclude the proofs of Theorems~\ref{Duval} and~\ref{Duval-dense}.

\bpf[Proof of Theorem~\ref{Duval}]
The collection of all polynomials on $\C^N$ having no zeros on $K$, viewed as a subset of $P(\Sigma)$, has a countable dense subset.  Choosing such a subset $\{p_j\}$ and applying the lemma just proven yields a compact subset $X$ of $\Sigma$ such that $\hrk {k-1}X\supset K$ and each $p_j$ is zero-free on $\h X$.  
Assume to get a contradiction that $\h X\setminus K$ contains an analytic disc $\sigma:D\rightarrow \h X\setminus K$.  Since $K$ is polynomially convex, and hence rationally convex, there is a polynomial $p$ such that $p(\sigma(0))=0$ and $p$ has no zeros on $K$.  Because $\sigma$ is injective, we may assume, by adding to $p$ a small multiple of a suitable first degree polynomial if necessary,
that  $\bigl(\partial (p\circ \sigma)/\partial z\bigr)(0)\not= 0$.  Then Rouche's theorem shows that every polynomial that is uniformly close to $p$ on $\sigma(D)$ also has a zero on $\sigma(D)$.  But then some $p_j$ must have a zero on $\sigma(D)\subset \h X$, a contradiction. 
\epf

\bpf[Proof of Theorem~\ref{Duval-dense}]
The hypotheses enable us to choose a countable dense subset $\{p_j\}$ of $P(\Sigma)$ consisting of polynomials each of which is zero-free on $K$.  The result then follows immediately from Lemma~\ref{keylemma}.
\epf

For the proofs of Theorems~\ref{rat-Duval} and~\ref{rat-Duval-dense} we need analogues of some of the lemmas above.

\blem\label{rat-foundation}
Let $\Sigma\subset\C^N$ be a compact set, and let $k\geq 2$ be an integer.
Suppose $\{p_j\}$ is a sequence of polynomials and $\{r_j\}$ is a sequence of strictly positive numbers.
Define 
$$X=\Sigma\setminus \bigcup_{j=1}^\infty \{z\in \Sigma: |p_j(z)|< r_j\}$$
 and for each $j$,
$$H_j=\{z\in \Sigma: |p_j(z)|< r_j\}.$$
For $z_0\in \hk k \Sigma$, if $z_0\notin \hrk {k-1}X$, then for some $n$, we have $z_0\in [\cup_{j=1}^n \oh_j]\endhat$.
\elem

\bpf
Suppose $z_0\notin \hrk {k-1}X$.  Then there is a subvariety $V$ of $\CN$ of pure codimension $\leq k-1$ passing through $z_0$ and disjoint from $X$.  Then $V\cap \Sigma\subset \cup_{j=1}^\infty H_j$.  Since $V\cap \Sigma$ is compact, there is an $n$ such that $V\cap \Sigma \subset \cup_{j=1}^n H_j$.  Because $z_0$ is in $\hk k\Sigma$, it follows immediately that $z_0$ is in the polynomial hull of $\cup_{j=1}^n \oh_j$
\epf

\blem\label{rat-keylemma}
Let $\Sigma\subset\C^N$ be a compact set, and let $k\geq 2$ be an integer.
If $K\subset \hk k\Sigma$ is a rationally convex set, and if $\{p_j\}$ is a sequence of polynomials such that each of the sets $Z_j=\h \Sigma\cap \pji(0)$ is disjoint from $K$, then there is a compact subset $X$ of $\Sigma$ such that $\hrk {k-1}X \supset K$ and $\hr X \cap Z_j=\emptyset$ for all $j$.
\elem

\bpf
The proof is the same as the proof of \cite[Lemma~3.5]{Izzo} with \cite[Lemma~3.4]{Izzo} replaced by Lemma~\ref{rat-foundation}, so we omit the details.  
One shows that for a suitable choice of strictly positive numbers $r_1, r_2, \ldots$, the set $X$ defined as in Lemma~\ref{rat-foundation} satisfies $\hrk {k-1}X\supset K$.  Since it is obvious that each $p_j$ is zero-free on $X$, that proves the lemma.
\epf

\bpf[Proof of Theorem~\ref{rat-Duval}]
This follows from Lemma~\ref{rat-keylemma} by exactly the same argument that was used above to obtain Theorem~\ref{Duval} from 
Lemma~\ref{rat-keylemma}.
\epf

\bpf[Proof of Theorem~\ref{rat-Duval-dense}]
The proof is similar to the proof of Theorem~\ref{Duval-dense}.  The hypotheses enable us to choose a countable dense subset  $\{ p_j\}$ of $P(\Sigma)$ consisting of polynomials  each of which is zero-free on $K$.  The result then follows immediately from Lemma~\ref{rat-keylemma} and the following lemma whose easy proof is given in \cite[Lemma~4.6]{Izzo}.
\epf

\blem \label{poly-to-rat}
Let $X\subset \C^N$ be compact.  Suppose there is a dense subset of $P(X)$ consisting of polynomials each of which is zero-free on $X$.  Then $R(X)$ has dense invertibles.
\elem




\section{A Cantor set in $\CN$ with large 
$(N-1)$-rational hull}\label{Cantor-set-section}

In this section we prove the existence of a Cantor set in $\CN$ whose ${(N-1)}$-rational hull has interior.  Note that, in the context of the hulls considered in this paper, this is the strongest possible statement about Cantor sets having large hulls, for since every Cantor set in the complex plane is polynomially convex, every Cantor set in $\CN$ is $N$-polynomially convex.

\begin{theorem}\label{CantorLhull} 
For $N\geq 2$, there exists a Cantor set $X$ in $\CN$ such that $\hrk {N-1} X$ contains the closed unit ball of $\CN$.  Furthermore, for any $\rho>1$, the set $X$ can be taken to lie in the spherical shell $\spshell$.
\end{theorem}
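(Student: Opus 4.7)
The plan is to construct $X$ as a nested intersection $X=\bigcap_{n\ge 0} X_n$, where each $X_n\subset\spshell$ is a finite disjoint union of closed balls, the supremum of diameters of components of $X_n$ tends to $0$, and each component of $X_n$ contains at least two components of $X_{n+1}$; the standard characterization of Cantor sets then makes $X$ a Cantor set. The property I want to preserve at every stage is
\[ \overline B(0,1)\subset \hrk{N-1}{X_n},\qquad n\ge 0, \]
which passes to the limit by Lemma~\ref{hullintersection} and gives the desired conclusion. As the base case, I would take $X_0$ to be a finite cover by closed balls of a thin tubular neighborhood of a sphere $\{\|z\|=r_0\}$ with $1<r_0<\rho$. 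Every irreducible component of a pure $1$-dimensional analytic subvariety of $\CN$ is unbounded: if it were compact, the maximum modulus principle applied to each coordinate function would force it to be a point, contradicting dimension $1$. Therefore any such subvariety meeting $\overline B(0,1)$ must cross the sphere of radius $r_0$ and so lies in $X_0$ at that crossing. This gives $\overline B(0,1)\subset \hrk{N-1}{X_0}$, and it suffices to handle only pure codimension $N-1$ (i.e.\ pure dimension $1$) subvarieties, since a subvariety of higher dimension contains a $1$-dimensional subvariety through any given point.

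The inductive step would be reduced to the following refinement lemma: given a compact $Y\subset \spshell$ with $\overline B(0,1)\subset \hrk{N-1}{Y}$ and $\varepsilon>0$, there exists a compact $Y'\subset Y$ that is a finite disjoint union of closed balls of diameter $<\varepsilon$, still satisfies $\overline B(0,1)\subset \hrk{N-1}{Y'}$, and splits each component of $Y$ into at least two descendants. My strategy for this lemma is:
\textbf{(a)} argue that it suffices to test against a countable dense family of test varieties --- e.g.\ pure $1$-dimensional algebraic subvarieties of bounded degree with rationally bounded coefficients --- after using a Bishop--Stolzenberg type approximation/compactness result to reduce the analytic case to the algebraic case;
\textbf{(b)} for each test variety $V$, pick an intersection point $p_V\in V\cap Y$ and, using compactness of an appropriate parameter space for bounded-degree algebraic curves (Chow variety) together with an open-mapping/continuity argument, show that finitely many such representatives control the whole family at a given tolerance;
\textbf{(c)} thicken these finitely many chosen points into small pairwise disjoint closed balls inside $Y$, arranging by a perturbation that each component of $Y$ contains several of them. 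Iterating with $\varepsilon=\varepsilon_n\to 0$ yields the desired nested sequence $X_n$.

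The main obstacle is step~\textbf{(a)}: bridging the gap between the uncountable family of possibly transcendental $1$-dimensional analytic subvarieties and a tractable countable/finite-parameter test family. A single analytic curve in $\CN$ can be wild, and one cannot in general globally approximate it by algebraic curves while preserving the ``avoids $Y$'' property. The argument must lean on two classical ingredients: the maximum-modulus-driven fact that analytic curves exit bounded sets in a controlled way, and Bishop-type compactness theorems for families of analytic subvarieties of bounded volume, which allow one to extract limits and relate local analytic branches to algebraic ones. This is the higher-codimension analogue of the phenomenon underlying Henkin's construction in $\C^2$ (whose proof uses exactly this kind of Hartogs/Levi extension rigidity); and it is precisely because the codimension $N-1$ level admits this finite-approximation rigidity that such a strong statement about $\hrk{N-1}{X}$ is possible. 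Once the refinement lemma is in hand, the Cantor-set combinatorics and the passage to the limit via Lemma~\ref{hullintersection} are routine, and the hypothesis that $X$ lies in the shell is maintained throughout since $X_0$ was chosen there.
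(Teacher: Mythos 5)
Your overall skeleton (a nested intersection of finite disjoint unions of small compacta, passage to the limit via Lemma~\ref{hullintersection}, and the standard characterization of Cantor sets) matches the paper's, and your base case is fine. But the heart of the matter is your refinement lemma, and there you have a genuine gap which you yourself flag: step~(a) is not a technicality to be smoothed over, it is the entire difficulty, and the route you sketch does not close it. The property ``$V$ is disjoint from $Y$'' is not stable under approximating a transcendental analytic curve by algebraic curves, so no Bishop/Chow-variety compactness statement lets you test $\hrk{N-1}{Y}$ against a countable bounded-degree family. Step~(b) is also not a proof: retaining one intersection point $p_V$ per variety gives no control, since a variety close to $V$ in any reasonable parameter space may meet $Y$ far from $p_V$, and the ``finitely many representatives control the whole family'' claim is exactly what needs proving.

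The paper circumvents all of this by never testing individual varieties during the refinement. Its pieces are not arbitrary small balls but \emph{spherical caps} $E=\{z\in S_R:\Re\<z,\zeta\>\geq t\}$, and the only confrontation with varieties is Lemma~\ref{hullofcap}: for such a cap, $\hrk{N-1}{E}$ is exactly the corresponding solid cap (the same half-space intersected with the closed ball of radius $R$). This rests on $\hk N{\pB}=\oB$ --- the maximum principle on a variety: any pure positive-dimensional $V$ through $z_0\in B$ satisfies $z_0\in \h{V\cap \pB}$ --- combined with Lemma~\ref{foundation}: if $V$ misses the cap, then $V\cap S_R\subset\{\Re p\geq\alpha\}$ for some $\alpha>0$, and $z_0\in\h{V\cap S_R}$ forces $\Re p(z_0)\geq\alpha$ (apply the polynomial $e^p$... or rather approximate it; the point is that $\{\Re p\geq\alpha\}$ cuts out a polynomially convex piece). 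Note this is strictly stronger than your observation that curves are unbounded and hence exit the ball: a variety through $z_0$ certainly hits the sphere, but you need it to hit the \emph{cap}, and that requires the maximum-modulus input, not just unboundedness. With the exact hull of a cap in hand, the refinement is elementary (Lemma~\ref{littlecaps}): cover $E$ by small caps $G^j$ on $S_R$, replace each $G^j$ by the cap $E^j$ cut from a slightly larger sphere $S_{R_j}$ by the same half-space, with the $R_j$ distinct so the $E^j$ are pairwise disjoint; then $\hrk{N-1}{E^j}\supset G^j$, so the union of the new caps has $(N-1)$-rational hull containing $E$ and hence containing $\hrk{N-1}{E}$. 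If you replace ``closed balls'' by ``spherical caps on spheres of distinct radii'' and prove the cap-hull computation, the rest of your outline goes through essentially as the paper's proof.
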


Before proving this theorem, we establish the following easy 
corollary.

\begin{theorem}\label{perfectset} 
For $N\geq 2$, there exists a totally disconnected, perfect set in $\CN$ that intersects every analytic subvariety of $\CN$ of positive dimension.
\end{theorem}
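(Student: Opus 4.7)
The plan is to derive Theorem~\ref{perfectset} from Theorem~\ref{CantorLhull} by assembling scaled copies of the Cantor set produced there in a sequence of disjoint spherical shells whose radii tend to infinity. First I would exploit the evident equivariance of $\hrk{N-1}$ under the linear scaling $z\mapsto Rz$ (which sends analytic subvarieties of $\CN$ to analytic subvarieties of the same pure codimension) to produce, for each integer $j\geq 1$, a Cantor set $X_j$ lying in the shell $\{z\in\CN: 2^j<\|z\|<(3/2)\cdot 2^j\}$ with $\hrk{N-1}X_j\supset \overline B(0,2^j)$. These shells are pairwise disjoint and in fact separated by the nonempty open annuli $\{(3/2)\cdot 2^j<\|z\|<2^{j+1}\}$.

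I would then set $E=\bigcup_{j=1}^\infty X_j$ and verify the required topological properties. The separation of the shells implies that $E$ is closed in $\CN$: any convergent sequence in $E$ is bounded, so visits only finitely many of the $X_j$, and a subsequence lying in a single closed $X_j$ forces the limit into that $X_j$. The set $E$ is totally disconnected because any connected subset must lie in a single $X_j$, which is itself totally disconnected; and $E$ has no isolated points since no $X_j$ does and the $X_j$'s are mutually separated. Thus $E$ is a totally disconnected, perfect subset of $\CN$.

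For the intersection property I would invoke the classical fact, an immediate consequence of the maximum modulus principle applied to coordinate functions on irreducible compact complex analytic spaces, that every positive-dimensional analytic subvariety $V$ of $\CN$ is unbounded. Given such a $V$, pick any $z_0\in V$ and choose $j$ with $2^j\geq\|z_0\|$; then $z_0\in \overline B(0,2^j)\subset \hrk{N-1}X_j$, and since $V$ passes through $z_0$ and has pure codimension at most $N-1$, the definition of the $(N-1)$-rational hull immediately yields $V\cap X_j\neq\emptyset$, whence $V\cap E\neq\emptyset$. The only nontrivial ingredient is the unboundedness of positive-dimensional subvarieties; the rest is routine bookkeeping to arrange the separation of the shells so that the assembled union inherits the desired topological structure.
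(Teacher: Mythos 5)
Your proof is correct and is essentially the paper's own argument: the paper likewise takes the union of rescaled copies of the Cantor set from Theorem~\ref{CantorLhull}, placed in pairwise disjoint spherical shells marching off to infinity so that the corresponding $(N-1)$-rational hulls exhaust $\CN$. (One minor remark: the unboundedness of positive-dimensional subvarieties is never actually used in your argument --- the definition of $\hrk {N-1}{X_j}$ already forces any positive-dimensional subvariety through a point of $\overline B(0,2^j)$ to meet $X_j$ --- so that invocation can simply be dropped.)
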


\begin{proof}
Theorem~\ref{CantorLhull} gives that for each positive integer $n$ there exists a Cantor set $X_n$ contained in the spherical shell $\{z\in \CN: n<|z|<n+\frac{1}{2}\}$ such that $\hrk {N-1} {X_n}$ contains the closed ball of radius $n$ centered at the origin.  Let $X=\bigcup_{n=1}^\infty X_n$.  Then $X$ is easily seen to be a totally disconnected, perfect set in $\CN$ that intersects every analytic subvariety of $\CN$ of positive dimension.
 \end{proof}

The proof of Theorem~\ref{CantorLhull} was inspired by Vitushkin's construction of a Cantor set in $\C^2$ whose polynomial hull has interior \cite{Vit} and Henkin's construction of a Cantor set in $\C^2$ whose \emph{rational} hull has interior \cite{Henkin}.  In outline, the presentation 
given here follows the exposition of Vitushkin's construction given in 
\cite[Chapter~21]{Wermerbook}, while the details are more closely related to Henkin's construction.

The boundary of the open unit ball $B=\{z: \|z\|<1\}$ in $\CN$ will be denoted by $\partial B$.

The following lemma is a consequence of Lemma~\ref{foundation} because $\h {\partial B} = \hk N{\partial B}=\oB$.  (A direct proof similar to the proof of \cite[Lemma~3.2]{Izzo} can also be given.)

\begin{lemma}\label{hullofcap}
Let $N\geq 2$.  Let $p$ be a polynomial on $\CN$, and let $X=\{\Re p\leq 0\} \cap \pB$.  Then $\h X = \hrk {N-1} X = \{\Re p\leq 0\} \cap \oB$.
\end{lemma}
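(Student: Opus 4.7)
The plan is to invoke Lemma~\ref{foundation} with $\Sigma=\partial B$ and $k=N$; the parenthetical hint isolates the key auxiliary fact, namely $\h{\partial B}=\hk N{\partial B}=\oB$. Once this is established, Lemma~\ref{foundation} sandwiches both $\hk{N-1}X$ and $\hrk{N-1}X$ between $\{\Re p\leq 0\}\cap\hk{N-1}{\partial B}$ and $\{\Re p\leq 0\}\cap\hk N{\partial B}$, both of which will collapse to $\{\Re p\leq 0\}\cap\oB$, giving the middle equality of the lemma.

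To verify $\hk N{\partial B}=\oB$, note that the inclusion $\subset$ is automatic from the chain of $k$-hulls together with the classical identity $\h{\partial B}=\oB$. For the reverse inclusion, take $z\in\oB$; if $z\in\partial B$ there is nothing to check, so suppose $z$ lies in the open ball $B$. Any analytic subvariety $V\subset\CN$ of pure positive dimension passing through $z$ is non-compact (otherwise some coordinate function would attain its maximum modulus on $V$ at an interior point, violating the maximum principle), so $V$ meets $\partial B$; this yields $z\in\hrk{N-1}{\partial B}$. Furthermore, the maximum principle applied on the compact analytic set $V\cap\oB$ shows that every polynomial on $\CN$ attains its maximum modulus over $V\cap\oB$ on $V\cap\partial B$, so $z\in\h{V\cap\partial B}$. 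Combining these two observations places $z$ in $\hk N{\partial B}$.

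To finish, we must replace $\hrk{N-1}X$ with $\h X$. Since $\hrk{N-1}X\subset\h X$, the inclusion $\{\Re p\leq 0\}\cap\oB\subset\h X$ is immediate. For the reverse inclusion, I plan to use Lemma~\ref{pkconvex} to check that $\{\Re p\leq 0\}\cap\oB$ is itself polynomially convex: choosing $R>\max_{\oB}|p|$, the planar set $Y=\{w\in\C:\Re w\leq 0,\ |w|\leq R\}$ is polynomially convex (a closed half-plane intersected with a closed disk), so Lemma~\ref{pkconvex} applied with $k=1$, ambient polynomially convex set $\oB$, and polynomial $p$ gives polynomial convexity of $p^{-1}(Y)\cap\oB=\{\Re p\leq 0\}\cap\oB$. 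Since this polynomially convex set contains $X$, it also contains $\h X$, which closes the loop. The whole argument is essentially routine assembly of the preceding machinery; the only step with genuine content is the maximum-principle observation placing every $z\in B\cap V$ inside $\h{V\cap\partial B}$ for each positive-dimensional subvariety $V$ through $z$.
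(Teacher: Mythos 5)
Your argument is correct and is essentially the paper's own proof: the paper disposes of this lemma in one sentence by citing Lemma~\ref{foundation} together with the fact that $\h {\partial B}=\hk N{\partial B}=\oB$, which is exactly the route you take, with the maximum-principle verification of $\hk N{\partial B}=\oB$ and the polynomial convexity of $\{\Re p\leq 0\}\cap\oB$ correctly filled in.
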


We will denote by $S_R$ the sphere in $\CN$ of radius $R$ centered at the origin: $S_R=\{z\in \CN: \|z\|=R\}$.  By a {\it spherical cap\/} on the sphere $S_R$ we mean a set of the form
$$E=\{z\in S_R: \Re\< z,\zeta\>\geq t\}$$
for some $\zeta\in \pB= S_1$ and $t<R$.  (Here $\<z,\zeta\>$ denotes the usual Hermitian inner product on $\CN$.)  We require $t<R$ so that every spherical cap on $S_R$ has nonempty interior in $S_R$.  Note that when $t\leq -R$, the spherical cap $E$ above is $S_R$ itself.

\begin{lemma}\label{littlecaps}
Let $N\geq 2$.
Let $0<R<\rho$ be given, let $E$ be a spherical cap on the sphere $S_R$ in $\CN$, let $\Omega$ be a neighborhood of $E$ in $\CN$, and fix $\ep>0$.  Then there exists a finite set of pairwise disjoint spherical caps $E^1,\ldots, E^s$ with each $E^j$ on a sphere $S_{R_j}$ with $R<R_j<\rho$ such that
{
\item{\rm (i)} $E^j\subset \Omega,\ j=1,\ldots, s$ \openup1.8\jot
\item{\rm (ii)} $\diam(E^j)<\ep,\ j=1,\ldots s$
\item{\rm (iii)} $h^{N-1}_r\biggl(\textstyle\bigcup\limits_{k=1}^s E^j\biggr) \supset \hrk {N-1} E$.

}
\end{lemma}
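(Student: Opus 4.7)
The plan is to reduce to a slightly larger sphere and then pass from a single large cap to a disjoint union of small caps. First, I would fix $R_1\in(R,\rho)$ with $R_1$ close to $R$ and $\delta>0$ small so that the enlarged cap
\[
E_1 := \{z\in S_{R_1} : \Re\<z,\zeta\>\geq t-\delta\}
\]
lies in $\Omega$. The natural scaled analogue of Lemma~\ref{hullofcap} (the same proof works verbatim for any sphere) shows that $\hrk{N-1}{E_1}$ is the closed lens $\{z: \|z\|\leq R_1,\ \Re\<z,\zeta\>\geq t-\delta\}$, and this clearly contains $\hrk{N-1}{E}$. I would then cover the compact set $E_1$ by finitely many open spherical caps $\widetilde E^1,\dots,\widetilde E^s$ on $S_{R_1}$, written as $\widetilde E^j=\{z\in S_{R_1} : \Re\<z,\zeta^j\>\geq \widetilde t^{\,j}\}$, each of diameter less than $\ep/2$ and contained in $\Omega$.

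Next, I would displace each $\widetilde E^j$ onto its own sphere. Choose pairwise distinct radii $R_1',\dots,R_s'\in(R_1,\rho)$ close to $R_1$ and small slack parameters $\mu_j>0$, and set
\[
E^j := \{z\in S_{R_j'} : \Re\<z,\zeta^j\>\geq \widetilde t^{\,j}-\mu_j\}.
\]
By construction the $E^j$ lie on distinct spheres, so they are pairwise disjoint; and for $R_j'-R_1$ and $\mu_j$ sufficiently small, one also has $\diam(E^j)<\ep$ and $E^j\subset\Omega$. The remaining and essential task is to verify $\hrk{N-1}{\bigcup_j E^j}\supset \hrk{N-1}E$. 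Given $z_0\in\hrk{N-1}E\subset \hrk{N-1}{E_1}$ and a pure-codimension-$\leq(N-1)$ subvariety $V$ of $\CN$ through $z_0$, the variety $V$ meets $E_1$ at some $w$, and $w\in\widetilde E^k$ for some $k$. Near $w$, $V$ contains a one-dimensional holomorphic branch, and since $\|z\|^2$ is strictly plurisubharmonic, its restriction to this branch is a non-constant subharmonic function of value $R_1^2$ at $w$. A leading-order Taylor expansion of the local parametrization $\phi$, combined with the identity $\|w+b\|^2-\|w\|^2=2\Re\<w,b\>+\|b\|^2$, yields a point $w^*\in V\cap S_{R_k'}$ with the uniform-in-$V$ estimate $\|w^*-w\|\leq \sqrt{(R_k')^2-R_1^2}$. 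Choosing $\mu_k\geq \sqrt{(R_k')^2-R_1^2}$ then forces $\Re\<w^*,\zeta^k\>\geq \widetilde t^{\,k}-\mu_k$, so $w^*\in V\cap E^k$, as required.

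The main obstacle is precisely the uniform bound $\|w^*-w\|\leq\sqrt{(R_k')^2-R_1^2}$, independent of the local geometry of $V$ at $w$. The underlying reason is that the "displacement vector" $\phi(\lambda)-w$ along the local branch is, to leading order, a complex multiple of the first non-vanishing Taylor coefficient $a_k$ of $\phi$; writing $b=s\,a_k$ and solving the quadratic in $|s|$ that arises from the squared-norm identity shows that the minimum $\|b\|$ is bounded purely in terms of $R_k'$ and $R_1$, with the factors of $\|a_k\|$ and $|\<w,a_k\>|$ cancelling out. The higher-order Taylor corrections are absorbed by taking $R_k'-R_1$ sufficiently small. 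Once this estimate is established, the rest of the argument is routine bookkeeping with the parameters $R_1$, $\delta$, $R_j'-R_1$, and $\mu_j$.
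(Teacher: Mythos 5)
Your construction of the caps is essentially the paper's: cover $E$ by small caps, then push each one out to its own slightly larger sphere so that the pieces become pairwise disjoint while remaining small and inside $\Omega$. But your verification of (iii) departs from the paper and is where the trouble lies. The paper disposes of (iii) in one line: each small cap $E^j$ on $S_{R_j}$ is of the form $\{\Re p\leq 0\}\cap S_{R_j}$ for a first-degree polynomial $p$, so the scaled version of Lemma~\ref{hullofcap} gives $\hrk {N-1}{E^j}=\{\|z\|\leq R_j\}\cap\{\Re\<z,\zeta_j\>\geq t_j\}$, which (because $R_j>R$) contains the corresponding covering cap $G^j$ on $S_R$; monotonicity and idempotence of the hull then yield $h_r^{N-1}(\bigcup E^j)\supset h_r^{N-1}(\bigcup G^j)\supset \hrk {N-1}E$. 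You actually invoke exactly this scaled lemma to compute $\hrk {N-1}{E_1}$, but then fail to apply it to the small caps $E^j$ themselves, which is what makes the entire variety-chasing argument unnecessary.

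Beyond being a detour, your argument for the key estimate has a genuine gap. You derive the bound $\|w^*-w\|\leq\sqrt{(R_k')^2-R_1^2}$ from a leading-order Taylor expansion of a local parametrization of a branch of $V$ at $w$, claiming the higher-order corrections are ``absorbed by taking $R_k'-R_1$ sufficiently small.'' This cannot work as stated: the size of the neighborhood on which the leading term dominates, and the magnitude of the corrections, depend on the particular variety $V$ and the point $w$, whereas $R_k'$ must be fixed \emph{before} $V$ is given (it is built into the definition of $E^k$). A uniform estimate of this kind is true, but the correct route is not Taylor expansion; it is the observation that $u(z)=2\Re\<z,w\>-\|w\|^2$ is pluriharmonic with $\|z\|^2=u(z)+\|z-w\|^2$, so the maximum principle on $V\cap \overline{B(w,\delta)}$ forces a point of $V$ at distance $\delta$ from $w$ with $\|z\|^2\geq \|w\|^2+\delta^2$ (and one still needs a connectivity/intermediate-value argument on $V$ to land exactly on the sphere $S_{R_k'}$ rather than outside it, a point you pass over). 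None of this machinery is needed once Lemma~\ref{hullofcap} is applied to each $E^j$ directly.
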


\begin{proof}
By compactness of $E$, there is a finite collection of spherical caps
$G^1,\ldots, G^s$ on $S_R$ that cover $E$ each having diameter less than $\ep$ and contained in $\Omega$.  Let $\rowonly \zeta s \in \pB$ and $\rowonly ts \in \R$ be such that
$$G^j=\{z\in S_R: \Re\<z,\zeta_j\>\geq t_j\}.$$
Then there exists an $R'$ with $R<R'<\rho$ such that for each $j=1,\ldots, s$ the set
$$H^j=\{z:R\leq \|z\|\leq R' \ {\rm and}\ \Re\<z,\zeta_j\>\geq t_j\}$$
has diameter less than $\ep$ and is contained in $\Omega$.  Choose distinct numbers $\rowonly Rs$ such that $R<R_j<R'$ for each $j=1,\ldots, s$, and set
$$E^j=\{z\in S_{R_j}: \Re\<z,\zeta_j\>\geq t_j\}.$$
Then $E^j\subset H^j$, so conditions~(i) and~(ii) hold. Condition (iii) holds also since an application of Lemma~\ref{hullofcap} shows that $\hrk {N-1} {E^j} \supset G^j$, and hence,
$$h^{N-1}_r\biggl(\textstyle\bigcup\limits_{k=1}^s E^j\biggr) \supset h^{N-1}_r\biggl(\textstyle\bigcup\limits_{k=1}^s G^j\biggr) \supset \hrk {N-1} E.$$
\end{proof}

\begin{lemma}\label{Wermeranalogue}
Let $N\geq 2$.
Given $\rho>1$, there exists a sequence of compact subsets $(K_n)$ of $\CN$ each contained in the spherical shell $\spshell$ such that
{
\item{\rm(i)} $K_{n}\subset K_{n-1}$ for each $n>1$\openup1\jot
\item{\rm(ii)} $\hrk {N-1} {K_n} \supset \oB$ for each $n\geq 1$
\item{\rm(iii)} For each $n>1$, there exist disjoint closed sets $K_n^1,\ldots, K_n^{s_n}$ such that $\diam K_n^j<1/n$ for each $j$ and $\bigcup_{j=1}^{s_n} K_n^{s_n}= K_n$.

}
\end{lemma}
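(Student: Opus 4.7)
The plan is to construct the $K_n$ inductively, keeping each $K_n$ as a disjoint union of closed ``fattened spherical caps'' inside the shell $\spshell$, so that the nesting $K_n \subset K_{n-1}$ can be maintained while repeatedly applying Lemma~\ref{littlecaps}. The key auxiliary invariant, carried alongside (i)--(iii), is that each piece $K_n^j$ contains in its interior a genuine spherical cap $E^{n,j}$ (on some sphere $S_{r_{n,j}}$ with $1 < r_{n,j} < \rho$), the caps $E^{n,j}$ are pairwise disjoint, and $\hrk{N-1}{\bigcup_j E^{n,j}} \supset \oB$. Fattening is forced on us because Lemma~\ref{littlecaps} produces new caps on \emph{strictly larger} spheres than the given cap, so new caps can never be contained in the old caps themselves; they need a solid neighborhood around the old cap in which to live.

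For the base case, pick $1 < R_1 < R_1' < \rho$ and set $K_1 = \{z \in \CN : R_1 \leq \|z\| \leq R_1'\}$, a closed thick shell inside $\spshell$ with a single piece $K_1^1 = K_1$ and embedded cap $E^{1,1} = S_{R_1}$ (the degenerate cap equal to the whole sphere). Applying Lemma~\ref{hullofcap} with a constant polynomial $p$ of negative real part, we get $\hrk{N-1}{S_{R_1}} \supset \overline{B_{R_1}} \supset \oB$, so (ii) holds; condition (iii) is vacuous for $n = 1$.

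For the inductive step, suppose $K_n$ satisfies the invariants. For each $j$, apply Lemma~\ref{littlecaps} to the cap $E^{n,j}$ with $\Omega = \Int(K_n^j)$ and $\ep = 1/(2(n+1))$, producing pairwise disjoint spherical caps $F^{j,1}, \ldots, F^{j,s_j}$ contained in $\Int(K_n^j)$, each of diameter less than $1/(2(n+1))$, with $\hrk{N-1}{\bigcup_k F^{j,k}} \supset \hrk{N-1}{E^{n,j}}$. Since the pieces $K_n^j$ are pairwise disjoint and the $F^{j,k}$ within a single $K_n^j$ are pairwise disjoint, the entire family $\{F^{j,k}\}_{j,k}$ has positive pairwise distances. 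Choose pairwise disjoint closed neighborhoods $K_{n+1}^{(j,k)}$ of $F^{j,k}$ contained in $K_n^j$, each of diameter less than $1/(n+1)$ and containing $F^{j,k}$ in its interior. Set $K_{n+1} = \bigsqcup_{j,k} K_{n+1}^{(j,k)}$ and $E^{n+1, (j,k)} = F^{j,k}$. Conditions (i) and (iii) hold by construction, and the auxiliary invariant (the cap $F^{j,k}$ sits in the interior of $K_{n+1}^{(j,k)}$) is preserved.

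For the hull condition (ii), I use the following elementary fact about $k$-rational hulls: if compact sets $A_1, \ldots, A_m, B_1, \ldots, B_m \subset \CN$ satisfy $A_j \subset \hrk{N-1}{B_j}$ for each $j$, then $\hrk{N-1}{\bigcup_j A_j} \subset \hrk{N-1}{\bigcup_j B_j}$. Indeed, if $V$ is an analytic subvariety of pure codimension $\leq N-1$ through $z \in \hrk{N-1}{\bigcup_j A_j}$, then $V$ meets some $A_j$ at a point $w$; since $w \in \hrk{N-1}{B_j}$ and $V$ is such a subvariety passing through $w$, $V$ must intersect $B_j$. Applying this with $A_j = E^{n,j}$ and $B_j = \bigcup_k F^{j,k}$, together with Lemma~\ref{littlecaps}(iii), gives $\hrk{N-1}{\bigcup_{j,k} F^{j,k}} \supset \hrk{N-1}{\bigcup_j E^{n,j}} \supset \oB$, and the inclusion $\bigcup_{j,k} F^{j,k} \subset K_{n+1}$ yields (ii). The main obstacle is the tension between Lemma~\ref{littlecaps}'s outward-pushing nature (new caps on larger spheres) and the nesting requirement (i); the fattening device resolves this by absorbing the radial displacement into a closed neighborhood that still sits inside the preceding piece, while the disjoint-union hull observation above ensures that passing from caps to caps (and not the fattening) preserves the crucial $(N-1)$-rational-hull lower bound.
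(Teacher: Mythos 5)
Your proof is correct and follows essentially the same route as the paper's: build the $K_n$ inductively as disjoint unions of compact neighborhoods of spherical caps produced by Lemma~\ref{littlecaps}, with the hull condition propagated through the embedded caps. The only cosmetic difference is that you state and prove explicitly the transitivity fact for $(N-1)$-rational hulls of unions, which the paper uses implicitly via the $(N-1)$-rational convexity of $\hrk{N-1}{K_{n+1}}$.
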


\begin{proof}
Throughout the proof, all spherical caps are to be chosen to lie on spheres centered at the origin.  

Choose $R$ such that $1<R<\rho$.  Let $E_1$ be the sphere $S_{R}$, and let $K_1$ be a compact neighborhood of $E_1$.  Then $\hrk {N-1} {K_1}\supset \oB$.  

By Lemma~\ref{littlecaps}, we can choose disjoint spherical caps $E_2^1,\ldots, E_2^{s_2}$ such that each $E_2^j$ is contained in the interior of $K_1$, $\diam (E_2^j)<1/2$, and
\be
h^{N-1}_r \biggl( {\textstyle\bigcup\limits_{j=1}^{s_2} E_2^j} \biggr) \supset \hrk {N-1} {E_1} \supset \oB. \label{hull12}  
\ee
Next choose disjoint compact neighborhoods $K_2^j$ of $E_2^j$, for each $j$, so that $K_2^j\subset K_1$ and $\diam K_2^j<1/2$.  Set $K_2=\bigcup_{j=1}^{s_2} K_2^j$.  Then $K_2\subset K_1$.  Also $\hrk {N-1} {K_2}\supset \hrk {N-1} {\bigcup_{j=1}^{s_2} E_s^j} \supset \oB$.  Thus (i), (ii), and (iii) hold for $n=2$.

Now fix $j$ with $1\leq j \leq s_2$.  By Lemma~\ref{littlecaps} we can choose disjoint spherical caps $E_3^{j1},\ldots, E_3^{j t_j}$ such that $E_3^{jk}$ is contained in the interior of $K_2^j$ and $\diam E_3^{jk}< 1/3$ for each $k$, and
\be
h^{N-1}_r \biggl( {\textstyle\bigcup\limits_{k=1}^{t_j} E_3^{jk}} \biggr) \supset E_2^j.
\label{hull23}
\ee

Next choose disjoint compact neighborhoods $K_3^{jk}$ of $E_3^{jk}$ such that $K_3^{jk}\subset K_2^j$ and $\diam K_3^{jk} < 1/3$ for all $k$.  Set
$$K_3=\textstyle\bigcup\limits_{j=1}^{s_2} \textstyle\bigcup\limits_{k=1}^{t_j} K_3^{jk}.$$
Then $K_3\subset K_2$.  Also $\hrk {N-1} {K_3} \supset E_2^j$ for each $j$ by (\ref{hull23}), and hence
$$
\hrk {N-1} {K_3} \supset h^{N-1}_r \biggl( {\textstyle\bigcup\limits_{j=1}^{s_2} E_2^j} \biggr) \supset \oB
$$
by (\ref{hull12}).  Thus (i), (ii), and (iii) hold for $n=3$.

By continuing, one obtains the desired sequence $(K_n)$.
\end{proof}

\begin{proof}[Proof of Theorem~\ref{CantorLhull}]
Let $(K_n)$ be a sequence of sets as in the preceding lemma, and set $K_\infty= \bigcap_{n=1}^\infty K_n$.  Then $K_\infty$ is a compact subset of the spherical shell $\spshell$, and $\hrk {N-1} {K_\infty}\supset \oB$ by Lemma~\ref{hullintersection}.  Now let $Y$ be a connected subset of $K_\infty$.  Then for each $n>1$, we have $Y\subset K_n^j$ for some $j$, and hence $\diam Y< 1/n$.  Consequently, $Y$ is a single point.  Thus $K_\infty$ is totally disconnected.

Now let $X$ be the largest perfect subset of $K_\infty$.  Then by Lemma~\ref{reductiontop}, $\hrk {N-1} X \setminus X \supset \hrk {N-1} {K_\infty} \setminus K_\infty \supset \oB$, and finally, $X$ is a Cantor set by the well known characterization of Cantor sets as the compact, totally disconnected, metrizable spaces without isolated points.
\end{proof}




\section{Cantor sets with hulls that contain no analytic discs}\label{Cantor-no-discs}

In this section we prove the existence of Cantor sets with nontrivial hulls without analytic discs and show that we can impose various additional conditions on these Cantor sets.

We begin with the following result which contains Theorems~\ref{Cantorset} and ~\ref{Cantorset-dense-invert} stated in the introduction.

\bthm\label{Cantorset-k}
For $N\geq 3$, there exists a Cantor set $X$ in $\CN$ such that $\hrk {N-2}X\setminus X$ is nonempty and $P(X)$ has dense invertibles.
\ethm

\bpf
By Theorem~\ref{CantorLhull}, there exists a Cantor set $\Sigma$ in $\CN$ such that $\hrk {N-1}\Sigma$ contains the closed unit ball of $\CN$.  Let $K$ be a one-point subset of $\hrk {N-1}\Sigma\setminus \Sigma$, and apply Theorem~\ref{Duval-dense} to get the existence of a compact subset $Y$ of $\Sigma$ such that $\hrk {N-2}Y \supset K$ and $P(Y)$ has dense invertibles.
Now let $X$ be the largest perfect subset of $Y$.  Then $\hrk {N-2}X\setminus X \supset \hrk {N-2}Y \setminus Y \neq \emptyset$ by Lemma~\ref{reductiontop}.  The set $X$ is a Cantor set since it is a perfect subset of the Cantor set $\Sigma$.  Finally, an easy argument shows that the condition that $P(Y)$ has dense invertibles  implies that $P(X)$ has dense invertibles as well.
\epf

As discussed in the introduction, we can also handle the question of the existence of nontrivial Gleason parts and nonzero bounded point derivations in hulls of Cantor sets.  The next result shows these hulls can have no nontrivial Gleason parts and no nonzero bounded point derivations, while the results presented after it show that these hulls can also have large Gleason parts and an abundance of nonzero bounded point derivations.

\begin{theorem}\label{no-derivations}
There exists a Cantor set $X$ in $\C^4$ such that
\item{\rm(i)} $\h X\setminus X$ is nonempty
\item{\rm(ii)} $P(X)$ has dense invertibles
\item{\rm(iii)} every point of $\h X$ is a one-point Gleason part for $P(X)$
\item{\rm(iv)} there are no nonzero bounded point derivations for $P(X)$.
\end{theorem}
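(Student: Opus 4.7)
The plan is to first construct a compact set $Y\subset \C^4$ carrying all the desired algebraic properties at the ambient level, and then to pass to its largest perfect subset. Apply Theorem~\ref{CantorLhull} with $N=4$ to obtain a Cantor set $\Sigma\subset \C^4$ whose $3$-rational hull $\hrk 3 \Sigma$ contains the closed unit ball. Fix a point $z_0$ in the open unit ball and set $K=\{z_0\}$; then $K\subset \hrk 3\Sigma$, and polynomials that are zero-free on the singleton $K$ are trivially dense in $P(\Sigma)$, so the hypotheses of Theorem~\ref{Duval-dense} are met at level $k=3$.

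Now strengthen the Duval-Levenberg iteration in Lemma~\ref{keylemma} by enlarging the dense sequence of polynomials $\{p_j\}$ to include a second countable family of auxiliary polynomials $\{q_j\}$ drawn from the construction of~\cite{CGI}. These are the polynomials used in~\cite{CGI} to force trivial Gleason parts and to rule out nonzero bounded point derivations: at each inductive step one chooses the next $q_j$ so that the extra inequality it places on $\Re f_{n+1}$ is compatible with the inequality already required on $L_n\cup Z_{n+1}$, using exactly the polynomial convexity tools (Kallin's lemma and Lemma~\ref{Stout-lemma}) on which Lemma~\ref{keylemma} already rests. Executing this strengthened induction produces a compact set $Y\subset \Sigma$ such that $\hrk 2 Y\supset K$, the algebra $P(Y)$ has dense invertibles, every Gleason part of $P(Y)$ is a singleton, and $P(Y)$ admits no nonzero bounded point derivations. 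In particular $K\subset \hrk 2 Y\subset \h Y$, so $\h Y\setminus Y$ is nonempty.

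Let $X$ be the largest perfect subset of $Y$. Since $X\subset Y\subset \Sigma$ with $\Sigma$ a Cantor set, $X$ is a compact, metrizable, totally disconnected, perfect set, hence itself a Cantor set. By Lemma~\ref{reductiontop}, $\hh X\supset \hh Y\ni z_0$, so $\h X$ is nontrivial. Each algebraic property passes from $P(Y)$ down to $P(X)$ by routine restriction arguments: every $\varphi\in \h X$ lies in $\h Y$ because $\h X\subset \h Y$; the inequality $\|\varphi-\psi\|_{P(X)^*}\ge \|\varphi-\psi\|_{P(Y)^*}$ holds for all $\varphi,\psi\in \h X$ because restriction $P(Y)\to P(X)$ is contractive, so trivial parts descend to $P(X)$; any nonzero bounded point derivation on $P(X)$ at $\varphi$ pulls back, via composition with restriction $P(Y)\to P(X)$, to a nonzero bounded point derivation on $P(Y)$ at $\varphi$ (nonzero because derivations are determined on polynomials, which are restrictions from $P(Y)$); and dense invertibles transfer by restricting invertible elements of $P(Y)$ to $X$, exactly as in the proof of Theorem~\ref{Cantorset-k}.

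The principal difficulty is arranging the \cite{CGI}-type auxiliary polynomials $\{q_j\}$ inside the Duval-Levenberg framework of Lemma~\ref{keylemma}. The inductive step there depends delicately on the polynomial convexity of $K\cup L_n\cup Z_{n+1}$, which is obtained via Kallin's lemma together with Lemma~\ref{Stout-lemma}; the auxiliary polynomials must be chosen so that this polynomial convexity, and the disjointness of the half-planes $f_n(K)$ and $f_n(L_n\cup Z_{n+1})$, are preserved at each step. Verifying that the two constructions can indeed be woven together is precisely the integration of methods that the paper alludes to when it announces the combination of the techniques of the present paper with those of~\cite{CGI} and~\cite{Izzo}.
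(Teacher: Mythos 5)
There is a genuine gap at the heart of your construction of $Y$. You assert that the mechanism of \cite{CGI} for forcing one-point Gleason parts and killing bounded point derivations consists of ``a second countable family of auxiliary polynomials $\{q_j\}$'' that can be folded into the Duval--Levenberg induction of Lemma~\ref{keylemma} as extra constraints on $\Re f_{n+1}$. That is not what \cite{CGI} does, and no such family of polynomials is known to exist. The \cite{CGI} argument is Cole's root-extension construction: one adjoins square roots of a countable dense family of invertible elements, passes to an infinite product $X_0\times\C^\omega$, iterates, and takes a direct limit; the resulting algebra has one-point parts and no nonzero bounded point derivations precisely because every member of a dense subset has a square root in the algebra (Cole's Lemma~1.1). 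This necessarily changes the underlying space (the fibers of the projection become Cantor sets) and requires an extra generator, hence an extra complex dimension. It cannot be simulated by deleting more open subsets of a fixed Cantor set $\Sigma\subset\C^4$, because any set $Y$ produced by Lemma~\ref{keylemma} only comes with the property that certain polynomials are zero-free on $\h Y$ --- i.e., dense invertibles --- and the paper itself shows (Theorems~\ref{ptderivation1}--\ref{ptderivation3}) that a Cantor set can have dense invertibles while possessing a large Gleason part and an abundance of nonzero bounded point derivations. So the property you need for $Y$ does not follow from any amount of care in choosing the $p_j$'s, and the step where you ``execute the strengthened induction'' is exactly the unproved hard part.

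The paper's route is different in a way that matters: it first produces a Cantor set $Y\subset\C^3$ with $\hh Y\neq\emptyset$ and dense invertibles (Theorem~\ref{Cantorset-k}), and then applies a general theorem (Theorem~\ref{general-no-derivations}) implementing Cole's construction over $Y$, realizing the direct-limit algebra as $P(X)$ for a set $X\subset\C^{4}$; $X$ is a Cantor set because it fibers over $Y$ with Cantor-set fibers and no isolated points. Your closing descent argument (largest perfect subset, contractivity of restriction, pulling back derivations) is fine as far as it goes, but it cannot rescue the construction because the set $Y$ with trivial parts and no derivations was never actually produced. To repair the proof you would need to replace the ``auxiliary polynomials'' step with the root-tower construction, which forces you up to $\C^4$ from a $\C^3$ example rather than staying inside the original $\Sigma\subset\C^4$.
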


\begin{theorem}\label{ptderivation1}
For each integer $N\geq 3$, there exists a Cantor set $X$ in $\CN$ such that
\item{\rm(i)} $\hrk {N-2}X\setminus X$ has positive $2N$-dimensional measure
\item{\rm(ii)} $P(X)$ has dense invertibles
\item{\rm(iii)} there is a set $P\subset \hrk {N-2}X\setminus X$ of positive 
$2$-dimensional Hausdorff measure contained in a single Gleason part for $P(X)$ 
\item{\rm(iv)} at each point of $P$ there is a nonzero bounded point derivation for $P(X)$.
\end{theorem}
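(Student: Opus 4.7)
The plan is to prove Theorem~\ref{ptderivation1} by mimicking the proof of Theorem~\ref{Cantorset-k}, but replacing the one-point set used there by a more elaborate polynomially convex set $K$ that already carries the desired positive measure, Gleason part, and point derivation properties. Concretely, I would first invoke from \cite{Izzo} a compact, polynomially convex set $K\subset\CN$ with the following features: $K$ has positive $2N$-dimensional Lebesgue measure; there is a set $P\subset K$ of positive $2$-dimensional Hausdorff measure contained in a single Gleason part for $P(K)$; at each point of $P$ there is a nonzero bounded point derivation for $P(K)$; and the polynomials that are zero-free on $K$ form a dense subset of the polynomials under the topology of uniform convergence on compact subsets of $\CN$. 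The first three properties are exactly the content of the corresponding examples in \cite{Izzo}, and the last can be arranged as part of the same construction since dense invertibility is already built into the relevant examples there.

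Next, by a minor modification of the proof of Theorem~\ref{CantorLhull}, I would produce a Cantor set $\Sigma\subset\CN$ disjoint from $K$ such that $\hrk{N-1}\Sigma\supset K$; situating $\Sigma$ in a thin spherical shell around a sphere enclosing $K$ suffices. The density property of $K$ then yields that the polynomials zero-free on $K$ are dense in $P(\Sigma)$. Theorem~\ref{Duval-dense}, applied with $k=N-1$, produces a compact $Y\subset\Sigma$ with $\hrk{N-2}Y\supset K$ and with $P(Y)$ having dense invertibles. Take $X$ to be the largest perfect subset of $Y$. Then $X$ is a Cantor set, being a perfect subset of the totally disconnected set $\Sigma$; Lemma~\ref{reductiontop} gives $\hhrk{N-2}X\supset\hhrk{N-2}Y\supset K$, which establishes (i); and the dense invertibility of $P(X)$ follows from that of $P(Y)$ just as in the proof of Theorem~\ref{Cantorset-k}, giving (ii).

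For (iii) and (iv), since $K\subset\h X$, each $k\in K$ defines a character $\phi_k$ of $P(X)$, and continuous extension to $\h X$ followed by restriction to $K$ gives a norm-decreasing algebra homomorphism $\pi\colon P(X)\to P(K)$. Because $\|f\|_X\geq\|\pi(f)\|_K$ for every $f\in P(X)$, the Gleason pseudometric on $K$ induced by $P(K)$ dominates the one induced by $P(X)$, so $P$ lies in a single Gleason part for $P(X)$, proving (iii). Moreover, any nonzero bounded point derivation $D$ on $P(K)$ at a point $k\in P$ pulls back via $\pi$ to a bounded linear functional $D\circ\pi$ on $P(X)$, which is a point derivation at $\phi_k$; it is nonzero because a nonzero bounded point derivation on $P(K)$ must be nonzero on some polynomial, and polynomials lie in the image of $\pi$, which gives (iv). The main obstacle is the second step: quantitatively combining the Cantor set construction of Theorem~\ref{CantorLhull} with the specific set $K$ from \cite{Izzo} so as to secure simultaneously the inclusion $\hrk{N-1}\Sigma\supset K$ and the density of polynomials zero-free on $K$ within $P(\Sigma)$; once these two ingredients coexist, everything else is a straightforward application of the machinery developed earlier in the paper.
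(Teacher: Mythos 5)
Your proposal is correct and follows essentially the same route as the paper, which itself only sketches the argument: use the Cantor set of Theorem~\ref{CantorLhull}, feed the polynomially convex set $K$ from the relevant example of \cite{Izzo} (with its measure, Gleason part, and point derivation properties and the density of zero-free polynomials) into Theorem~\ref{Duval-dense} with $k=N-1$, pass to the largest perfect subset via Lemma~\ref{reductiontop}, and transfer parts and derivations through the restriction homomorphism $P(X)\to P(K)$. Note that no modification of Theorem~\ref{CantorLhull} is needed in your second step, since after rescaling $K$ into the open unit ball the inclusion $\hrk{N-1}\Sigma\supset\oB\supset K$ is already supplied by that theorem.
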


In the case $N\geq 4$ we can obtain a larger Gleason part and more bounded point derivations, but at the expense of giving up assurance that $\h X\setminus X$ has positive $2N$-dimensional measure.

\begin{theorem}\label{ptderivation2}
For each integer $N\geq 4$, there exists a Cantor set $X$ in $\CN$ such that
\item{\rm(i)} $P(X)$ has dense invertibles
\item{\rm(ii)} there is a set $P\subset \h X\setminus X$ of positive 
$2(N-1)$-dimensional Hausdorff measure contained in a single Gleason part for $P(X)$ 
\item{\rm(iii)} at each point of $P$ the space of bounded point derivations for $P(X)$ has dimension at least~$N-1$.
\end{theorem}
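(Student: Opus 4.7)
The plan is to combine the Cantor-set machinery developed earlier in the paper with the techniques of \cite{Izzo} (and \cite{CGI}) for producing hulls that have dense invertibles yet still contain large Gleason parts and many bounded point derivations. The target structure is to embed a piece of a complex submanifold $V\subset\CN$ of complex dimension $N-1$ inside $\h X\setminus X$: the complex-tangential partial derivatives along $V$ will supply the $N-1$ linearly independent bounded point derivations at each point of a suitable set $P\subset V$ of positive $2(N-1)$-dimensional Hausdorff measure, and $V$ being connected will place all of $P$ in a single Gleason part. Dense invertibility of $P(X)$ (and hence absence of ordinary analytic \emph{discs} in the one-variable sense) will be achieved exactly as in the proof of Theorem~\ref{Cantorset-dense-invert}.

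The first and main step is to enhance Theorem~\ref{CantorLhull}: I want to produce, for each $N\geq 4$, a Cantor set $\Sigma\subset\CN$ together with a polynomially convex compact set $K\subset \hk 2 \Sigma$ such that $K$ contains a complex submanifold $V$ of complex dimension $N-1$ with positive $2(N-1)$-dimensional Hausdorff measure, and such that the polynomials that are zero-free on $K$ are dense in $P(\Sigma)$. The construction will follow the shrinking-caps outline of Lemma~\ref{Wermeranalogue}, but each step must be controlled not merely by Lemma~\ref{hullofcap} (which yields $(N-1)$-rational containment) but by Lemma~\ref{foundation} together with Kallin's lemma (as in the inductive use in the proof of Lemma~\ref{keylemma}), so that what is captured at the limit is the \emph{2-polynomial} hull rather than the larger $(N-1)$-rational hull. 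The hypothesis $N\geq 4$ provides enough codimension to house a complex $(N-1)$-submanifold inside a spherical shell while still carrying out the Cantor-set shrinking.

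Once such $\Sigma$ and $K$ are in hand, apply Theorem~\ref{Duval-dense} with $k=2$ to obtain a compact $X_0\subset \Sigma$ with $\hr{X_0}\supset K$ (hence $\h{X_0}\supset V$) and with $P(X_0)$ having dense invertibles. I then verify, following the argument of \cite{Izzo}, that the inclusion $V\subset \h{X_0}$ yields $N-1$ bounded point derivations and a nontrivial Gleason part: since every $f\in P(X_0)$ extends analytically to a function on $V$ (identified with a subset of the maximal ideal space), the complex-tangential partial derivatives of the extension at a point $v\in V$ are bounded functionals on $P(X_0)$ by Cauchy's estimate inside $V$, and they are linearly independent because the coordinate functions restricted to $V$ separate complex-tangent directions. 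A Borel subset $P\subset V$ of positive $2(N-1)$-dimensional Hausdorff measure on which these Cauchy estimates are uniform then lies in a single Gleason part by the standard integration-along-a-chain-of-discs argument in $V$. Finally, pass to the largest perfect subset $X$ of $X_0$ by means of Lemma~\ref{reductiontop}: then $X$ is a Cantor set (being a perfect subset of $\Sigma$), $\h X\setminus X\supset \h{X_0}\setminus X_0\supset V\setminus X_0$, and the dense-invertibles property together with the Gleason-part and point-derivation statements descend from $P(X_0)$ to $P(X)$ by the same elementary restriction argument used in the proof of Theorem~\ref{Cantorset-k}.

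The main obstacle is the first step: the direct analogue of Theorem~\ref{CantorLhull} yields only $(N-1)$-rational containment, whereas here the 2-polynomial hull must be large. Forcing the 2-polynomial hull of a Cantor set in $\CN$ to contain a full complex $(N-1)$-submanifold requires, at each stage of the inductive caps construction, a delicate placement of the caps together with polynomial-convexity checks via Lemma~\ref{Stout-lemma} and Kallin's lemma, so that the key inclusion $\hk 2{K_n}\supset K$ propagates through the nested sequence. The Gleason-part/point-derivation transfer in the subsequent steps is essentially a repackaging of \cite{Izzo}, and the Cantor-set reduction at the end is routine via Lemma~\ref{reductiontop}.
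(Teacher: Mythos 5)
Your plan founders at its first step, and for a structural rather than a technical reason: the configuration you want to build cannot exist. Suppose $K\subset\hk 2{\Sigma}$ contains a complex submanifold $V$ of positive dimension and that the polynomials zero-free on $K$ are dense in $P(\Sigma)$. Pick $v_0\in V$ and a polynomial $p$ with $p(v_0)=0$ whose restriction to some analytic disc in $V$ through $v_0$ is not identically zero (a first-degree polynomial, perturbed if necessary, will do). By Rouch\'e's theorem every polynomial uniformly close to $p$ on that disc also vanishes on $V\subset K$, and closeness in $P(\Sigma)$ forces closeness on $\h{\Sigma}\supset K$ by the maximum principle; so no polynomial near $p$ is zero-free on $K$, contradicting the density hypothesis you need for Theorem~\ref{Duval-dense}. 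The same obstruction reappears downstream: dense invertibles in $P(X_0)$ already implies (as the paper notes, following Dales and Feinstein) that $\h{X_0}$ contains no analytic disc, so it certainly cannot contain your $(N-1)$-dimensional submanifold $V$. Conclusion (i) of the theorem is flatly incompatible with the mechanism you propose for (ii) and (iii).

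The point of Theorem~\ref{ptderivation2}, and of the results of \cite{Izzo} it refines, is precisely that the large Gleason part and the point derivations arise \emph{without} any embedded analytic structure: they come from measure-theoretic (Cauchy-transform type) arguments attached to a hull of positive volume, not from complex-tangential differentiation along a submanifold. The paper's route is: first prove the rational-hull statement, Theorem~\ref{ptderivation3}, in $\C^{N-1}$ (using the Cantor set of Theorem~\ref{CantorLhull} together with Theorem~\ref{rat-Duval-dense}, following the proof of \cite[Theorem~1.3]{Izzo}), which produces a Cantor set $Y$ and a set $P$ of positive $2(N-1)$-dimensional measure in a single Gleason part for $R(Y)$ with $(N-1)$-dimensional spaces of bounded point derivations; then pass from rational hulls in $\C^{N-1}$ to polynomial hulls in $\C^{N}$ by the standard graph construction indicated at the end of Section~2 of \cite{Izzo}. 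That added dimension is exactly why the statement lives in $\C^N$ with $N\geq 4$ but only promises $2(N-1)$-dimensional measure and $N-1$ derivations. Your proposal would need to be rebuilt around this rational-to-polynomial transfer rather than around an embedded complex manifold.
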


For rational hulls we have the following.

\begin{theorem}\label{ptderivation3}
For each integer $N\geq 3$, there exists a Cantor set $X$ in $\CN$ such that 
\item{\rm(i)} $R(X)$ has dense invertibles
\item{\rm(ii)} there is a set $P\subset \hrk {N-2}X\setminus X$ of positive $2N$-dimensional measure contained in a single Gleason part for $R(X)$ 
\item{\rm(iii)} at each point of $P$ the space of bounded point derivations for $R(X)$ has dimension~$N$.
\end{theorem}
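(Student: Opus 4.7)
The proof parallels that of Theorem~\ref{Cantorset-k}, with Theorem~\ref{rat-Duval-dense} used in place of Theorem~\ref{Duval-dense} and with a richer target set $K$ replacing the single point used there. First I would invoke Theorem~\ref{CantorLhull} to obtain a Cantor set $\Sigma\subset\CN$ contained in a small spherical shell $\spshell$ whose $(N-1)$-rational hull contains $\oB$. Since the $k$-hulls satisfy $\hk{N-1}\Sigma\supset\hrk{N-1}\Sigma\supset\oB$, any compact rationally convex subset of $\oB$ is eligible to serve as the set $K$ in Theorem~\ref{rat-Duval-dense} with $k=N-1$. Building on the analytic-structure construction of \cite[Theorem~4.5]{Izzo}, I would then select a compact rationally convex $K\subset\oB$ together with a subset $P\subset\hrk{N-2}K$ of positive $2N$-dimensional measure such that (a) $P$ is contained in a single Gleason part for $R(K)$, (b) at each point of $P$ the space of bounded point derivations for $R(K)$ has dimension $N$, and (c) the set of polynomials with no zeros on $K$ is dense in $P(\Sigma)$. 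The natural realization of (a) and (b) is to take $P$ to be an open $2N$-cell contained in the interior (in $\CN$) of $K$, on which Gelfand extensions of elements of $R(K)$ are automatically holomorphic; the partial derivatives $\partial/\partial z_1,\ldots,\partial/\partial z_N$ then furnish $N$ independent bounded point derivations, and $P$ lies in a single Gleason part by the Cauchy estimates.

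Applying Theorem~\ref{rat-Duval-dense} with this $K$ produces a compact subset $X'\subset\Sigma$ with $\hrk{N-2}X'\supset K$ and $R(X')$ having dense invertibles. Since $\hrk{N-2}X'$ is $(N-2)$-rationally convex and contains $K$, it contains $\hrk{N-2}K\supset P$. Let $X$ be the largest perfect subset of $X'$; it is a Cantor set as a perfect subset of the Cantor set $\Sigma$. Lemma~\ref{reductiontop} gives $\hrk{N-2}X\setminus X\supset\hrk{N-2}X'\setminus X'\supset P$, where the disjointness $P\cap X=\emptyset$ comes from $X\subset\Sigma\subset\spshell$ together with $P\subset\oB$; dense invertibility of $R(X)$ follows from that of $R(X')$ by the same easy restriction argument used in the proof of Theorem~\ref{Cantorset-k}. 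The Gleason-part and point-derivation properties transfer through the restriction homomorphism $\rho\colon R(X)\to R(K)$, which is contractive because $K\subset\hr X$: Gleason distances in $R(X)$ are dominated by those in $R(K)$, so $P$ remains in a single Gleason part for $R(X)$; each bounded point derivation $D$ on $R(K)$ at $p\in P$ pulls back to the bounded point derivation $D\circ\rho$ on $R(X)$ at $p$, with independence preserved by evaluation on $z_1,\ldots,z_N$. Since every bounded point derivation on $R(X)$ at $p$ is determined by its values on the $N$ coordinate generators, the dimension at each point of $P$ is exactly $N$.

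The principal obstacle is the simultaneous arrangement of conditions (a)--(c) for $K$. Condition (c) pushes $K$ toward being thin, so that polynomials can be perturbed to be zero-free on $K$, while the positive-measure requirement on $P\subset\hrk{N-2}K=K$ (using rational convexity of $K$) forces $K$ itself to have nonempty $2N$-dimensional interior. Resolving this tension requires the choice of $K$ to be delicately interleaved with the inductive shell construction of Lemma~\ref{Wermeranalogue}, adapting the perturbation techniques of \cite{Izzo} to the Cantor-set framework of Section~\ref{Cantor-set-section}, so that the resulting Cantor set $\Sigma$ admits a countable dense subfamily of $P(\Sigma)$ consisting of polynomials zero-free on $K$.
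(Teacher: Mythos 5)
Your overall architecture is the paper's: take the Cantor set $\Sigma$ of Theorem~\ref{CantorLhull}, feed a suitable rationally convex $K\subset\oB\subset\hk{N-1}\Sigma$ into Theorem~\ref{rat-Duval-dense} with $k=N-1$, pass to the largest perfect subset via Lemma~\ref{reductiontop}, and transfer the Gleason part and point derivations through the contractive restriction $R(X)\to R(K)$. The paper indeed proves the theorem ``in essentially the same manner as \cite[Theorem~1.3]{Izzo}, with \cite[Theorem~4.5]{Izzo} replaced by Theorem~\ref{rat-Duval-dense}.'' But your concrete realization of the set $K$ --- the heart of conditions (ii) and (iii) --- is wrong, and the way you propose to repair it points in the wrong direction.

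If $P$ is an open $2N$-cell contained in the interior of $K$, then the hypothesis of Theorem~\ref{rat-Duval-dense} fails: there is certainly a polynomial $p$ vanishing at an interior point $z_0$ of $K$, and restricting to a complex line through $z_0$ on which $p\not\equiv 0$ and applying Hurwitz/Rouch\'e on a disc contained in $K$ shows that every polynomial uniformly close to $p$ on $K$ also has a zero on $K$; hence the zero-free polynomials cannot be dense in $P(\Sigma)$. So your ``natural realization'' of (a) and (b) is incompatible with (c), not merely in tension with it. Moreover, your diagnosis that positive $2N$-dimensional measure of $P\subset K$ ``forces $K$ itself to have nonempty $2N$-dimensional interior'' is false --- positive measure does not imply nonempty interior --- and the resolution has nothing to do with interleaving the choice of $K$ with the shell construction of Lemma~\ref{Wermeranalogue} (which concerns $\Sigma$, not $K$). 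The correct $K$, taken verbatim from \cite{Izzo}, is a higher-dimensional Swiss cheese: a closed ball in $\oB$ minus a countable union of small open balls, chosen so that $K$ is rationally convex with positive measure and \emph{empty} interior, the polynomials zero-free on $K$ are dense, and an annihilating-measure/Cauchy-transform estimate produces, at each point of a positive-measure subset $P$ of $K$, bounded point derivations for $R(K)$ in all $N$ coordinate directions, with $P$ in a single Gleason part. With that $K$ in hand, the rest of your argument (the pullback of derivations, the upper bound $\leq N$ from the coordinate generators, and the Gleason-distance comparison) goes through as you describe.
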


Using the Cantor set of Theorem~\ref{CantorLhull} as in the proof of Theorem~\ref{Cantorset-k}, one can prove
Theorems~\ref{ptderivation1} and~\ref{ptderivation3} in essentially the same manner as \cite[Theorems~1.1 and~1.3]{Izzo}, with \cite[Theorems~4.1 and~4.5]{Izzo} replaced by Theorems~\ref{Duval-dense} and~\ref{rat-Duval-dense}.  The details are left to the reader.
Theorem~\ref{ptderivation2} follows from
Theorem~\ref{ptderivation3} by a standard method for translating 
statements about rational hulls in $\C^N$ into statements about polynomial hulls in $\C^{N+1}$.  See the end of Section~2 of \cite{Izzo}.

To establish Theorem~\ref{no-derivations} on the existence of a Cantor set $X$ with polynomial hull 
such that $P(X)$ has no nontrivial Gleason parts and no nonzero bounded point derivations, we will prove the following general result using the methods in \cite{CGI}.

\bthm\label{general-no-derivations}
Let $Y$ be a compact set in $\CN$ such that $\h Y \setminus Y$ is nonempty and $P(Y)$ has dense invertibles.  Then there exists a compact set $X$ in $\C^{N+1}$ such that,
letting $\pi$ denote the restriction to $\h X$ of the projection $\C^{N+1} \to \CN$ onto the first $N$ coordinates, the following conditions hold:
\item {\rm (i)} $\pi (X) = Y$
\item {\rm (ii)} $\pi (\h X \setminus X) = \h Y \setminus Y$
\item{\rm (iii)} $P(X)$ has dense invertibles
\item {\rm (iv)} every point of $\h X$ is a one-point Gleason part for $P(X)$ and there are no nonzero bounded point derivations for $P(X)$
\item {\rm (v)} each fiber $\pi^{-1} (z)$ for $z \in \h Y$ is a Cantor set.
\ethm

Before proving Theorem~\ref{general-no-derivations} we note that Theorem~\ref{no-derivations} follows as a special case.

\bpf[Proof of Theorem~\ref{no-derivations}]
By Theorem~\ref{Cantorset-k}, there is a Cantor set $Y$ in $\C^3$ such that $\h Y \setminus Y$ is nonempty and $P(Y)$ has dense invertibles.  Let $X$ be the set in $\C^4$ obtained from $Y$ as in Theorem~\ref{general-no-derivations}.  Conditions~(i) and~(v) then give that $X$ is totally disconnected and has no isolated points, so $X$ is a Cantor set.  Conditions~(ii),~(iii) and~(iv) give the other properties required of $X$.
\epf

\bpf[Proof of Theorem~\ref{general-no-derivations}]
Much of the argument is a repetition of the proof of \cite[Theorem~1.1]{CGI}, so we omit some of the details.
Set 
$X_0=\h Y$ and $A_0=P(X_0)$.  Using an iterative procedure introduced by Cole \cite[Theorem~2.5]{Cole}, we will define a sequence of uniform algebras 
$\{A_m\}_{m=0}^\infty$.  First let $\sF_0=\{f_{0,n}\}_{n=1}^\infty$ be a countable dense set of invertible functions in $A_0$.  Let $p:X_0\times \C^\omega\rightarrow X_0$ and $p_n:X_0\times \C^\omega \rightarrow \C$ denote the projections given by $p\bigl(x,(y_k)_{k=1}^\infty\bigr)=x$ and $p_n\bigl(x,(y_k)_{k=1}^\infty\bigr)=y_n$.  Define $X_1\subset X_0\times \C^\omega$ by
$$X_1=\{z\in X_0\times \C^\omega: p_n^2(z)= (f_{0,n}\circ p)(z) \hbox{\ for all } n=1, 2, \ldots\},$$
and let $A_1$ be the uniform algebra on $X_1$ generated by 
the functions $\{p_n\}_{n=1}^\infty$.
Since $p_n^2=f_{0,n}\circ p$ on $X_1$, the functions $f_{0,n}\circ p$ belong to $A_1$.  Identifying each function $f\in A_0$ with $f\circ p$, we can regard $A_0$ as a subalgebra of $A_1$.  Then $p_n^2=f_{0,n}$, so we will denote $p_n$ by $\sqrt{f_{0,n}}$.  
As in \cite{CGI}, there is a countable dense subset 
$\sF_1=\{f_{1,1}, f_{1,2}, \ldots\}$ of $A_1$ that contains the set 
$\{\sqrt{f_{0,n}}\}_{n=1}^\infty$ and consists exclusively of invertible elements of $A_1$ each a polynomial in the members of $\{\sqrt{f_{0,n}}\}_{n=1}^\infty$.
Iterating this construction we obtain a sequence of uniform algebras $\{A_m\}_{m=0}^\infty$ on compact metrizable spaces $\{X_m\}_{m=0}^\infty$ and for each $m$ a countable dense set $\sF_m=\{f_{m,1}, f_{m,2}, \ldots\}$ of invertible functions in $A_m$.  Each $A_m$ can be regarded as a subalgebra of $A_{m+1}$.  
Each function in $\sF_{m+1}$ is a polynomial in the members of 
$\{\sqrt{f_{m,n}}\}_{n=1}^\infty$.  In addition, each $\sqrt{f_{m,n}}$ lies in $\sF_{m+1}$.

We now take the direct limit of the system of uniform algebras $\{A_m\}$ to obtain a uniform algebra $A_\omega$ on some compact metrizable space $X_\omega$.  If we regard each $A_n$ as a subset of $A_\omega$ in the natural way, and set $\sF=\bigcup \sF_m$, then $\sF$ is a dense set of invertibles in $A_\omega$, and every member of $\sF$ has a square root in $\sF$.  It follows (by \cite[Lemma~1.1]{Cole}) that every Gleason part for $A_\omega$ consists of a single point and that there are no nonzero bounded point derivations on $A_\omega$.

By \cite[Theorem~2.5]{Cole}, the maximal ideal space of $A_\omega$ is $X_\omega$, and there is a surjective map $\tpi:X_\omega\rightarrow X_0$ that sends the Shilov boundary for $A_\omega$ into the Shilov boundary for $A_0=P(\h Y)$.  
As in \cite{CGI} there is
a function $b$ such that the functions $z_1\circ \tpi,\ldots, z_N\circ \tpi, b$ generate $A_\omega$ as a uniform algebra.   Setting $f_1= z_1\circ \tpi,\ldots f_N=z+N\circ \tpi, f_{N+1}=b$, and setting 
$X=\{ (f_1(x),\ldots , f_{N+1}(x)): x\in \tpi^{-1}(Y)\}$, we have that $P(X)$ is isomorphic as a uniform algebra to $A_\omega$.  Consequently,
$\h X=\{ (f_1(x),\ldots , f_{N+1}(x)): x\in X_\omega\}$, and~(i) and~(ii) hold.  That conditions (iii) and (iv) hold follows from the corresponding statement for $A_\omega$.

It remains to establish condition (v).  We have the inverse system of compact spaces
\[ \cdots \stackrel{\tau_{m+2}}{\longrightarrow} X_{m+1} \stackrel{\tau_{m+1}}{\longrightarrow} X_{m} \stackrel{\tau_{m}}\longrightarrow \cdots \stackrel
{\tau_2}{\longrightarrow} X_1
\stackrel{\tau_1}{\longrightarrow} X_0,
\]
where 
$$X_{m+1}=\{z\in X_m\times \C^\omega: p_n^2(z)= (f_{m,n}\circ p)(z) \hbox{\ for all } n=1, 2, \ldots\},$$
and $\tau_{m+1}(x, (y_k))_{k=1}^\infty=x$.  
The space $X_\omega$ is the inverse limit of this inverse system:
$$X_\omega=\{(z_k)_{k=0}^\infty\in \prod_{k=0}^\infty X_k: \tau_{m+1}(z_{m+1})=z_m  \hbox{\ for all } m=0, 1, 2, \ldots\}.$$
Because each function $f_{m,n}$ is zero-free, each fiber of each $\tau_m$ is a countable product of two-point spaces and hence is a Cantor set.  
Condition (v) is equivalent to the statement that each fiber of $\tpi$ is a Cantor set.  Furthermore, letting $\pi_m: \prod\limits_{k=0}^\infty X_k \rightarrow X_m$ denote the usual projection, the map $\tpi$ is the restriction of $\pi_0$ to $X_\omega$.

Let $x_0\in X_0$ be arbitrary.  Because $X_\omega$ is compact and metrizable, to show that $\tpi^{-1}(x_0)$ is a Cantor set, it suffices to show that $\tpi^{-1}(x_0)$ is totally disconnected and has no isolated points.  

Consider an arbitrary connected subset $C$ of $\tpi^{-1}(x_0)$.  We show by induction that $\pi_m(C)$ is a singleton for each $m$, and hence $C$ itself is a singleton.  First note that of course $\pi_0(C)=\tpi(C)=\{x_0\}$.  Now assume that $\pi_m(C)$ is a singleton $\{x_m\}$.  Then $\pi_{m+1}(C)$ is contained in the set $\tau_{m+1}^{-1}(x_m)$.  Since each fiber of $\tau_{m+1}$ is a Cantor set and $\pi_{m+1}(C)$ is connected, we conclude that $\pi_{m+1}(C)$ is a singleton, as desired.

To show that $\tpi^{-1}(x_0)$ has no isolated points, consider a point $a=(a_k)_{k=0}^\infty$ in $\tpi^{-1}(x_0)$ and a neighborhood $U$ of $a$ in $\prod X_k$ having the form $U= U_1\times \cdots \times U_m \times X_{m+1} \times X_{m+2} \times \cdots$ with each $U_k$ open in $X_k$.  We are to show that there exists a point $b\neq a$ in $\tpi^{-1}(x_0) \cap U$.  Set $b_k=a_k$ for $k=0,\ldots, m$.  Choose a point $b_{m+1}$ in $\tau_{m+1}^{-1}(b_m)$ distinct from $a_{m+1}$. Because each $\tau_k$ is surjective, there exist points $b_k$, $k=m+2, m+3, \ldots$, such that setting $b=(b_k)_{k=0}^\infty$, the point $b$ is in $X_\omega$.  Of course $b$ is distinct from $a$ and lies in $\tpi^{-1}(x_0)\cap U$, as desired.
\epf




\section {Spaces with hulls that contain no analytic discs}\label{spaces-no-discs}

 In this section we prove the results stated in the introduction about arcs, curves, and general spaces with hulls without analytic discs in the following order:  
 Theorem~\ref {generalspace}, 
 Theorems~\ref{maintheorem} and~\ref{one-d-gen}, Corollaries~\ref{maincorollary} and~\ref{one-d-special}, Theorem~\ref {arc}.
We denote the maximal ideal space of a uniform algebra $A$ by $\mm_A$.

\begin {proof}  [Proof of Theorem~\ref {generalspace}]      
Let $J$ be a Cantor set contained in $K$.   By Theorem~\ref{Cantorset}, there exists a uniform algebra $B$ on $J$ such that $\mm_B\bs J$ 
is nonempty but $\mm_B$ contains no analytic discs.  Let $A$ be the uniform algebra on $K$ defined by
$A= \{f\in C(K): f|J\in B\}$.

Each of $K$ and $\mm_B$ can be regarded as subsets of $\mm_A$ in standard ways.  Then $\mm_B$ is the $A$-convex hull of $J$ in $\mm_A$ \cite[Theorem~II.6.1]{Gamelin}, and it follows that $K \cap \mm_B=J$.

Let $\Sigma=K \cup \mm_B$.  We claim that $\Sigma=\mm_A$.  To see this, let $\widehat B$ denote the uniform algebra on $\mm_B$ obtained from $B$ via the Gelfand transform, and let $\widetilde A=\{f\in C(\Sigma):f|\mm_B\in \widehat B\}$.
Then by \cite[Theorem~4]{Bear}, $\Sigma=\mm_{\widetilde A}$.  Since the map $\widetilde A\to A$ given by restriction $(f \mapsto f|K)$ is an isometric isomorphism, this gives that $\Sigma= \mm_A$.

We conclude that if $\mm_A$ contains an analytic disc, then either $\mm_B$ or $\mm_A \bs \mm_B = K \bs J$ must contain an analytic disc.  But $\mm_B$ contains no analytic discs by our choice of $B$, and $K \bs J$ contains no analytic discs because the real-valued functions in $A$ separate points on $K \bs J$.

Finally note that $\mm_A \bs K = \mm_B \bs J \ne \emptyset$.
\end{proof}

\begin {proof}   [Proof of Theorems~\ref{maintheorem}  and~\ref{one-d-gen}]      
As in the proof of Theorem~\ref {generalspace}, let $J$ be a Cantor set in $K$, let $B$ be a uniform algebra on $J$ such that $\mm_B \bs J$ is nonempty but $\mm_B$ contains no analytic discs, and let $A=\{ f \in C(K) : f | J \in B \}$. Note that by Theorem~\ref{Cantorset}, the uniform
algebra $ B$ can be chosen so as to be generated by three functions $f_1, f_2, f_3$.

Extend each of $f_1, f_2, f_3$ to continuous complex-valued functions $\tf_1, \tf_2, \tf_3$ on $K$.  Let $x_1 \cd x_n$ denote the real coordinate functions 
on $\R^n$.  Choose a continuous real-valued function $\rho$ on $K$ whose zero set is precisely $J$. 
Then the $n+4$ functions $\tf_1, \tf_2, \tf_3, \rho, \rho x_1 \cd \rho x_n$ generate the uniform algebra $A$ by \cite[Lemma~3.8]{ISW} .

Let $F:K\to \C^{n+4}$ be the mapping whose components are the functions  
$\tf_1, \tf_2, \tf_3, \rho, \rho x_1 \cd \rho x_n$, and 
let $X=F(K)$.  Then $P(X)$ is isomophic to $A$ as a uniform algebra and $\mm_{P(X)}$ can be identified with $\h X$.
Since the proof of Theorem~\ref{generalspace} shows that $\mm_A \bs K$ is nonempty while $\mm_A$ contains no analytic disc,
Theorem~\ref{maintheorem} follows.

An alternative approach which avoids the argument in the proof of Theorem~\ref{generalspace} is to apply \cite[Proposition 3.1(i)]{ISW} 
to show directly that, with $G:J\to \C^3$ given by $G(x)=\bigl(f_1(x), f_2(x),  f_3(x)\bigr)$, we get
\[ \h X \bs X = \Bigl(\widehat{G(J)} \bs G(J)\Bigr) \times \{0\}^{n+1} \]
Since $G(J)$ is the Cantor set of Corollary~\ref{Cantorset}, this also yields the result.  Details are left to the reader.

To establish Theorem~\ref{one-d-gen} it suffices to show that the uniform algebra $A$ can be taken to have dense invertibles when $K$ has topological dimension at most $1$, and, in view of Theorem~\ref{Cantorset-dense-invert} this is so by the following lemma.
\end{proof}

\begin{lemma}\label{dense-invert-lemma}
Let $K$ be a compact metrizable space of topological dimension $1$, and let $J$ be a compact subset of $K$.  Let $B$ be a uniform algebra on $J$ with dense invertibles, and let $A$ be the uniform algebra on $K$ defined by
$A=\{ f \in C(K) : f | J \in B \}$.  Then $A$ has dense invertibles.
\end{lemma}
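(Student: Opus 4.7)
Given $f \in A$ and $\epsilon > 0$, my plan is to produce an invertible $F \in A$ with $\|F - f\|_K < \epsilon$ in three stages. First, I would use the density of invertibles in $B$ to choose an invertible $h \in B$ with $\|h - f\|_J < \epsilon/3$, and then extend $h - f|J$ to a continuous function $\phi$ on $K$ by Tietze's theorem, post-composing with the radial retraction of $\mathbb{C}$ onto the closed disk of radius $\|h - f\|_J$ so that $\|\phi\|_K \le \|h - f\|_J$. Setting $f_1 := f + \phi$ yields $f_1 \in A$ with $f_1|J = h$ and $\|f_1 - f\|_K < \epsilon/3$. The problem reduces to approximating $f_1$ by an invertible element of $A$; the key feature now is that $f_1|J = h$ is bounded below by $\delta := \min_J |h| > 0$.

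Next I would invoke the classical fact that, since $\dim K \le 1$, the uniform algebra $C(K)$ has dense invertibles. Choose $F_0 \in C(K)$ invertible with $\|F_0 - f_1\|_K < \sigma$ for a small $\sigma > 0$ to be determined. Provided $\sigma < \delta/4$, one has $|F_0(x)| \ge 3\delta/4$ for $x \in J$, so the ratio $\psi := h/(F_0|J) \in C(J)$ is well-defined, nowhere zero, and satisfies $\|\psi - 1\|_J \le \|h - F_0\|_J / (3\delta/4) < 1/3$. A sup-norm-preserving Tietze extension of $\psi - 1$ (again via radial truncation) yields $\tilde v \in C(K)$ with $\|\tilde v\|_K < 1/3$, so $\psi' := 1 + \tilde v$ is nowhere zero on $K$ and $\psi'|J = \psi$.

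Finally, set $F := F_0 \cdot \psi'$. Then $F$ is invertible in $C(K)$ as a product of invertibles, and $F|J = F_0|J \cdot \psi = h \in B$, so $F \in A$. The identity $F - f_1 = F_0(\psi' - 1) + (F_0 - f_1)$ yields
\[
\|F - f_1\|_K \le (\|f_1\|_K + \sigma)\cdot\|\psi' - 1\|_K + \sigma,
\]
which can be made less than $\epsilon/3$ by choosing $\sigma$ small enough in terms of $\delta$, $\|f_1\|_K$, and $\epsilon$. Hence $\|F - f\|_K < \epsilon$, as required.

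The main obstacle is that the naive additive correction $F := F_0 + \tilde u$, with $\tilde u$ a Tietze extension of $h - F_0|J$, may destroy invertibility: since $f_1$ may vanish on $K \setminus J$, $\min_K |F_0|$ can be arbitrarily smaller than $\|\tilde u\|_K$ no matter how small $\sigma$ is chosen. The multiplicative correction $F = F_0 \psi'$ circumvents this because $\psi'$ is automatically invertible (being close to $1$), and only the restriction to $J$, where the uniform lower bound $\delta$ applies, enters the denominator in the definition of $\psi$.
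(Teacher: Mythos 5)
Your proof is correct, and while its first stage (replacing $f$ by $f_1 = f + \phi$ with $f_1|_J = h$ invertible in $B$ and $\|f_1 - f\|_K$ small) coincides with the paper's, the remainder takes a genuinely different route. The paper exhausts $K\setminus J$ by compact sets $K_1 \subset K_2 \subset \cdots$, applies the Vaserstein theorem (dimension at most $1$ implies $C(K_m)$ has dense invertibles) to each $K_m$, and runs an infinite induction producing additive corrections $\tilde\alpha_j$ vanishing on $J$, whose norms are dominated both by $\varepsilon/2^{j+1}$ and by $\min_{K_m}|\tilde h + \tilde\alpha_1 + \cdots + \tilde\alpha_m|/2^j$ for all $m<j$, so that the tail of the series cannot reintroduce zeros on any $K_m$. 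You instead invoke the same dimension-theoretic fact just once, for $C(K)$ itself, to get a global zero-free approximant $F_0$, and then restore the correct values on $J$ multiplicatively via $\psi'=1+\tilde v$ with $\|\psi'-1\|_K<1/3$. This is shorter and avoids the induction entirely; as you correctly observe, the multiplicative correction is what makes a one-shot argument work, since an additive correction of size roughly $\sigma$ cannot be absorbed when $\min_K|F_0|$ is much smaller than $\sigma$ (which happens when $f_1$ vanishes somewhere on $K\setminus J$), whereas a factor uniformly close to $1$ preserves zero-freeness unconditionally. The one point you should make explicit is that invertibility of $F$ in $A$, rather than merely in $C(K)$, also requires $(1/F)|_J = 1/h \in B$; this holds precisely because $h$ was chosen invertible in $B$, and is the same observation the paper records for its function $g$.
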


\bpf
Let $f\in A$ and $\vep>0$ be arbitrary.  We are to show that there is an invertible element $g$ of $A$ such that $\|f-g\|_K<\vep$.

Since $B$ has dense invertibles, there is a function $h$ on $J$ with $h$ invertible in $B$ and with $\|(f|J)-h\|_J<\vep/2$.  It suffices to show that $h$ has a continuous extension $g$ to $K$ with no zeros on $K\setminus J$ and such that $\|f-g\|_K<\vep$, for then $1/g$ is a continuous function on $K$ and $(1/g)|J$ is in $B$, and hence $1/g$ is in $A$.

By the Tietze extension theorem, the function $(f|J)-h$ has a continuous extension $\alpha$ to $K$ with $\|\alpha\|_K<\vep/2$.  Let $\th=f-\alpha$.  Then $\th$ is a continuous function on $K$ such that $\th|J=h$ and $\|f-\th\|_K=\|\alpha\|_K<\vep/2$.  We must now modify $\th$ to obtain a function with no zeros.

We can write $K\setminus J$ as an increasing union of compact sets $K_1\subset K_2\subset \cdots$.  Roughly, the desired modification $g$ of $\th$ will be obtained by successively adding small functions $\ta_1, \ta_2, \ldots$ to $\th$ in such a way that the addition of the $m^{\rm th}$ function eliminates zeros on $K_m$, and the addition of all the subsequent functions does not reintroduce zeros on $K_m$.  More precisely, we argue as follows.  

As a subset of the at most 1-dimensional space $K$, each $K_n$ has dimension at most $1$ \cite[Theorem~III 1]{HW}.  Consequently each of the algebras $C(K_n)$ has dense invertibles \cite[Theorem~7]{Vas}.  Thus we can choose a function $\alpha_1\in C(K_1)$ such that $\|\alpha_1\|_{K_1}<\vep/2^2$ and $(\th|K_1)+\alpha_1$ has no zeros on $K_1$.  By the Tietze extension theorem, we can extend $\alpha_1$ to a continuous function $\ta_1$ on $K$ that is identically zero on $J$ and satisfies $\|\ta_1\|_K<\vep/2^2$.

Now suppose for the purpose of induction that we have chosen functions $\ta_1,\ldots,\ta_n\in C(K)$ such that each $\ta_j$ is identically zero on $J$, the function $\th + \ta_1 + \cdots +\ta_n$ has no zeros on $K_n$, the norm $\|\ta_j\|_K$ is bounded above by $\vep/2^{j+1}$, and in addition, for each $j=2,\ldots, n$, the norm $\|\ta_j\|_K$ is bounded above by each of the numbers 
$$\min_{K_m} (\th + \ta_1 + \cdots + \ta_m)/ 2^j\quad {\rm for\ }m=1, \ldots, j-1.$$ 
Then in the same manner as was used to obtain the function $\ta_1$, we can get a function $\ta_{n+1}\in C(K)$ such that $\ta_{n+1}$ is identically zero on $J$, the function $\th + \ta_1+\cdots + \ta_{n+1}$ has no zeros on $K_{n+1}$, and the norm of $\|\ta_{n+1}\|_K$ is bounded above by $\vep/2^{n+2}$ and by each of the numbers 
$$\min_{K_m} (\th + \ta_1 + \cdots + \ta_m)/ 2^{n+1}\quad {\rm for\ } m=1,\ldots, n. $$
Thus the induction can proceed.

Now the series $\sum_{j=1}^\infty \ta_j$ converges uniformly to a continuous function on $K$ and $\|\sum_{j=1}^\infty \ta_j\|_K<\vep/2$.  Thus setting $g=\th +\sum_{j=1}^\infty \ta_j$, we have
\[ \|f-g\|_K\leq \|f-\th\| + \Bigl\|{\textstyle\sum\limits_{j=1}^\infty} \ta_j\Bigr\| <\vep.
\]
Also $g|J=\th|J=h$.  In addition, given a positive integer $m$,
because 
\[ \Bigl\|{\textstyle\sum\limits_{j=m+1}^\infty} \ta_j\Bigr\| \leq
{\textstyle\sum\limits_{j=m+1}^\infty} \|\ta_j\|< \min_{K_m} (\th + \ta_1 + \cdots \ta_m)/ 2^m,
\]
the function $g$ has no zeros on $K_m$.  Thus
$g$ has no zeros on $K\setminus J$.  We conclude that $g$ is the required invertible element of $A$.
\epf

\begin{proof} [Proof of Corollaries~\ref{maincorollary} and~\ref{one-d-special}]
Since every compact metrizable space of topological dimension $m$ can be embedded in $\R^{2m+1}$ \cite[Theorem~V 2]{HW}, 
Corollaries~\ref{maincorollary} and~\ref{one-d-special} are immediate consequences of Theorems~\ref{maintheorem} and~\ref{one-d-gen}, respectively.
\end {proof}

\begin{proof} [Proof of Theorem~\ref{arc}]

Let $J$ be the Cantor set in $\C^3$ given by Theorem~\ref{Cantorset-dense-invert}. By Antoine's theorem \cite{Antoine} (or see \cite{Whyburn} where a more general result is given), there is an arc $I$ in $\C^3$
containing $J$.  We may choose $I$ such that the end points of $I$ lie in $J$.  Then $I \bs J$ is topologically a countable union of open intervals $I_1, I_2, \ldots$. Denote the end points of $I_n$ in $I$ by $a_n$ and $b_n$, and let $s_n$ be a point in the open interval $I_n$.  
For each $n=1, 2 \cd$ choose a continuous real-valued function $\rho_n$ on $I$ such that $\rho_n$ is 
identically zero on $I \bs I_n$, $\rho_n(s_n) = 1/2^n$, and $\rho_n$ is strictly increasing on the interval $(a_n,s_n)$ and
strictly decreasing on the interval $(s_n,b_n)$.  Then the series $\sum ^\infty_{n=1} \rho_n$ defines a continuous real-valued 
function $\rho$ on $I$ whose zero set is $J$ and each of whose other level sets is finite.

Let $X$ denote the graph of $\rho$.  Then $X$ is an arc in $\C^4$ and applying \cite[Proposition 3.1(i)]{ISW} (taking the mapping $F$ there to be the last coordinate function $z_4$) shows that
\[ \h X = X \cup ( \widehat J \times \{0\}) \]
so $ \h X \bs X = ( \widehat J \setminus J) \times \{0\}$.
Thus the polynomial hull of $X$ is nontrivial but contains no analytic discs.  Furthermore, note that because each level set of $\rho$, other than the zero set $J$, is a finite set, the Bishop antisymmetric decomposition shows that $P(X)=\{f\in C(X): f|(J\times\{0\})\in P(J\times\{0\})\}$.  Thus $P(X)$ has dense invertibles by Lemma~\ref{dense-invert-lemma}.

The proof with ``arc'' replaced by ``simple closed curve'' is essentially the same.
\end{proof}

\begin{remark}
One can also prove, along the same lines as the proof of Theorems~\ref{maintheorem} and~\ref{one-d-gen}, results about the existence of large Gleason parts and nonzero bounded point derivations in hulls of arbitrary uncountable, compact metrizable spaces of finite topological dimension similar to Theorems~\ref{ptderivation1}--\ref{ptderivation3} for Cantor sets.  This is left to the interested reader.
\end{remark}

\end{document}